\documentclass[10pt]{amsart}

\usepackage{amsmath,amssymb,amsthm,amsfonts,mathrsfs}
\usepackage{fullpage}
\usepackage[shortlabels]{enumitem}
\usepackage[colorlinks=true,
linkcolor=blue,
anchorcolor=blue,
citecolor=red
]{hyperref}
\allowdisplaybreaks 
\newtheorem{theorem}{Theorem}[section]
\newtheorem{lemma}[theorem]{Lemma}

\newtheorem{proposition}[theorem]{Proposition}
\newtheorem{conjecture}[theorem]{Conjecture}
\theoremstyle{definition}

\theoremstyle{remark}
\newtheorem{remark}[theorem]{Remark}


\author{Pan Yan}
\address{Department of Mathematics, The University of Arizona, Tucson, AZ 85721, USA}
\email{panyan@arizona.edu}

\makeatletter
\@namedef{subjclassname@2020}{\textup{}2020 Mathematics Subject Classification}
\makeatother

\title{$L$-function for $\mathrm{Sp}(4)\times\mathrm{GL}(2)$ via a non-unique model}
\subjclass[2020]{Primary 11F70; Secondary 11F55, 22E50, 22E55}
\keywords{New Way method, non-unique model, generalized doubling integrals, periods}

\begin{document}

\begin{abstract}
In this paper we prove a conjecture of Ginzburg and Soudry on an integral representation for the $L$-function $L^S(s, \pi\times \tau)$ attached to a pair $(\pi, \tau)$ of irreducible automorphic cuspidal representations of $\mathrm{Sp}_4({\mathbb A})$ and $\mathrm{GL}_2({\mathbb A})$, which is derived from  the generalized doubling method of Cai, Friedberg, Ginzburg and Kaplan. We show that the integral unfolds to a non-unique model and analyze it using the New Way method of Piatetski-Shapiro and Rallis. Two applications are given. First, we relate the existence of the poles of $L^S(s,\pi\times\tau)$ to the non-vanishing of certain period integrals. Second, for certain family of cuspidal representations, we prove that $L^S(s, \pi\times \tau)$ is holomorphic.
\end{abstract}

\maketitle
\tableofcontents 

\section{Introduction}\label{section-introduction}

The theory of $L$-functions of automorphic forms or automorphic representations is a central topic in modern number theory  \cite{Langlands1970}. One fruitful way to study $L$-functions is through an integral formula, 
which is commonly referred to as an integral representation.
The usual Rankin-Selberg method \cite{Bump1989} involves a global integral which unfolds to a unique model such as the Whittaker model or the Bessel model, and it is the local uniqueness of the model which forces the global integral to be Eulerian;
see, for example, \cite{JacquetPiatetski-ShapiroShalika197901, JacquetPiatetski-ShapiroShalika197902, Furusawa1993, Piatetski-Shapiro1997, GinzburgRallisSoudry1998, File2013, Yan2024JRMS}.  
In 1987, Piatetski-Shapiro and Rallis \cite{Piatetski-ShapiroRallis1987Doubling} 
introduced a family of integrals which represent the standard $L$-functions for classical groups.
Their construction, now known as the doubling method, unfolded to an integral involving a global matrix coefficient on the classical group. More recently, the doubling construction has been vastly generalized by Cai, Friedberg, Ginzburg and Kaplan \cite{CaiFriedbergGinzburgKaplan2019} to represent the tensor product partial $L$-function $L^S(s, \pi\times \tau)$, where $\pi$ and $\tau$ are irreducible automorphic cuspidal representations on a classical group and a general linear group respectively. The new ingredients of the generalized doubling constructions in \cite{CaiFriedbergGinzburgKaplan2019}, are the use of the generalized Speh representations as inducing data for the Eisenstein series, and the use of a new model which generalizes the Whittaker model. The generalized doubling constructions, together with the Converse Theorem of Cogdell and Piatetski-Shapiro \cite{CogdellPiatetski-Shapiro1994, CogdellPiatetski-Shapiro1999}, can be applied to obtain global functoriality from $G({\mathbb A})$ to $\mathrm{GL}_N({\mathbb A})$, where $G$ is either a symplectic group or a split special orthogonal group or a split general spin group, and ${\mathbb A}$ is the ring of adeles of a number field $F$ (see \cite[Theorem 1]{CaiFriedbergKaplan2024}).

In \cite{GinzburgSoudry2020}, Ginzburg and Soudry proposed a relatively simple procedure to derive integrals from the (generalized) doubling method. 
Starting from the global (generalized) doubling integral, 
they used the process of global root exchange, and identities between Eisenstein series proved in \cite{GinzburgSoudry2021}, to derive and obtain a ``simpler'' integral. 
They gave several examples of the procedure, such as the Jacquet-Langlands \cite{JacquetLanglands1970} integral for the standard $L$-function attached to an irreducible automorphic cuspidal representation of $\mathrm{GL}_2(\mathbb{A})$, as well as the integrals of Piatetski-Shapiro and Rallis analyzed by the New Way method in \cite{Piatetski-ShapiroRallis1988}.

The New Way method was originally developed by Piatetski-Shapiro and Rallis in \cite{Piatetski-ShapiroRallis1988}, where they used non-unique models to obtain integral representations for the standard $L$-functions for $\mathrm{Sp}_{2n}$. We mention that several other constructions via a non-unique model for general linear groups and split orthogonal groups
were given in \cite{BumpFurusawaGinzburg1995}. The standard $L$-function for a quasi-split unitary group was treated in \cite{Qin2007}. 
Recently, Pollack and Shah adapted the New Way method to obtain the Spin $L$-functions for $\mathrm{GSp}_4$ \cite{PollackShah2017} and $\mathrm{GSp}_6$ \cite{PollackShah2018}. For the standard $L$-function for the exceptional group $G_2$ as well as its $\mathrm{GL}_1$ twist, we mention the works in \cite{GurevichSegal2015, Segal2017}. The New Way method also applies to the standard $L$-function for the $n$-fold metaplectic covering group $\mathrm{GL}^{(n)}_r(\mathbb{A})$ \cite{BumpFriedberg1999, Ginzburg2018}.

Ginzburg and Soudry showed how one can use the procedure they described in \cite{GinzburgSoudry2020} to construct a new global integral which unfolds to a non-unique model, and they conjectured that this new integral is Eulerian by the New Way method. 
To give more details about their construction, we introduce some notations. 
Let $\pi$ be an irreducible automorphic cuspidal representation of $\mathrm{Sp}_4(\mathbb{A})$, with $\varphi\in V_\pi$ a cusp form in the space of $\pi$.
Let $T_0$ denote a symmetric matrix in $\mathrm{GL}_2(F)$ and let
$T=J_2 T_0$ with $J_2=\left(\begin{smallmatrix} & 1\\
1& \end{smallmatrix}\right)$. Let $\chi_T$ be the quadratic character on $F^\times\backslash \mathbb{A}^\times$, given by $\chi_T(x)=(x, \det(T))$ where $(\cdot, \cdot)$ is the global Hilbert symbol. 
Consider the dual pair $(\mathrm{SO}_{T_0}, \mathrm{Sp}_4)$ inside $\mathrm{Sp}_8$.
The adelic group $\mathrm{SO}_{T_0}(\mathbb{A})\times \mathrm{Sp}_4(\mathbb{A})$ splits in $\widetilde{\mathrm{Sp}}_8(\mathbb{A})$, and we consider the restriction of the Weil representation $\omega_\psi=\omega_{\psi, 8}$ of 
$\widetilde{\mathrm{Sp}}_8(\mathbb{A})$,
corresponding to the character $\psi$, to the group 
$\mathrm{SO}_{T_0}(\mathbb{A})\times \mathrm{Sp}_4(\mathbb{A})$ 
under the splitting. We may realize the Weil representation in the Schwartz space $\mathcal{S}(\mathrm{Mat}_2(\mathbb{A}))$ (see \cite{Rallis1984, Weil1965}). For a Schwartz function $\Phi\in \mathcal{S}(\mathrm{Mat}_2(\mathbb{A}))$, let $\theta_{\psi}^\Phi $ be the corresponding theta series, defined in \eqref{Preliminaries-eq-theta-series-Sp8}. 
Let $N_{2,8}$ be the unipotent radical of the standard parabolic subgroup of $\mathrm{Sp}_8$ whose Levi subgroup is isomorphic to $\mathrm{GL}_2\times \mathrm{Sp}_4$. Note that $N_{2,8}$ has a structure of the Heisenberg group $\mathcal{H}_9$ in $9$ variables, under the map $\alpha_T:N_{2,8}\to \mathcal{H}_9$  which will be given in \eqref{eq-alpha_T}. 
Let $\tau$ be an irreducible unitary automorphic cuspidal representation of $\mathrm{GL}_2(\mathbb{A})$, and we denote by $\Delta(\tau\otimes \chi_T, 2)$ the generalized Speh representation of $\mathrm{GL}_4(\mathbb{A})$ attached to $\tau\otimes\chi_T$, defined by Jacquet \cite{Jacquet1984}. 
Given a section 
$$
f_{\Delta(\tau\otimes \chi_T, 2), s}\in \mathrm{Ind}_{P_8(\mathbb{A})}^{\mathrm{Sp}_8(\mathbb{A})}(\Delta(\tau\otimes \chi_T, 2)\vert \det \cdot \vert^s),
$$
where $P_8\subset \mathrm{Sp}_8$ is the Siegel parabolic subgroup, we form an Eisenstein series $E(\cdot, f_{\Delta(\tau\otimes \chi_T, 2), s})$. Here and thereafter, induction is normalized. The representation $\Delta(\tau\otimes \chi_T, 2)$ affords a unique model, denoted by $\mathcal{W}(\tau\otimes \chi_T, 2, \psi_{2T})$. This is the space of functions of the form $g\mapsto \Lambda(g\cdot \phi)$ as $\phi$ varies in the space of $\Delta(\tau\otimes \chi_T, 2)$, where $\Lambda(\phi)$ is given by the integral
\begin{equation*}
\Lambda(\phi)=\int_{U_{(2^2)}(F)\backslash U_{(2^2)}(\mathbb{A})} \phi(u)\psi_{2T}^{-1}(u)du,	
\end{equation*}
with $U_{(2^2)}=\left\{ \left(\begin{smallmatrix} I_2 & x \\ & I_2\end{smallmatrix}\right)\in \mathrm{GL}_4  \right\}$, and $\psi_{2T}\left(\left(\begin{smallmatrix} I_2 & x \\ & I_2\end{smallmatrix}\right)\right)=\psi\left( \mathrm{tr}(2Tx)\right)$.

Starting from the global generalized doubling integral 
in \cite{CaiFriedbergGinzburgKaplan2019}, Ginzburg and Soudry used global computations to derive the following integral
\begin{equation}
\begin{split}
\mathcal{Z}( \varphi,   \theta_{\psi}^\Phi, & E(\cdot, f_{\Delta(\tau\otimes \chi_T, 2), s})) =
\\ 
&\int\limits_{\mathrm{Sp}_4(F)\backslash \mathrm{Sp}_4(\mathbb{A})} \int\limits_{N_{2, 8}(F)\backslash N_{2, 8}(\mathbb{A})}  
  \varphi(h)     \theta_{\psi}^\Phi (\alpha_T(v)(1, h))E(v(1, h), f_{\Delta(\tau\otimes \chi_T, 2), s})dvdh.
\label{Introduction-eq-global-integral-0}
\end{split}
\end{equation}
Here, inside the Eisenstein series, the term $(1,h)$ stands for $\mathrm{diag}(I_2, h, I_2)$.
Moreover, they formulated a conjecture on the relation between the integral above and the tensor product partial $L$-function $L^S( s+\frac{1}{2}, \pi\times \tau)$, where $S$ is a finite set of places defined in Subsection~\ref{subsection-proof-of-main-result}. 

\begin{conjecture}(Ginzburg-Soudry, \cite[Section 4.2]{GinzburgSoudry2020})
The integral \eqref{Introduction-eq-global-integral-0} is Eulerian in the sense of the New Way method, and represents the partial $L$-function $L^S( s+\frac{1}{2}, \pi\times \tau)$.
\label{conjecture-Ginzburg-Soudry}
\end{conjecture}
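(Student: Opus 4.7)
The plan is to follow the two-stage template of the New Way method: first unfold the global integral \eqref{Introduction-eq-global-integral-0} on decomposable data to an Eulerian product against a non-unique local model of $\pi$, and then at each place outside $S$ compute the local integral explicitly on normalized spherical data, matching the answer with the local $L$-factor of $L^S(s+\tfrac{1}{2}, \pi\times \tau)$. For the global unfolding I would substitute the defining series for $E(\cdot, f_{\Delta(\tau\otimes \chi_T, 2), s})$, exchange the sum with the integrations over $N_{2, 8}(F)\backslash N_{2, 8}(\mathbb{A})$ and $\mathrm{Sp}_4(F)\backslash \mathrm{Sp}_4(\mathbb{A})$, and work out the double coset decomposition of $P_8(F)\backslash \mathrm{Sp}_8(F)$ against $(N_{2,8}\rtimes \mathrm{Sp}_4)(F)$. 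Cuspidality of $\varphi$ together with the equivariance of $\theta_\psi^\Phi$ under $\alpha_T$ should kill all but the open orbit; a sequence of root exchanges then trades the surviving unipotent integrations so that the Eisenstein series is replaced by a vector in the Whittaker model $\mathcal{W}(\tau\otimes \chi_T, 2, \psi_{2T})$ of the Speh inducing data, paired with $\varphi$ and $\theta_\psi^\Phi$ through a Fourier--Jacobi-type functional on $\pi$ that is not locally unique.

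At a place $v\notin S$ I would take $\varphi_v$ to be the normalized spherical vector in $\pi_v$, $W_{f,s,v}$ the unramified Whittaker function in $\mathcal{W}(\tau_v\otimes \chi_{T,v}, 2, \psi_{2T,v})$, and $\Phi_v$ the characteristic function of the standard lattice in $\mathrm{Mat}_2(F_v)$. An Iwasawa decomposition on $\mathrm{Sp}_4(F_v)$ should reduce the local integral to one over the Borel. Combining a Shintani--Casselman--Shalika type formula for the unramified Speh Whittaker function of $\Delta(\tau_v\otimes \chi_{T,v}, 2)$ with the explicit action of the Weil representation of $\mathrm{SO}_{T_0}(F_v)\times \mathrm{Sp}_4(F_v)$ on $\Phi_v$, the resulting sum over the torus should collapse to a geometric series in $q_v^{-s}$ whose value is the local $L$-factor $L(s+\tfrac{1}{2}, \pi_v\times \tau_v)$.

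The hardest step is the unramified local computation and the rigorous justification that non-uniqueness of the local model does not obstruct Eulerianity on decomposable data. Because the local functional is not a priori unique, Eulerianity cannot be deduced from an abstract branching statement; instead one has to show that on the specific spherical test vectors the integrand itself factorizes over the places. This will require careful matching of the roots exchanged in the global computation with their local images, tracking conjugations by $\alpha_T$ and by representatives of the open orbit, and then executing the delicate cancellation in the explicit Speh Whittaker formula. Verifying that the Piatetski-Shapiro--Rallis ``new way'' orthogonality---the vanishing of the non-Eulerian contributions to the unipotent integration on spherical data---holds in our setting is the central technical point around which the argument revolves.
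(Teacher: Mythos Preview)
Your global outline is broadly on track, but two points deserve correction.

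First, the unfolding in the paper does not use root exchanges. Those were the device Ginzburg and Soudry used to \emph{derive} the integral \eqref{Introduction-eq-global-integral-0} from the generalized doubling integral; once the integral is written down, the unfolding is a direct double-coset computation for $P_8(F)\backslash \mathrm{Sp}_8(F)/P_{2,8}(F)$ (three orbits, only the ``big'' one $\gamma_2$ survives by cuspidality and the character carried by the theta series), followed by unfolding the theta sum via the $\mathrm{GL}_2(F)$-orbits on $\mathrm{Mat}_2(F)$. The output is the Fourier coefficient $\varphi_{\psi,T}$ paired with a section in the $\mathcal{W}(\tau\otimes\chi_T,2,\psi_{2T})$-model; no further root exchange is needed.

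Second, and more importantly, your plan for the unramified computation misses the main idea. The paper does not attempt a direct Casselman--Shalika evaluation of the Speh Whittaker function followed by a torus sum identity; that route would be quite hard here. Instead, the key maneuver is to \emph{compare} the unramified local integral $\mathcal{Z}^*_\nu(l_T,s)$ with the unramified local integral of the generalized doubling construction on $\mathrm{Sp}_{16}$ from Cai--Friedberg--Ginzburg--Kaplan, which is already known to equal $L(s+\tfrac12,\pi_\nu\times\tau_\nu)$ times the matrix coefficient. One first replaces the matrix coefficient by $l_T(\pi_\nu(h)v_0)$ (a standard New Way step), then shows by a sequence of support arguments and conjugations that both local integrals reduce to the \emph{same} integral over $\mathrm{GL}_2(F)\cap\mathrm{Mat}_2(\mathcal{O}_F)$, namely
\[
\int l_T(\pi(\hat a)v_0)\,\lambda(f^*)(\hat a)\,|\det a|^{-5}\,da,
\]
for a certain unramified section $\lambda(f^*)$ of an induced representation on $\mathrm{Sp}_4$. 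This comparison, not an ab initio torus computation, is what makes the identity \eqref{eq-local-unramified-identity} accessible. Relatedly, your description of the New Way mechanism is slightly off: Eulerianity does not come from vanishing of non-Eulerian pieces on spherical data, but from the fact that the unramified identity holds for \emph{every} functional $l_T$ satisfying \eqref{eq-l_T}, which is exactly the input to the Piatetski-Shapiro--Rallis induction.
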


\subsection{Statement of main result}

The main purpose of this paper is to prove Conjecture~\ref{conjecture-Ginzburg-Soudry}. Our result can be formulated in terms of normalized Eisenstein series. Let 
$
    d_{\tau\otimes\chi_T}^{\mathrm{Sp}_8, S}(s)=\prod_{\nu\not\in S} d_{\tau_\nu\otimes\chi_T}^{\mathrm{Sp}_8 }(s),
$
where
\begin{equation*}
    d_{\tau_\nu\otimes\chi_T}^{\mathrm{Sp}_8}(s)=L (s+\frac{3}{2}, \tau_\nu\otimes\chi_T)L (2s+2, \chi_{\tau_\nu}) L (2s+1, \tau_\nu\otimes\chi_T, \mathrm{Sym}^2).
\end{equation*}
Here, $\mathrm{Sym}^2$ is the symmetric square representation of $\mathrm{GL}_2({\mathbb C})$ and $\chi_{\tau_\nu}$ is the central character of $\tau_\nu$. This is the normalizing factor outside $S$ for the Eisenstein series $E(\cdot,  f_{\Delta(\tau\otimes \chi_T, 2), s})$ (see \cite[(1.48)]{GinzburgSoudry2021}). Denote
\begin{equation*}
    E^{*, S}(g, f_{\Delta(\tau\otimes \chi_T, 2), s})=    d_{\tau\otimes\chi_T}^{\mathrm{Sp}_8, S}(s) E(g, f_{\Delta(\tau\otimes \chi_T, 2), s}).
\end{equation*}
This is the partially normalized Eisenstein series. 

\begin{theorem}[Theorem~\ref{thm-conjecture-holds-restate}]
Conjecture~\ref{conjecture-Ginzburg-Soudry} holds. That is,
given an irreducible automorphic cuspidal representation $\pi$ of $\mathrm{Sp}_4({\mathbb A})$, 
an irreducible unitary automorphic cuspidal representation $\tau$ of $\mathrm{GL}_2({\mathbb A})$, 
and a non-zero cusp form $\varphi\in V_\pi$ which corresponds to $\otimes_\nu \xi_\nu$ under the factorization $\pi\cong \otimes_\nu^\prime \pi_\nu$, there is a choice of a matrix $T=J_2 T_0\in \mathrm{GL}_2(F)$ where $T_0$ is symmetric, a section $f_{\Delta(\tau\otimes \chi_T, 2), s}\in \mathrm{Ind}_{P_8(\mathbb{A})}^{\mathrm{Sp}_8(\mathbb{A})}(\Delta(\tau\otimes \chi_T, 2)\vert \det \cdot \vert^s)$ and some $\Phi=\prod_{\nu}\Phi_\nu \in \mathcal{S}(\mathrm{Mat}_2({\mathbb A}))$ so that
\begin{equation}
\mathcal{Z}( \varphi,   \theta_{\psi}^\Phi   , E^{*, S}(\cdot, f_{\Delta(\tau\otimes \chi_T, 2), s})) = 
L^S(s+\frac{1}{2}, \pi\times \tau)  \cdot \mathcal{Z}_S(\varphi, \Phi, f^*_{\mathcal{W}(\tau\otimes \chi_{T}, 2, \psi_{2T}), s}) ,
\label{Introduction-eq-in-major-theorem}
\end{equation}
where 
$\mathcal{Z}_S(\varphi, \Phi, f^*_{\mathcal{W}(\tau\otimes \chi_{T}, 2, \psi_{2T}), s})$ 
is a meromorphic function in $s$ defined by \eqref{eq-zeta_omega}.
Moreover, for any $s_0\in \mathbb{C}$, the data can be chosen so that $\mathcal{Z}_S(\varphi, \Phi, f^*_{\mathcal{W}(\tau\otimes \chi_{T}, 2, \psi_{2T}), s})$ is holomorphic and
non-zero in a neighborhood of $s=s_0$. 
\label{thm-conjecture-holds}
\end{theorem}

\begin{remark}
In contrast to all previously known constructions, the integral \eqref{Introduction-eq-global-integral-0} proposed by Ginzburg and Soudry is the first example of a New Way type integral representation for the $L$-function for $G\times {\mathrm{GL}}_k$ where $k>1$. Recently, the above construction has been generalized to ${\mathrm{Sp}}_{2n}\times {\mathrm{GL}}_k$ in \cite{JinYan2023}.
\end{remark}

\subsection{Method of proof}

Our proof of Theorem~\ref{thm-conjecture-holds} uses the New Way method developed by Piatetski-Shapiro and Rallis in \cite{Piatetski-ShapiroRallis1988}. First, we perform the global unfolding computation.

\begin{theorem}[\text{Theorem~\ref{theorem-global-unfolding}}]
When $\mathrm{Re}(s)\gg 0$, the integral  $\mathcal{Z}( \varphi,   \theta_{\psi}^\Phi   , E^{*, S}(\cdot, f_{\Delta(\tau\otimes \chi_T, 2), s}))$ unfolds to
\begin{equation*}
\begin{split}
 \int\limits_{ N_4(\mathbb{A})  \backslash  \mathrm{Sp}_4(\mathbb{A})}     \int\limits_{N_{2,8}^0(\mathbb{A})}      \varphi_{\psi, T}(h) \omega_\psi\left( \alpha_T(v)(1,h)\right) )\Phi(I_2 )     f^*_{\mathcal{W}(\tau\otimes \chi_T, 2, \psi_{2T}), s}(\gamma v(1, h) )      dv dh,
\end{split}
\end{equation*}
where $N_4$ is the unipotent radical of the standard Siegel parabolic subgroup of $\mathrm{Sp}_4$, $N_{2,8}^0$ is a certain subgroup of $N_{2,8}$, $\varphi_{\psi, T}$ is the Fourier coefficient given by
\begin{equation}
\label{Introduction-eq-Fourier-coefficient}
   \varphi_{\psi, T}(h):= \int\limits_{N_4(F)\backslash N_4({\mathbb A})}\varphi(nh)\psi_T(n)dn,
\end{equation}
with $\psi_T$ defined by  $\psi_T\left(\begin{smallmatrix} I_2 & z \\ & I_2\end{smallmatrix}\right)=\psi(\mathrm{tr}(Tz))$, $f^*_{\mathcal{W}(\tau\otimes \chi_T, 2, \psi_{2T}), s}$ is a section of the parabolic induction from $\mathcal{W}(\tau\otimes \chi_T, 2, \psi_{2T})$ obtained from $f_{\Delta(\tau\otimes \chi_T, 2), s}$, and $\gamma$ is a certain element of $\mathrm{Sp}_8(F)$.\label{Introduction-thm-global-unfolding}
\end{theorem}

We refer the reader to Section~\ref{section-main-theorems} for the precise definition of notations. We remark that, by a theorem of Li \cite{LiJian-Shu1992}, every automorphic cuspidal representation of $\mathrm{Sp}_4(\mathbb{A})$ possesses a non-zero Fourier coefficient $\varphi_{\psi, T}$ given in \eqref{Introduction-eq-Fourier-coefficient} for certain $T$ of maximal rank.

This naturally leads us to consider the local analogue of \eqref{Introduction-eq-Fourier-coefficient}. Let $\nu$ be a finite place of $F$ so that $(\pi_\nu, V_{\pi_{\nu}})$ is an irreducible admissible unramified representation of ${\mathrm{Sp}}_4(F_\nu)$. Let $l_T:V_{\pi_\nu}\to {\mathbb C}$ be a linear functional on $V_{\pi_\nu}$ satisfying 
\begin{equation}
\label{Introduction-eq-l_T}
l_{T}\left( {\pi_\nu} \left(\begin{smallmatrix}
I_2 &z\\
& I_2
\end{smallmatrix}\right) v \right)={\psi}_\nu^{-1}(\mathrm{tr}(Tz))l_{T}(v), \  \  \text{ for all } v\in V_{{\pi_\nu}}, z=J_2{}^tz J_2\in \mathrm{Mat}_2({F_\nu}).
\end{equation}
The space of these functionals is usually infinite-dimensional (see \cite{Piatetski-ShapiroRallis1988}). Hence, the model corresponding to $l_T$ is usually non-unique. This is the non-unique model alluded in the title of this paper.
Therefore, the usual methods of unramified computation, such as the one based on the Casselman-Shalika type formulas, will not work in our situation. Instead, we follow the method pioneered by Piatetski-Shapiro and Rallis, and carry out the local unramified computation that works for any linear functional $l_T$ satisfying \eqref{Introduction-eq-l_T}.

\begin{theorem}[\text{Theorem~\ref{theorem-unramified-computation}}]
Let $\nu\nmid 2, 3$ be a finite place of $F$, such that $\pi_\nu$ and $\tau_\nu$ are unramified. Let $v_0$ be a non-zero unramified vector in $V_{\pi_\nu}$ and let $l_{T}$ be a linear functional on $V_{\pi_\nu}$  satisfying \eqref{Introduction-eq-l_T}. Denote
\begin{equation*}
\begin{split}
    \mathcal{Z}^*_\nu(l_{T}, s) =  \int\limits_{N_4({F_\nu})\backslash \mathrm{Sp}_4({F_\nu})}   \int\limits_{N_{2,8}^0({F_\nu})}  l_{T}({\pi_\nu}(h)v_0)    \omega_{{\psi}}\left( \alpha_T(v)(1,h)\right) )\Phi^0_\nu(I_2 )
       f^*_{\mathcal{W}({\tau_\nu} \otimes\chi_{T}, 2, \psi_{2T}), s}( \gamma v (1, h) )    dv     dh.
\end{split}
\end{equation*}
The integral converges absolutely for $\mathrm{Re}(s)>x_0$, where $x_0>0$ and depends on $\pi_v$, $\tau_v$. 
For $\mathrm{Re}(s)$ sufficiently large,  we have
\begin{equation}
\begin{split}
 \mathcal{Z}^*_\nu(l_{T}, s)        = L(s+\frac{1}{2},  {\pi_\nu}\times {\tau_\nu}) \cdot l_{T}(v_0)  .
\label{Introduction-eq-local-unramified-identity}
\end{split}
\end{equation}
Here, $\Phi^0_\nu$ is the characteristic function of $\mathrm{Mat}_2(\mathcal{O}_{F_\nu})$ where $\mathcal{O}_{F_\nu}$ is the ring of integers of $F_\nu$, and $f^*_{\mathcal{W}({\tau} \otimes\chi_{T}, 2, \psi_{2T}), s}$ is the normalized unramified  section in $\mathrm{Ind}_{P_8({F})}^{\mathrm{Sp}_8({F})}(\mathcal{W}( {\tau_\nu}\otimes \chi_{T}, 2, \psi_{2T})\vert \det \vert^s)$.
\label{Introduction-thm-unramified}
\end{theorem}

\begin{remark}
We emphasize that the local unramified computation does not depend on the choice of the local model, and  \eqref{Introduction-eq-local-unramified-identity} holds for any linear functional $l_{T}$ on $V_{\pi_\nu}$ satisfying \eqref{Introduction-eq-l_T}.
\end{remark}

Finally,  we obtain \eqref{Introduction-eq-in-major-theorem} by an induction argument similar to \cite{Piatetski-ShapiroRallis1988}; see Theorem~\ref{theorem-global-1}.

\subsection{Applications}
Now we give two applications of Theorem~\ref{thm-conjecture-holds}.

The first application is on the relation between the existence of the poles of $L^S(s, \pi\times \tau)$ and the non-vanishing of certain period integrals. The location of the poles of $L^S(s, \pi\times \tau)$ is known (for example, from the integral representations of \cite{GinzburgJiangRallisSoudry2011} or \cite{CaiFriedbergGinzburgKaplan2019}). More precisely, in the right-half plane $\mathrm{Re}(s)>\frac{1}{2}$, $L^S(s,\pi\times \tau)$ is holomorphic for $\mathrm{Re}(s)>\frac{3}{2}$, and admits at most a simple pole at $s=\frac{3}{2}$ and at $s=1$. In \cite{JiangLiuZhang2013}, the location of possible poles of the fully normalized Eisenstein series $E^*(\cdot, f_{\Delta(\tau\otimes \chi_T, 2), s})$, at $\mathrm{Re}(s)\ge 0$, is determined. Here, 
\begin{equation}
\label{eq-normalized-Eisenstein}
    E^{*}(g, f_{\Delta(\tau\otimes \chi_T, 2), s})=    d_{\tau\otimes\chi_T}^{{\mathrm{Sp}}_8}(s) E(g, f_{\Delta(\tau\otimes \chi_T, 2), s}),
\end{equation}
where
\begin{equation*}
d_{\tau\otimes\chi_T}^{{\mathrm{Sp}}_8}(s)=\prod_{\nu} d_{\tau_\nu\otimes\chi_T}^{{\mathrm{Sp}}_8 }(s).
\end{equation*}
If $\tau$ is self-dual with trivial central character and $L(\frac{1}{2}, \tau\otimes\chi_T)\not=0$, 
then $E^*(\cdot , f_{\Delta(\tau\otimes \chi_T, 2), s})$ has a simple pole at $s=1$. Moreover, this pole is the right-most pole of $E^*(\cdot, f_{\Delta(\tau\otimes \chi_T, 2), s})$.
In this case, we denote by $\mathcal{R}(1, \Delta(\tau\otimes\chi_T, 2))$ the space generated by the residues of the fully normalized Eisenstein series $E^*(\cdot , f_{\Delta(\tau\otimes \chi_T, 2), s})$ at $s=1$, as the section $f_{\Delta(\tau\otimes \chi_T, 2), s}$ varies. An element $R\in \mathcal{R}(1, \Delta(\tau\otimes\chi_T, 2))$ is an automorphic form on $\mathrm{Sp}_8(\mathbb{A})$. Similarly, if $\tau$ is self-dual, and $L(s, \tau\otimes \chi_T, \mathrm{Sym}^2)$ has a pole at $s=1$, then $E^*(\cdot, f_{\Delta(\tau\otimes \chi_T, 2), s})$ has a simple pole at $\frac{1}{2}$. We denote by $\mathcal{R}(\frac{1}{2}, \Delta(\tau\otimes\chi_T, 2))$ the space generated by the residues of the fully normalized Eisenstein series $E^*(\cdot , f_{\Delta(\tau\otimes \chi_T, 2), s})$ at $s=\frac{1}{2}$, as the section $f_{\Delta(\tau\otimes \chi_T, 2), s}$ varies. We have the following theorem.

\begin{theorem} 
\label{thm-nonvanishing-of-period}
Let $\pi$ be an irreducible automorphic cuspidal representation of ${\mathrm{Sp}}_4({\mathbb{A}})$, and let $\tau$ be
an irreducible unitary automorphic cuspidal self-dual representation of ${\mathrm{GL}}_2({\mathbb{A}})$, 
Let $T=J_2 T_0$ with $T_0\in \mathrm{Sym}_2(F)\cap \mathrm{GL}_2(F)$, and assume that $\varphi\in V_\pi$ such that 
$\varphi_{\psi, T}\not=0$.
Then we have the following.
\begin{enumerate}

    \item[(i)] If 
$
    \mathrm{Res}_{s=\frac{3}{2}} L^S(s, \pi\times \tau)\not=0,
$
then there exist 
a Schwartz function $\Phi\in \mathcal{S}(\mathrm{Mat}_2({\mathbb{A}}))$, and a residue $R\in \mathcal{R}(1,\Delta(\tau\otimes \chi_T, 2))$, such that the period integral
\begin{equation*}
\int\limits_{\mathrm{Sp}_4(F)\backslash \mathrm{Sp}_4(\mathbb{A})} \int\limits_{N_{2, 8}(F)\backslash N_{2, 8}(\mathbb{A})}   \varphi(h)     \theta_{\psi}^\Phi (\alpha_T(v)(1, h)) R(v(1,h)) dvdh 
\end{equation*}
is non-zero.

    \item[(ii)] If 
$
    \mathrm{Res}_{s=1} L^S(s, \pi\times \tau)\not=0,
$
then there exist 
a Schwartz function $\Phi\in \mathcal{S}(\mathrm{Mat}_2({\mathbb{A}}))$, and a residue $R\in \mathcal{R}(\frac{1}{2},\Delta(\tau\otimes \chi_T, 2))$, such that the period integral
\begin{equation*}
\int\limits_{\mathrm{Sp}_4(F)\backslash \mathrm{Sp}_4(\mathbb{A})} \int\limits_{N_{2, 8}(F)\backslash N_{2, 8}(\mathbb{A})}   \varphi(h)     \theta_{\psi}^\Phi (\alpha_T(v)(1, h)) R(v(1,h)) dvdh 
\end{equation*}
is non-zero.
\end{enumerate}
\end{theorem}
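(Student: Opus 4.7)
My plan is to deduce all four parts from Theorem~\ref{thm-conjecture-holds}, together with the pole structure of the fully normalized Siegel Eisenstein series $E^*(\cdot, f_{\Delta(\tau\otimes \chi_T, 2), s})$ recalled just before the theorem statement (and established by Jiang--Liu--Zhang). The bridge is the identity of Theorem~\ref{thm-conjecture-holds},
\begin{equation*}
L^S(s+\tfrac{1}{2}, \pi\times\tau)\cdot \mathcal{Z}_S(\varphi, \Phi, f^*_{\mathcal{W}(\tau\otimes\chi_T, 2, \psi_{2T}), s}) = \mathcal{Z}(\varphi, \theta^\Phi_\psi, E^{*,S}(\cdot, f_{\Delta(\tau\otimes\chi_T, 2), s})),
\end{equation*}
combined with the factorization $E^{*,S}(\cdot, s) = E^*(\cdot, s)/\prod_{\nu\in S} d^{\mathrm{Sp}_8}_{\tau_\nu\otimes\chi_T}(s)$. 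The finite product in the denominator consists of local $L$-values whose holomorphy and non-vanishing at $s=1$ and $s=\tfrac{1}{2}$ follow from unitarity of $\tau$; I can therefore transfer the pole behavior of $E^*$ directly to that of $\mathcal{Z}(\varphi, \theta^\Phi_\psi, E^{*,S})$.

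For (i), since $E^*$ is holomorphic for $\mathrm{Re}(s) > 1$ with at most a simple pole at $s=1$, the same holds for $\mathcal{Z}(\varphi, \theta^\Phi_\psi, E^{*,S})$. For any $s_0$ with $\mathrm{Re}(s_0)\ge 1$, I invoke the last clause of Theorem~\ref{thm-conjecture-holds} to choose data so that $\mathcal{Z}_S$ is holomorphic and non-zero at $s_0$; dividing the identity by $\mathcal{Z}_S(s_0)$ transfers the same bound to $L^S(s_0+\tfrac{1}{2}, \pi\times\tau)$, which reparametrizes to (i). Part (iii) follows by the identical argument at $s_0 = \tfrac{1}{2}$, using the at-most-simple pole of $E^*$ there.

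For parts (ii) and (iv), I suppose the relevant residue of $L^S$ is non-zero and again choose data so that $\mathcal{Z}_S$ is holomorphic and non-vanishing at the matching point $s_0 = 1$ (for (ii)) or $s_0 = \tfrac{1}{2}$ (for (iv)). Taking the residue at $s_0$ on each side of the identity and moving the residue inside the integral then gives
\begin{equation*}
\mathrm{Res}_{s=s_0}\mathcal{Z}(\varphi, \theta^\Phi_\psi, E^{*,S}(\cdot, s)) = \int\limits_{\mathrm{Sp}_4(F)\backslash \mathrm{Sp}_4(\mathbb{A})}\int\limits_{N_{2,8}(F)\backslash N_{2,8}(\mathbb{A})}\varphi(h)\,\theta^\Phi_\psi(\alpha_T(v)(1,h))\,R_S(v(1,h))\,dv\,dh,
\end{equation*}
where $R_S = \mathrm{Res}_{s=s_0}E^{*,S}(\cdot, s)$ is a non-zero scalar multiple of an element $R\in \mathcal{R}(s_0, \Delta(\tau\otimes\chi_T, 2))$. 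By the hypothesis on $L^S$ and the choice of $\mathcal{Z}_S$, the left-hand side is non-zero; hence the period integral on the right is non-zero. This yields (ii) and (iv).

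The main technical obstacle will be rigorously justifying the interchange of residue and integration. I plan to do this by writing $E^{*,S}(g,s) = R_S(g)/(s-s_0) + (\text{holomorphic near } s_0)$ and verifying absolute convergence of the $N_{2,8}(\mathbb{A})\times \mathrm{Sp}_4(\mathbb{A})$ integral against the residue automorphic form $R_S$; this should follow from cuspidality of $\varphi$, moderate growth of $R_S$, and the Schwartz decay of $\theta^\Phi_\psi$. A secondary point to verify is the holomorphy and non-vanishing of the local normalizing factors $d^{\mathrm{Sp}_8}_{\tau_\nu\otimes\chi_T}(s)$ for $\nu\in S$ at $s=1$ and $s=\tfrac{1}{2}$, which reduces to standard bounds on poles and zeros of the standard, exterior-square and symmetric-square local $L$-factors of the unitary representation $\tau_\nu\otimes\chi_T$ of $\mathrm{GL}_2(F_\nu)$.
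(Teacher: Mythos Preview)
Your approach is essentially identical to the paper's: both deduce all four parts from the main identity of Theorem~\ref{thm-conjecture-holds}, the pole structure of $E^*$ from Jiang--Liu--Zhang, the freedom to choose data making $\mathcal{Z}_S$ nonzero, and non-vanishing of the finite product $\prod_{\nu\in S}d^{\mathrm{Sp}_8}_{\tau_\nu\otimes\chi_T}(s)$. One small refinement: for part (i) you need this non-vanishing on all of $\mathrm{Re}(s)\ge 1$, not merely at $s=1$ and $s=\tfrac12$; but the argument you sketch (at finite places the local $L$-factors are inverses of polynomials in $q^{-s}$, at archimedean places they are products of Gamma functions) in fact yields non-vanishing for \emph{every} $s$, which is exactly what the paper proves and uses.
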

 
As a second application, we have the following theorem on the holomorphy of $L^S(s,\pi\times\tau)$ for certain family of representations $\pi$ and $\tau$.

\begin{theorem}
\label{thm-holomorphy-of-L(s,pixtau)}
Let $\pi$ be an irreducible automorphic cuspidal representation of ${\mathrm{Sp}}_4({\mathbb{A}})$, and let $\tau$ be
an irreducible unitary automorphic cuspidal self-dual representation of ${\mathrm{GL}}_2({\mathbb{A}})$.
Let $T=J_2 T_0$ with $T_0\in \mathrm{Sym}_2(F)\cap \mathrm{GL}_2(F)$, and assume that $\varphi\in V_\pi$ such that
$\varphi_{\psi, T}\not=0$. 
Assume that  $L(\frac{1}{2}, \tau\otimes\chi_T)=0$ and the central character $\chi_\tau$ of $\tau$ is trivial. Then $L^S(s, \pi\times \tau)$ is holomorphic.
\end{theorem}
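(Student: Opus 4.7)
The plan is to read off the holomorphy of $L^S(s,\pi\times\tau)$ directly from the integral identity \eqref{Introduction-eq-in-major-theorem} supplied by Theorem~\ref{thm-conjecture-holds}. Writing that identity as
\[
L^S(s+\tfrac{1}{2},\pi\times\tau)\cdot\mathcal{Z}_S(\varphi,\Phi,f^*_{\mathcal{W}(\tau\otimes\chi_T,2,\psi_{2T}),s})
=\mathcal{Z}(\varphi,\theta_\psi^\Phi,E^{*,S}(\cdot,f_{\Delta(\tau\otimes\chi_T,2),s})),
\]
the pointwise non-vanishing statement at the end of Theorem~\ref{thm-conjecture-holds} allows us, for any prescribed $s_0\in\mathbb{C}$, to choose data so that $\mathcal{Z}_S$ is non-zero at $s_0-\tfrac{1}{2}$. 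It therefore suffices to show that the right-hand side is an entire function of $s$ for every choice of data, which in turn reduces to proving that the partially normalized Eisenstein series $E^{*,S}$ itself is entire under the hypotheses of the theorem.

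I would first work with the fully normalized $E^*$ of \eqref{eq-normalized-Eisenstein}. According to the result of Jiang-Liu-Zhang \cite{JiangLiuZhang2013} recalled in the introduction, the only candidates for poles of $E^*$ in the closed right half-plane are $s=1$ and $s=\tfrac{1}{2}$: the former is excluded by the hypothesis $L(\tfrac{1}{2},\tau\otimes\chi_T)=0$, and the latter requires $L(s,\tau\otimes\chi_T,\mathrm{Sym}^2)$ to have a pole at $s=1$. To rule this out, observe that the Rankin-Selberg factorization
\[
L(s,\tau\times\tilde\tau)=L(s,\tau\otimes\chi_T,\mathrm{Sym}^2)\,L(s,\tau\otimes\chi_T,\mathrm{Ext}^2),
\]
valid since $\chi_T^2=1$ and $\tilde\tau=\tau$, realizes the unique simple pole of $L(s,\tau\times\tilde\tau)$ at $s=1$ as a pole of exactly one of the two right-hand factors. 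Our hypothesis places that pole inside $L(s,\tau\otimes\chi_T,\mathrm{Ext}^2)$, so $L(s,\tau\otimes\chi_T,\mathrm{Sym}^2)$ must be holomorphic at $s=1$, and no pole of $E^*$ occurs at $s=\tfrac{1}{2}$. The standard functional equation of the normalized Siegel Eisenstein series attached to Speh inducing data then extends holomorphy from $\mathrm{Re}(s)\ge 0$ to $\mathrm{Re}(s)<0$, so $E^*$ is entire.

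Finally, $E^{*,S}$ differs from $E^*$ only by the factor $\prod_{\nu\in S}\bigl(d^{\mathrm{Sp}_8}_{\tau_\nu\otimes\chi_T}(s)\bigr)^{-1}$, and each local $L$-factor in $d^{\mathrm{Sp}_8}_{\tau_\nu\otimes\chi_T}(s)$ is a reciprocal of a polynomial in $q_\nu^{-s}$, hence nowhere vanishing. Thus $E^{*,S}$ is entire, the integral $\mathcal{Z}(\varphi,\theta_\psi^\Phi,E^{*,S})$ is entire, and solving the displayed identity for $L^S(s_0,\pi\times\tau)$ at an arbitrary $s_0$ (with $\mathcal{Z}_S\ne 0$ arranged by the appropriate choice of data) shows that $L^S(s,\pi\times\tau)$ is entire. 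The main technical obstacle I expect lies in the functional-equation step: \cite{JiangLiuZhang2013} pins down poles only in the closed right half-plane, so extending entirety of $E^*$ to all of $\mathbb{C}$ requires verifying that the normalized intertwining operator on the Speh inducing data has no zeros in $\mathrm{Re}(s)>0$ that would reflect to poles in $\mathrm{Re}(s)<0$. Under our self-duality assumptions this should reduce to a careful analysis of the normalizing factor $d^{\mathrm{Sp}_8}_{\tau\otimes\chi_T}(s)$, but it is the only step requiring real care.
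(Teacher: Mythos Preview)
Your approach is essentially the same as the paper's: both arguments feed the holomorphy of the normalized Eisenstein series into the integral identity and then divide by the (pointwise non-vanishing) $\mathcal{Z}_S$ to read off holomorphy of $L^S$. Two remarks are in order.

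First, the ``main technical obstacle'' you flag is not actually an obstacle. The result of Jiang--Liu--Zhang, as recorded in the paper (Theorem~\ref{theorem-JiangLiuZhang2013-Poles}), already asserts that the \emph{only} possible poles of $E^*$ in the entire plane lie at $s\in\{-1,-\tfrac{1}{2},\tfrac{1}{2},1\}$, and case~(ii) states outright that under the hypotheses $L(\tfrac{1}{2},\tau\otimes\chi_T)=0$ and $L(s,\tau\otimes\chi_T,\mathrm{Ext}^2)$ having a pole at $s=1$, the normalized Eisenstein series $E^*$ is holomorphic (everywhere). So you do not need your Rankin--Selberg dichotomy argument to exclude $s=\tfrac{1}{2}$, nor any separate functional-equation analysis for $\mathrm{Re}(s)<0$; you can simply quote case~(ii). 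The paper's proof does exactly this.

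Second, your justification that $\prod_{\nu\in S}d^{\mathrm{Sp}_8}_{\tau_\nu\otimes\chi_T}(s)$ is nowhere vanishing is incomplete: $S$ contains the archimedean places, where the local $L$-factors are products of shifted Gamma functions, not reciprocals of polynomials in $q_\nu^{-s}$. The conclusion survives, since Gamma functions are nowhere zero, but you should treat the archimedean and non-archimedean cases separately (as the paper does in the proof of Theorem~\ref{thm-nonvanishing-of-period}(i)).
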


\subsection{Structure of this paper}
The rest of this paper is organized as follows. In Section~\ref{section-Preliminaries}, we define the groups, and recall relevant theta seires and Eisenstein series, as well as the generalized Speh representations. In Section~\ref{section-main-theorems}, we discuss the global integral of Ginzburg and Soudry and prove Theorem~\ref{Introduction-thm-global-unfolding}.  In Section~\ref{section-Unramified computation}, we compute the local integral at an unramified place and prove Theorem~\ref{Introduction-thm-unramified}. In Section~\ref{section-Ramified computation}, we discuss the non-vanishing of the local integrals at finite ramified places and the archimedean places.  Finally, in Section~\ref{section-proofs-main-results}, we discuss the general technique of Piatetski-Shapiro and Rallis to analyze a non-unique model, and prove the main theorems.

\section{Preliminaries} \label{section-Preliminaries}

\subsection{General notations}
We start with some general notations. Let $F$ be a number field and $\mathbb{A}=\mathbb{A}_F$ the ring of adeles of $F$. Throughout the paper, all algebraic groups are defined over $F$.
For a positive integer $n$, we let $J_n$ denote the $n\times n$ matrix which has ones on the antidiagonal and zeros everywhere else. Let $\mathrm{Sp}_{2n}$ be the symplectic group of rank $n$, regarded as an algebraic group over $F$. Over an $F$-algebra $R$, it is realized in matrices as
\begin{equation*}
\mathrm{Sp}_{2n}(R)=\left\{g \in \mathrm{GL}_{2 n}(R) : {}^t g \left(\begin{smallmatrix}
 & J_n\\
 -J_n &
\end{smallmatrix}\right) g=\left(\begin{smallmatrix}
 & J_n\\
 -J_n &
\end{smallmatrix}\right)\right\}.
\end{equation*} 
For each $0\le r \le n$, we let $P_{r,2n}$ be the standard parabolic subgroup of $\mathrm{Sp}_{2n}$ with the standard Levi decomposition $P_{r,2n}=M_{r,2n}\ltimes N_{r,2n}$, where the Levi part is $M_{r,2n}\cong \mathrm{GL}_r\times \mathrm{Sp}_{2n-2r}$. When $r=0$, it is understood that $M_{0,2n}=\mathrm{Sp}_{2n}$ and $N_{0,2n}=\{I_{2n}\}$.
Let $P_{2n}=P_{n,2n}$,  $M_{2n}=M_{n,2n}$, and $N_{2n}=N_{n,2n}$. 
Then $P_{2n}$ is the standard Siegel parabolic subgroup of $\mathrm{Sp}_{2n}$.
Let $\mathrm{Mat}_{m\times n}$ denote the additive group of all matrices of size $m\times n$ and let $\mathrm{Mat}_n=\mathrm{Mat}_{n\times n}$. Over an $F$-algebra $R$, denote
\begin{equation*}
\mathrm{Mat}_{n}^{0}(R)=\left\{A \in \mathrm{Mat}_{n}(R) : {}^t A J_{n}=J_{n} A\right\}.
\end{equation*}
Then  
\begin{equation*}
M_{2n}(R)=\left\{ \hat{g}=\left(\begin{smallmatrix}
 g & \\
  & J_n {}^t g^{-1} J_n
\end{smallmatrix}\right) :g\in \mathrm{GL}_{n}(R)\right\}, \,  N_{2n}(R)=\left\{ \left(\begin{smallmatrix}
 I_n & z \\
  & I_n
\end{smallmatrix}\right) :z\in \mathrm{Mat}_n^0 (R) \right\}.
\end{equation*}

\subsection{The Weil representation}
We fix a nontrivial additive character $\psi:F\backslash \mathbb{A}\to \mathbb{C}^\times$. Let $T_0\in \mathrm{GL}_2(F)$ be a symmetric matrix. To simplify our calculation we may assume that $T_0=\mathrm{diag}(t_1, t_2)$ is diagonal, but this assumption is not essential (see \cite[Section 4.1]{GinzburgSoudry2020}). Let $T=J_2 T_0=\left(\begin{smallmatrix}
0& t_2\\
t_1 &0
\end{smallmatrix}\right)$. We denote by $\chi_T$ the quadratic character of $F^\times\backslash \mathbb{A}^\times$ given by 
$
    \chi_T(x)=(x, \det(T)),
$
where $(\cdot, \cdot)$ is the global Hilbert symbol. This is the quadratic character corresponding to $\mathrm{disc}(T_0)$.
Let $\mathrm{SO}_{T_0}$ be the special orthogonal group defined over $F$, corresponding to $T_0$. The matrix realization of $\mathrm{SO}_{T_0}$ is given by
$
    \mathrm{SO}_{T_0}=\left\{g\in {\mathrm{SL}}_2: {}^t g T_0 g =T_0\right\}.
$

We consider the following Heisenberg group $\mathcal{H}_9$ in $9$ variables, realized as $\mathrm{Mat}_2\times \mathrm{Mat}_2\times \mathrm{Mat}_1$. This is the group of
elements $(X, Y, z)$ where $X, Y\in \mathrm{Mat}_2$, $z\in \mathrm{Mat}_1$, with multiplication given by
\begin{equation*}
    (X_1, Y_1, z_1)(X_2, Y_2, z_2)=(X_1+X_2, Y_1+Y_2, z_1+z_2+\mathrm{tr}(T(X_1J_2{}^t Y_2 J_2- Y_1J_2{}^t X_2 J_2))).
\end{equation*}
We also consider the unipotent group $N_{2,8}$,  which consists of matrices of the form
\begin{equation}
    v(x, y, z)=\left(\begin{smallmatrix}
I_2 & x & y & z\\
& I_2 & & y^* \\
& &I_2 & x^*\\
& & &I_2
\end{smallmatrix}\right)
\label{Preliminaries-eq-N_2,8}
\end{equation}
where $y^*=J_2 {}^t y J_2, x^*=-J_2 {}^t x J_2$, and $z-xy^*\in \mathrm{Mat}_2^0$. Then $N_{2,8}$ is isomorphic to the Heisenberg group $\mathcal{H}_9$ by the map 
\begin{equation}
    \alpha_T \left(\begin{smallmatrix}
I_2 & x & y & z\\
& I_2 & & y^* \\
& &I_2 & x^*\\
& & &I_2
\end{smallmatrix}\right)=(x, y, \mathrm{tr}(Tz)).
\label{eq-alpha_T}
\end{equation}
We embed $\mathrm{SO}_{T_0}\times \mathrm{Sp}_4$ inside $\mathrm{Sp}_8$ via $(m,h)\mapsto \mathrm{diag}(m, h, m^*)$ where $m^*=J_2{}^t m^{-1} J_2$. 
To simplify our notation, we re-denote $(m, h)=\mathrm{diag}(m, h, m^*)$.

Let $\widetilde{\mathrm{Sp}}_8(\mathbb{A})$ be the metaplectic double cover of $\mathrm{Sp}_8(\mathbb{A})$.
We consider the dual pair $(\mathrm{SO}_{T_0}, \mathrm{Sp}_4)$ inside $\mathrm{Sp}_8$.
The adelic group $\mathrm{SO}_{T_0}(\mathbb{A})\times \mathrm{Sp}_4(\mathbb{A})$ splits in $\widetilde{\mathrm{Sp}}_8(\mathbb{A})$, and we consider the restriction of the Weil representation $\omega_\psi=\omega_{\psi, 8}$ of 
$\widetilde{\mathrm{Sp}}_8(\mathbb{A})$,
corresponding to the character $\psi$, to the group 
$\mathrm{SO}_{T_0}(\mathbb{A})\times \mathrm{Sp}_4(\mathbb{A})$ 
under the splitting.  We realize the Weil representation in the Schwartz space $\mathcal{S}(\mathrm{Mat}_2(\mathbb{A}))$.  We have the following formulas (see for example \cite[pp. 8]{GinzburgRallisSoudry2011}) for the Weil representations
\begin{equation}
\omega_\psi \left( \alpha_T(v(x, y, z)) \right) \Phi(\xi)= \psi(\mathrm{tr}(Tz)) \psi(\mathrm{tr}(TxJ_2 {}^t yJ_2)  ) \psi(\mathrm{tr}(2T\xi J_2 {}^t y J_2 ))\Phi(\xi+x),
\label{Preliminaries-eq-Weil-formula1}
\end{equation}
\begin{equation}
\omega_\psi   \left(1,  \left(\begin{smallmatrix}
a & \\
& a^*
\end{smallmatrix} \right) \right)  \Phi(\xi) =   \chi_T(\det(a)) \vert \det a \vert \Phi(\xi a), 
\label{Preliminaries-eq-Weil-formula2}
\end{equation}
\begin{equation}
\omega_\psi \left(       1,  \left(\begin{smallmatrix}
I_2 & w\\
& I_2
\end{smallmatrix}\right)  \right)   \Phi(\xi)=   \psi (   \mathrm{tr}( T   {}^t \xi w  \xi)     )\Phi(\xi)  .
\end{equation}
Here, $\xi\in \mathrm{Mat}_2(\mathbb{A})$, $\Phi\in \mathcal{S}(\mathrm{Mat}_2(\mathbb{A}))$, $v(x, y, z)\in N_{2,8}(\mathbb{A})$, $\left(\begin{smallmatrix}
a & \\
& a^*
\end{smallmatrix} \right)\in M_4(\mathbb{A})$, $\left(\begin{smallmatrix}
I_2 & w\\
& I_2
\end{smallmatrix}\right)\in N_4(\mathbb{A})$.

For $\Phi\in \mathcal{S}(\mathrm{Mat}_{2}(\mathbb{A}))$, we form the theta series 
\begin{equation}
\label{Preliminaries-eq-theta-series-Sp8} 
\theta_{\psi}^{\Phi}(\alpha_T(v) (m,h))=\theta_{\psi, 8}^{\Phi}(\alpha_T(v) (m,h))=\sum_{\xi\in \mathrm{Mat}_{2}(F)}\omega_\psi(\alpha_T(v) (m,h))\Phi(\xi), 
\end{equation}
where $v\in N_{2,8}(\mathbb{A}), m\in \mathrm{SO}_{T_0}(\mathbb{A}), h \in \mathrm{Sp}_4({\mathbb A})$.
It is known that the above series converges absolutely (see \cite[Proposition 5.3.1]{KudlaRallis1994}), and it is of moderate growth in each variable of $m$ and $h$ (see \cite[Lemma 2.1]{HowePiatetski-Shapiro1983}).

\subsection{Generalized Speh representations}

Let $\tau$ be an irreducible unitary automorphic cuspidal represntation of $\mathrm{GL}_2(\mathbb{A})$.  Let $c$ be a positive integer. We consider the unipotent subgroup $U_{(c^2)}$ of $\mathrm{GL}_{2c}$ of the type $(c, c)$, consisting of matrices of the form
\begin{equation*}
    u=\left(\begin{smallmatrix}
    I_c & u_{1,2}\\ & I_c\end{smallmatrix}\right), \, u_{1,2}\in \mathrm{Mat}_c.
\end{equation*}
We extend the character $\psi$ of $F\backslash \mathbb{A}$ to a character of $U_{(c^2)}(F)\backslash U_{(c^2)}(\mathbb{A})$ by
\begin{equation*}
    \psi(u)=\psi(\mathrm{tr}(u_{1,2})).
\end{equation*}
For an automorphic function $\varphi$ on $\mathrm{GL}_{2c}(F)\backslash \mathrm{GL}_{2c}(\mathbb{A})$, we consider the following Fourier coefficient
\begin{equation*}
    \Lambda(\varphi)=\int\limits_{U_{(c^2)}(F)\backslash U_{(c^2)}(\mathbb{A})}\varphi(u)\psi^{-1}(u)du.
\end{equation*}
This is a Fourier coefficient of Shalika type. More generally, it is called a
 Whittaker-Speh-Shalika coefficient in \cite{CaiFriedbergGinzburgKaplan2019}.

Let $\Delta(\tau, c)$ be the generalized Speh representation of $\mathrm{GL}_{2c}(\mathbb{A})$ defined by Jacquet \cite{Jacquet1984}. 
The Fourier coefficient $\Lambda(\varphi)$ does not vanish identically on the space of $\Delta(\tau, c)$ (see \cite[Theorem 13]{CaiFriedbergGinzburgKaplan2019}).
Let $\mathcal{W}(\tau, c, \psi)$ be the space of functions on $\mathrm{GL}_{2c}(\mathbb{A})$
\begin{equation*}
    g\mapsto \Lambda( \Delta(\tau, c)(g) \varphi),
\end{equation*}
as $\varphi$ varies in the space of $\Delta(\tau, c)$. This is a unique model that is afforded by $\Delta(\tau, c)$ (see \cite[Theorem 13]{CaiFriedbergGinzburgKaplan2019} for the global result and \cite{CaiFriedbergGourevitchKaplan2023} for the local result in a more general setting).

\subsection{A Siegel Eisenstein series}
Recall that $\chi_T$ is the  quadratic character on $F^\times \backslash \mathbb{A}^\times$ given by $\chi_T(x)=(x, \det(T))$. 
Given an irreducible unitary cuspidal automorphic representation $\tau$ of $\text{GL}_2(\mathbb{A})$, we form the generalized Speh representation $\Delta(\tau\otimes \chi_T, 2)$ of $\text{GL}_4(\mathbb{A})$ associated to $\tau\otimes\chi_T$,
and 
consider the space
\begin{equation*}
    \text{Ind}_{P_8(\mathbb{A})}^{\text{Sp}_8(\mathbb{A})}(\Delta(\tau\otimes\chi_T, 2)|\det|^s).
\end{equation*}
We realize elements of the above space as complex-valued functions, i.e., this is the space of all smooth functions $f_{\Delta(\tau\otimes\chi_T, 2), s}:\text{Sp}_8(\mathbb{A})\to \mathbb{C}$ such that for all $p=\left(\begin{smallmatrix} a & \\ & a^*\end{smallmatrix}\right)\left(\begin{smallmatrix} I_4 & z \\ & I_4\end{smallmatrix}\right)\in P_8(\mathbb{A})$ and $g\in \text{Sp}_8(\mathbb{A})$, we have
\begin{equation*}
    f_{\Delta(\tau\otimes\chi_T, 2), s}(pg)=|\det(a)|^{s+\frac{5}{2}} \Delta(\tau\otimes \chi_T, 2)(a) f_{\Delta(\tau\otimes\chi_T, 2), s}(g),
\end{equation*}
and for each $g\in \mathrm{Sp}_8(\mathbb{A})$, the function $a\mapsto |\det(a)|^{-s-\frac{5}{2}} f_{\Delta(\tau\otimes\chi_T, 2), s}\left( \left( \begin{smallmatrix} a&\\ &a^*\end{smallmatrix}\right) g \right)$ lies in the space of $\Delta(\tau\otimes\chi_T, 2)$.
For a smooth holomorphic section $f_{\Delta(\tau\otimes\chi_T, 2), s}\in \text{Ind}_{P_8(\mathbb{A})}^{\text{Sp}_8(\mathbb{A})}(\Delta(\tau\otimes\chi_T, 2)|\det|^s)$, we define an Eisenstein series by
\begin{equation*}
    E(g, f_{\Delta(\tau\otimes\chi_T, 2), s})=\sum_{\delta\in P_8(F)\backslash \text{Sp}_8(F)}f_{\Delta(\tau\otimes\chi_T, 2), s}(\delta g).
\end{equation*}
It's known (see \cite{Langlands1976}) that the above series converges absolutely for $\text{Re}(s)\gg 0$ and admits a meromorphic continuation to the entire complex plane.

We multiply the above Eisenstein series by its normalizing factor outside $S$, to get the (partially) normalized Eisenstein series. The normalizing factor outside $S$ is (see \cite[(1.48)]{GinzburgSoudry2021})
\begin{equation*}
\begin{split}
    d_{\tau\otimes\chi_T}^{\mathrm{Sp}_8, S}(s)=\prod_{\nu\not\in S} d_{\tau_\nu\otimes\chi_T}^{\mathrm{Sp}_8 }(s),
\end{split}
\end{equation*}
where
\begin{equation*}
    d_{\tau_\nu\otimes\chi_T}^{\mathrm{Sp}_8}(s)=L (s+\frac{3}{2}, \tau_\nu\otimes\chi_T)L (2s+2, \chi_{\tau_\nu}) L (2s+1, \tau_\nu\otimes\chi_T, \mathrm{Sym}^2).
\end{equation*}
We set
\begin{equation*}
    E^{*, S}(g, f_{\Delta(\tau\otimes \chi_T, 2), s})=    d_{\tau\otimes\chi_T}^{\mathrm{Sp}_8, S}(s) E(g, f_{\Delta(\tau\otimes \chi_T, 2), s}).
\end{equation*}

\subsection{The $L$-function for $\mathrm{Sp}_4\times \mathrm{GL}_2$}

In this subsection we describe the $L$-functions we study in this paper. 
Let $\nu$ be a finite place of $F$, and let $F_\nu$ be its completion with respect to $\nu$ with $q_\nu$ the cardinality of the residue field. Let $\pi_\nu$ be an irreducible unramified representation of $\mathrm{Sp}_4(F_\nu)$ and let $\tau_\nu$ be an irreducible unramified representation of $\mathrm{GL}_2(F_\nu)$ with central character $\chi_{\tau_\nu}$. Note that the $L$-group of $\mathrm{Sp}_4$ is ${}^L \mathrm{Sp}_4=\mathrm{SO}_5(\mathbb{C})$ and the $L$-group of $\mathrm{GL}_2$ is ${}^L \mathrm{GL}_2=\mathrm{GL}_2(\mathbb{C})$. Let $t_{\pi_\nu}$ be the semisimple conjugacy class of $\mathrm{SO}_5({\mathbb C})$ attached to $\pi_\nu$, given by
$
    t_{\pi_\nu}=\mathrm{diag}(\alpha_{1,\nu}, \alpha_{2, \nu}, 1, \alpha_{2, \nu}^{-1}, \alpha_{1,\nu}^{-1}).
$
Similarly, let $t_{\tau_\nu}$ be the semisimple conjugacy class of $\mathrm{GL}_2({\mathbb C})$ attached to $\tau_\nu$, given by
$
    t_{\tau_\nu}=\mathrm{diag}(\beta_{1, \nu}, \beta_{2, \nu}).
$
Then the local tensor product $L$-function for $\pi_\nu$ and $\tau_\nu$ is defined by
$
L(s, \pi_\nu \times \tau_\nu)=\det(1 - t_{\pi_\nu} \otimes t_{\tau_\nu} q_\nu^{-s})^{-1}.
$
The local standard $L$-function for $\tau_\nu$ is
$
L(s, \tau_\nu)=(1-\beta_{1, \nu} q_\nu^{-s} )^{-1}   (1-\beta_{2, \nu} q_\nu^{-s} )^{-1}.
$
The local symmetric square and exterior square $L$-functions for $\tau_\nu$ are 
\begin{equation*}
\begin{split}
L(s, \tau_\nu, \mathrm{Sym}^2) &=(1-\beta_{1,\nu}^2 q_\nu^{-s})^{-1} (1-\beta_{1,\nu} \beta_{2, \nu} q_\nu^{-s})^{-1} (1-\beta_{2, \nu}^2 q_\nu^{-s})^{-1}, \\
L(s, \tau_\nu, \mathrm{Ext}^2) &=L(s, \chi_{\tau_\nu})= (1-\beta_{1,\nu} \beta_{2, \nu} q_\nu^{-s})^{-1} .
\end{split}
\end{equation*}
It follows that for a quadratic character $\chi$ on $F_\nu^\times$, we have the identities
\begin{equation*}
\begin{split}
    L(s, \tau_\nu\otimes\chi, \mathrm{Sym}^2)=L(s, \tau_\nu, \mathrm{Sym}^2), \,L(s, \tau_\nu\otimes\chi, \mathrm{Ext}^2)=L(s, \tau_\nu, \mathrm{Ext}^2).
\end{split}
\end{equation*}

For a pair of irreducible automorphic cuspidal representations $\pi\cong \otimes^\prime_\nu \pi_\nu$ and $\tau\cong \otimes_\nu^\prime \tau_\nu$, of $\mathrm{Sp}_4(\mathbb{A})$ and $\mathrm{GL}_2(\mathbb{A})$ respectively, where the local represetations $\pi_\nu$ and $\tau_\nu$ are unramified outside a finite set $S$ of places, the tensor product partial $L$-function   is defined by
$$
L^S(s, \pi\times \tau)=\prod_{\nu\not\in S} L(s, \pi_\nu\times \tau_\nu).
$$
Similarly, the partial standard $L$-function, the partial symmetric square $L$-function, and the partial exterior square $L$-function for $\tau$ are given by
\begin{equation*}
\begin{split}
L^S(s, \tau) &= \prod_{\nu\not\in S} L(s, \tau_\nu),\\
L^S(s, \tau, \mathrm{Sym}^2) &=\prod_{\nu\not\in S} L(s, \tau_\nu, \mathrm{Sym}^2), \\
L^S(s, \tau, \mathrm{Ext}^2)  &=\prod_{\nu\not\in S} L(s, \tau_\nu, \mathrm{Ext}^2)=L^S(s,\chi_{\tau})=\prod_{\nu\not\in S} L(s, \chi_{\tau_\nu}) .
\end{split}
\end{equation*}

\section{The global integral and the unfolding computation}
\label{section-main-theorems}
In this section we discuss the global integral and perform the unfolding computation.

\subsection{Definition of the global integral}
\label{subsection-the-global-integral}
Let $F$ be a number field and $\mathbb{A}=\mathbb{A}_F$ the ring of adeles of $F$. We fix a nontrivial additive character $\psi$ on $F\backslash \mathbb{A}$. Let $(\pi, V_\pi)$ be an irreducible automorphic cuspidal representation of $\mathrm{Sp}_4(\mathbb{A})$ and let $(\tau, V_\tau)$  be an irreducible unitary automorphic cuspidal representation of $\mathrm{GL}_2(\mathbb{A})$.

For a cusp form $\varphi\in V_\pi$, we denote the Fourier coefficient of $\varphi$ along  the unipotent radical of the Siegel parabolic subgroup by
\begin{equation*}
\varphi_{\psi, T}(h):= \int\limits_{\mathrm{Mat}_2^0(F)\backslash \mathrm{Mat}_2^0(\mathbb{A})} \varphi \left(  \left(\begin{smallmatrix}
I_2 & z\\
& I_2
\end{smallmatrix}\right) h \right) \psi(\mathrm{tr}(Tz))dz,
\end{equation*}
where $T=J_2 T_0$, and $T_0\in \mathrm{GL}_2(F)$ is a symmetric matrix.
We remark that it follows from a theorem of Li \cite{LiJian-Shu1992} that every non-zero cusp form affords a non-zero Fourier coefficient $\varphi_{\psi, T}$ where $T$ has rank two.
Let $T$ be such that $\varphi_{\psi, T}\not= 0$. 
The space of such Fourier coefficients is denoted by $\mathcal{W}(\pi, \psi, T)$. 
To simplify our calculation we may assume that $T_0=\mathrm{diag}(t_1, t_2)$ is diagonal.

We consider the integral
\begin{equation}
\begin{split}
\mathcal{Z}( \varphi,   \theta_{\psi}^\Phi   , E^{*, S}(\cdot,  & f_{\Delta(\tau\otimes \chi_T, 2), s})) = \int\limits_{\mathrm{Sp}_4(F)\backslash \mathrm{Sp}_4(\mathbb{A})} \int\limits_{N_{2, 8}(F)\backslash N_{2, 8}(\mathbb{A})} \\
&
 \varphi(h)     \theta_{\psi}^\Phi (\alpha_T(v)(1, h))E^{*, S}(v(1, h), f_{\Delta(\tau\otimes \chi_T, 2), s})dvdh.
\label{eq-global-integral-0}
\end{split}
\end{equation}
Here, $\varphi\in V_\pi$ is a cusp form in the space of $\pi$. The function $\theta_{\psi}^\Phi $ is the theta series on $\mathcal{H}_9(\mathbb{A}) \rtimes \mathrm{Sp}_8(\mathbb{A})$ associated with a Schwartz function $\Phi\in \mathcal{S}(\mathrm{Mat}_2(\mathbb{A}))$. The function $f_{\Delta(\tau\otimes \chi_T, 2)}$ is a smooth, holomorphic section in the induced representation $\mathrm{Ind}_{P_{8}(\mathbb{A})}^{\mathrm{Sp}_{8}(\mathbb{A})}(\Delta(\tau\otimes\chi_T, 2)\vert \det \vert^s)$ and $E^{*, S}(\cdot, f_{\Delta(\tau\otimes \chi_T, 2), s})$ is the corresponding normalized Eisenstein series.

\subsection{The unfolding computation}
\label{section-Unfolding}

Let
\begin{equation*}
\begin{split}
\gamma = \left(\begin{smallmatrix}
&I_2 & &\\&&&-I_2\\I_2&&&\\&&I_2&
\end{smallmatrix}\right),
\end{split}
\end{equation*}
and let
\begin{equation*}
    N_{2,8}^0=\left\{ v(x, 0, z)=\left(\begin{smallmatrix} I_2 & x & 0& z\\ &I_2 &0 &0 \\ &&I_2 & x^*\\ &&&I_2\end{smallmatrix}\right) \in N_{2,8} \right\}.
\end{equation*}

In the following theorem we state the basic properties of the global integral. 

\begin{theorem}
The integral $\mathcal{Z}( \varphi,   \theta_{\psi}^\Phi   , E^{*, S}(\cdot, f_{\Delta(\tau\otimes \chi_T, 2), s}))$ converges absolutely for all $s\in \mathbb{C}$, away from the poles of the Eisenstein series.  There exists a $x_0>0$ which depends only on $\pi$ and $\tau$ such that the integral converges absolutely when $\mathrm{Re}(s)>x_0$, for all $\varphi, \Phi,$ and $f_{\Delta(\tau\otimes \chi_T, 2), s}$. When $\mathrm{Re}(s)$ sufficiently large, the integral  $\mathcal{Z}( \varphi,   \theta_{\psi}^\Phi   , E^{*, S}(\cdot, f_{\Delta(\tau\otimes \chi_T, 2), s}))$ unfolds to
\begin{equation}
\begin{split}
 \int\limits_{ N_4(\mathbb{A})  \backslash  \mathrm{Sp}_4(\mathbb{A})}     \int\limits_{N_{2,8}^0(\mathbb{A})}      \varphi_{\psi, T}(h) \omega_\psi\left( \alpha_T(v)(1,h)\right) )\Phi(I_2 )     f^*_{\mathcal{W}(\tau\otimes \chi_T, 2, \psi_{2T}), s}(\gamma v(1, h) )      dv dh.
\end{split}
\label{eq-thm-global-integral-unfolding-final}
\end{equation}
Here, 
$
    f^*_{\mathcal{W}(\tau\otimes \chi_T, 2, \psi_{2T}), s}(g)  = d_{\tau\otimes\chi_T}^{\mathrm{Sp}_8, S}(s) f_{\mathcal{W}(\tau\otimes \chi_T, 2, \psi_{2T}), s}(g )  
$
where $f_{\mathcal{W}(\tau\otimes \chi_T, 2, \psi_{2T}), s}$ is the composition of the section and the unique functional attached to $\Delta(\tau\otimes\chi_T, 2)$; explicitly, for any $g\in \mathrm{Sp}_8(\mathbb{A})$, 
\begin{equation*}
\begin{split}
f_{\mathcal{W}(\tau\otimes \chi_T, 2, \psi_{2T}), s}(g)=
\int\limits_{U_{(2^2)}(F)\backslash U_{(2^2)}(\mathbb{A}) } f_{\Delta(\tau\otimes \chi_T, 2), s} \left(     \left(\begin{smallmatrix} u & \\ & u^*\end{smallmatrix}\right) g \right) \psi_{2T}^{-1}(u) du,
\end{split}
\end{equation*}
where $U_{(2^2)}=\left\{ \left(\begin{smallmatrix} I_2 & x \\ & I_2\end{smallmatrix}\right)\in \mathrm{GL}_4  \right\}$, and $\psi_{2T}\left(\begin{smallmatrix} I_2 & x \\ & I_2\end{smallmatrix}\right)=\psi\left( \mathrm{tr}(2Tx)\right)$.
\label{theorem-global-unfolding}
\end{theorem}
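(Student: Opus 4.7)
The convergence at $\mathrm{Re}(s)\gg 0$ follows by combining the rapid decay of $\varphi$ on $\mathrm{Sp}_4(F)\backslash \mathrm{Sp}_4({\mathbb A})$, the moderate growth of $\theta_\psi^\Phi$ in the $\mathrm{Sp}_4$ variable noted after \eqref{Preliminaries-eq-theta-series-Sp8}, and the standard bound on $E(\cdot, f_{\Delta(\tau\otimes\chi_T,2),s})$ in vertical strips. Meromorphic continuation to all $s\in \mathbb{C}$ is inherited from the meromorphic continuation of the Eisenstein series, since the normalizing factor $d_{\tau\otimes\chi_T}^{\mathrm{Sp}_8,S}(s)$ is a finite product of local $L$-factors, hence meromorphic on $\mathbb{C}$.

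The unfolding starts by expanding $E(g, f_{\Delta(\tau\otimes\chi_T,2),s}) = \sum_{\delta\in P_8(F)\backslash \mathrm{Sp}_8(F)} f_{\Delta(\tau\otimes\chi_T,2),s}(\delta g)$ and interchanging the sum with the integration, which is legitimate for $\mathrm{Re}(s)\gg 0$. I would then analyze the double cosets $P_8(F)\backslash \mathrm{Sp}_8(F)/Q(F)$, where $Q = N_{2,8}\cdot \iota(\mathrm{Sp}_4)$ with $\iota(h)=(1,h)$, by a Bruhat-type analysis using the action of $\mathrm{Sp}_8$ on the Grassmannian of Lagrangian subspaces. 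This yields a finite list of representatives. For every non-open orbit the resulting integrand contains either a constant term of $\varphi$ along a proper parabolic of $\mathrm{Sp}_4$ (vanishing by cuspidality of $\pi$) or an inner period of $\Delta(\tau\otimes\chi_T,2)$ along a unipotent subgroup whose Jacquet module vanishes by cuspidality of $\tau$. Only the open orbit, with representative $\gamma$, survives.

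On the open orbit I would perform a sequence of root exchanges and Fourier expansions. The Siegel-unipotent variable of $\mathrm{Sp}_4$ is exchanged outward using the Weil-representation formulas \eqref{Preliminaries-eq-Weil-formula1}--\eqref{Preliminaries-eq-Weil-formula2}; summing over its rational points against the character induced from $f_{\Delta(\tau\otimes\chi_T,2),s}(\gamma\cdot)$ produces the Siegel Fourier coefficient $\varphi_{\psi,T}(h)$ and opens the domain of the $\mathrm{Sp}_4$ integration to $N_4({\mathbb A})\backslash \mathrm{Sp}_4({\mathbb A})$. Dually, unfolding the integration over $U_{(2^2)}(F)\backslash U_{(2^2)}({\mathbb A})$ against $\psi_{2T}^{-1}$ converts the Siegel section into the Whittaker-Speh-Shalika functional $f_{\mathcal{W}(\tau\otimes\chi_T,2,\psi_{2T}),s}$ appearing in the statement. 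The theta sum $\sum_{\xi\in \mathrm{Mat}_2(F)}\omega_\psi(\alpha_T(v)(1,h))\Phi(\xi)$ then collapses to the single term $\omega_\psi(\alpha_T(v)(1,h))\Phi(I_2)$, because the $y$-variable in $v(x,y,z)\in N_{2,8}$ acts on $\xi$ by translation under \eqref{Preliminaries-eq-Weil-formula1}; summing over rational $y$ together with the $\xi$-sum over $\mathrm{Mat}_2(F)$ reduces to the single orbit of $I_2$ and leaves $N_{2,8}^0({\mathbb A})$ as the surviving inner domain.

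The main obstacle is the orbit analysis together with the character bookkeeping. Specifically, one must verify that cuspidality of $\tau$ (and not merely that of $\pi$) eliminates every non-open cell, and one must track carefully how the character $\psi_{2T}$ on $U_{(2^2)}$ arises: the factor of $2$ comes from combining the character produced by the Weil representation on $v(x,0,z)$ in \eqref{Preliminaries-eq-Weil-formula1} with the character carried by $f_{\Delta(\tau\otimes\chi_T,2),s}(\gamma\cdot)$, via conjugation by $\gamma$, and any sign or scalar mismatch would propagate into the final identification of the Whittaker-Speh-Shalika functional.
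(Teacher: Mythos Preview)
Your overall strategy---unfold the Eisenstein series along $P_8\backslash \mathrm{Sp}_8$, kill the non-open cells, then unfold the theta series---matches the paper's. But two of your mechanisms are incorrect, and one of them would not repair itself.

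First, cuspidality of $\tau$ plays no role in eliminating the non-open Eisenstein cells. The paper uses the decomposition $P_8(F)\backslash \mathrm{Sp}_8(F)/P_{2,8}(F)$ with three representatives $\gamma_0=I_8$, $\gamma_1$, $\gamma_2$ (Lemma~\ref{lemma-double-coset-decomposition}). The contributions $I(\gamma_0)$ and $I(\gamma_1)$ vanish by pure character arguments: one isolates the integral over the Heisenberg center $N_{2,8}^c=\{v(0,0,z)\}$, on which the theta kernel transforms by $\psi(\mathrm{tr}(Tz))$ via \eqref{eq-global-unfolding-I(gamma_0)-theta} while the section is invariant (or nearly so after conjugation), producing $\int_{[\mathrm{Mat}_2^0]}\psi(\mathrm{tr}(Tz'))\,dz'=0$. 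No Jacquet module of $\Delta(\tau\otimes\chi_T,2)$ is ever invoked, and your concern that ``cuspidality of $\tau$ (and not merely that of $\pi$) eliminates every non-open cell'' is misplaced.

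Second, your description of the theta collapse is wrong. In \eqref{Preliminaries-eq-Weil-formula1} it is the $x$-coordinate of $v(x,y,z)$ that translates $\xi$, via $\Phi(\xi)\mapsto \Phi(\xi+x)$; the $y$-coordinate contributes only the phase $\psi(\mathrm{tr}(2T\xi J_2{}^t y J_2))$. The paper instead decomposes $\mathrm{Mat}_2(F)$ into four orbits under \emph{right multiplication} by $\mathrm{GL}_2(F)$ (coming from collapsing the $M_4(F)$-sum in $P_4(F)\backslash \mathrm{Sp}_4(F)$), and the three degenerate orbits---rank $0$ and the two rank-$1$ families---are killed by cuspidality of $\pi$ along $N_4$ (Proposition~\ref{prop-global-unfolding-I(gamma_2, xi)-vanishes}), not by any Poisson-type cancellation in $y$. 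Only the full-rank orbit $\xi=I_2$ survives, and then the $[N_{\gamma_2}]$-integral over $v(0,y,0)$ produces the $\psi_{2T}$ Fourier coefficient of the section; the factor $2$ comes entirely from evaluating the phase $\psi(\mathrm{tr}(2T\xi J_2{}^t y J_2))$ at $\xi=I_2$, not from a combination with a character on the section side.
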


The absolute convergence of the integral $\mathcal{Z}( \varphi,   \theta_{\psi}^\Phi   , E^{*, S}(\cdot, f_{\Delta(\tau\otimes \chi_T, 2), s}))$ follows due to the rapid decrease of $\varphi$ on $\mathrm{Sp}_4(F)\backslash \mathrm{Sp}_4(\mathbb{A})$, the moderate growth of the Eisenstein series and the theta series, and the compactness of $N_{2, 8}(F)\backslash N_{2, 8}(\mathbb{A})$.
The proof of the unfolding computation occupies the rest of this section. It's convenient to work with the integral $\mathcal{Z}( \varphi,   \theta_{\psi}^\Phi   ,  E(\cdot, f_{\Delta(\tau\otimes \chi_T, 2), s}))$ where the Eisenstein series is not normalized.

We begin with a parametrization of the representatives of  the $P_{2,8}(F)$-orbits in $P_8(F)\backslash \mathrm{Sp}_8(F)$. For each orbit $P_8(F)\gamma_i$, we denote $H_{\gamma_i}(F)$ its stabilizer in $P_{2,8}(F)$. There are three $P_{2,8}(F)$-orbits by \cite[Section 4]{GinzburgRallisSoudry1998}. The representatives of the orbits and their stabilizers are given below.
\begin{lemma}
The following is a list of representatives of the $P_{2,8}(F)$-orbits in $P_8(F)\backslash \mathrm{Sp}_8(F)$ and their stabilizers: \\
\begin{itemize}
\item[(i)]  $\gamma_0=I_8$, and the stabilizer $H_{\gamma_0}(F)=M_{\gamma_0}(F)\ltimes N_{\gamma_0}(F)$, where
\begin{equation*}
\begin{split}
M_{\gamma_0}(F)          =\left\{ \left(\begin{smallmatrix}
g_0 &&\\
&h_0 &\\
&&J_2 {}^{t}g_0^{-1}J_2
\end{smallmatrix}\right) : g_0\in \mathrm{GL}_2(F), h_0\in P_4(F)\right\}, \, 
 N_{\gamma_0}(F)          =N_{2,8}(F).
 \end{split}
\end{equation*}

\item[(ii)]    $ \gamma_1=  \left(\begin{smallmatrix}
& & &-1\\
& &I_{6} &\\
&1  &&
\end{smallmatrix}\right)$, and the stabilizer  $H_{\gamma_1}(F)=M_{\gamma_1}(F)\ltimes N_{\gamma_1}(F)$, where
\begin{equation*}
\begin{split}
M_{\gamma_1}(F)          &=\left\{ \left(\begin{smallmatrix}
g_0 &&\\
&h_0 &\\
&&J_2 {}^{t}g_0^{-1}J_2
\end{smallmatrix}\right) : g_0 \text{ in lower triangular Borel } B_{\mathrm{GL}_2}^{-}(F), h_0\in P_4(F)\right\}, \\ 
 N_{\gamma_1}(F)            &=\left\{ v(x, y, z)\in N_{2,8}(F): x=\left(\begin{smallmatrix}
0 & 0\\
* & *
\end{smallmatrix}\right) , z=\left(\begin{smallmatrix}
* & 0\\
* &*
\end{smallmatrix}\right) \right\}.
\end{split}
\end{equation*}

\item[(iii)]   $ \gamma_2=\left(\begin{smallmatrix}
& & &-I_2\\
& &I_{4} &\\
&I_2  &&
 \end{smallmatrix}\right)$, and the stabilizer $H_{\gamma_2}(F)=M_{\gamma_2}(F)\ltimes N_{\gamma_2}(F)$, where
 \begin{equation*}
 \begin{split}
 M_{\gamma_2}(F)         &=\left\{ \left(\begin{smallmatrix}
g_0 &&\\
&h_0 &\\
&&J_2 {}^{t}g_0^{-1}J_2
\end{smallmatrix}\right) : g_0\in \mathrm{GL}_2(F),  h_0\in P_4(F)\right\} ,\\
 N_{\gamma_2}(F)    &=\left\{ v(0, y, 0)\in N_{2,8}(F)\right\}.
\end{split}
 \end{equation*}
\end{itemize}
\label{lemma-double-coset-decomposition}
\end{lemma}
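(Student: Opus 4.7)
The plan is to invoke the geometric description of the double coset space $P_8(F)\backslash \mathrm{Sp}_8(F)/P_{2,8}(F)$. The Siegel parabolic $P_8$ is the stabilizer of the maximal isotropic subspace $L_0=\mathrm{span}(e_1,\ldots,e_4)$, and $P_{2,8}$ is the stabilizer of the $2$-dimensional isotropic subspace $V_2=\mathrm{span}(e_1,e_2)$. Hence the double coset space is in bijection with the set of $\mathrm{Sp}_8(F)$-orbits on pairs $(L,V)$ consisting of a maximal and a $2$-dimensional isotropic subspace of $F^8$. By Witt's extension theorem, such orbits are classified by the single invariant $d=\dim(L\cap V)\in\{0,1,2\}$, so there are exactly three double cosets, agreeing with the count cited from \cite{GinzburgRallisSoudry1998}.

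Next I would verify that $\gamma_0,\gamma_1,\gamma_2$ are in distinct orbits by computing $\dim(L_0\cap\gamma_i V_2)$ directly. For $\gamma_0=I_8$ we have $\gamma_0 V_2=V_2\subseteq L_0$, giving $d=2$. The matrix $\gamma_2$ interchanges the outer $2$-blocks of $F^8$, so $\gamma_2 V_2$ lies in $\mathrm{span}(e_7,e_8)$, giving $d=0$. The matrix $\gamma_1$ interchanges only a single coordinate at each end while fixing the middle, producing $d=1$. Together with the geometric classification, this confirms that $\{\gamma_0,\gamma_1,\gamma_2\}$ is a complete set of representatives.

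For each $i$ the stabilizer is
\begin{equation*}
H_{\gamma_i}(F)=P_{2,8}(F)\cap\gamma_i^{-1}P_8(F)\gamma_i,
\end{equation*}
so I would write a general element $h\in P_{2,8}(F)$ in its block form prescribed by the Levi decomposition and impose the condition that $\gamma_i h\gamma_i^{-1}$ be block upper-triangular with respect to the $4+4$ Siegel partition. When $\gamma_0=I_8$, this forces the $\mathrm{Sp}_4$-factor of the Levi to lie in the Siegel parabolic $P_4$ while leaving the unipotent radical $N_{2,8}$ intact, yielding the stated $H_{\gamma_0}$. When $\gamma_i=\gamma_2$, the conjugation swaps the two outer $2\times 2$ blocks; working out the resulting constraint shows that the Levi is preserved but that the $x$ and $z$ coordinates of $N_{2,8}$ are forced to vanish, leaving precisely $N_{\gamma_2}$. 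For $\gamma_1$ the conjugation is only a partial swap of a single coordinate, and it produces simultaneously the triangularity condition on $g_0\in\mathrm{GL}_2$ (yielding the lower-triangular Borel $B_{\mathrm{GL}_2}^-$) and the mixed vanishing conditions on the entries of $x$ and $z$ spelled out in part (ii).

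The main technical obstacle is the bookkeeping in the $\gamma_1$ case: because the conjugation only permutes a single coordinate while fixing the rest, one must carefully trace which entries of an arbitrary element of $P_{2,8}(F)$ are forced to zero by the block upper-triangularity condition, and check that the remaining entries organize themselves exactly into the semidirect product $M_{\gamma_1}(F)\ltimes N_{\gamma_1}(F)$ as stated. The cases $\gamma_0$ and $\gamma_2$ are routine matrix verifications once $\gamma_1$ has been handled, and a final dimension count against $P_{2,8}$ provides a sanity check that no further conditions were missed.
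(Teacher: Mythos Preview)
Your proposal is correct and follows essentially the same route as the paper. The paper's own proof simply cites \cite[Section~4]{GinzburgRallisSoudry1998} for the representatives and states that the stabilizers ``can be determined by matrix computations,'' omitting all details; your geometric framing via $\dim(L_0\cap\gamma_i V_2)$ and your explicit description of the stabilizer calculation $H_{\gamma_i}=P_{2,8}\cap\gamma_i^{-1}P_8\gamma_i$ flesh out exactly what the paper leaves implicit.
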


\begin{proof}
See \cite[Section 4]{GinzburgRallisSoudry1998} for the explicit representatives $\gamma_i$. Their stabilizers $H_{\gamma_i}(F)$ are not included in \cite[Section 4]{GinzburgRallisSoudry1998}, but they can be determined by matrix computations. We omit the details.
\end{proof}

We assume that $\mathrm{Re}(s)$ is sufficiently large. Now we unfold the Eisenstein series. We have
\begin{equation*}
\begin{split}
\mathcal{Z}(  \varphi,   &\theta_{\psi}^\Phi   ,  E(\cdot, f_{\Delta(\tau\otimes \chi_T, 2), s})) =  \int\limits_{\mathrm{Sp}_4(F)\backslash \mathrm{Sp}_4(\mathbb{A})} \int\limits_{N_{2, 8}(F)\backslash N_{2, 8}(\mathbb{A})} \varphi(h)     \theta_{\psi}^\Phi (\alpha_T(v)(1, h))   \\
& \sum_{\gamma_i \in P_8(F)\backslash \mathrm{Sp}_{8}(F)  / P_{2, 8}(F)}  \sum_{g \in H_{\gamma_i}(F) \backslash P_{2, 8}(F)} f_{\Delta(\tau\otimes \chi_T, 2), s}(\gamma_i g v(1, h))
dvdh. 
\end{split}
\end{equation*}
We  interchange the summation over $\gamma_i$ with integration. This shows that 
\begin{equation*}
\begin{split}
\mathcal{Z}( \varphi,   \theta_{\psi}^\Phi   , E(\cdot, f_{\Delta(\tau\otimes \chi_T, 2), s})) = \sum_{\gamma_i \in P_8(F)\backslash \mathrm{Sp}_8(F) / P_{2,8}(F)}I(\gamma_i;\varphi, \Phi, f_{\Delta(\tau\otimes \chi_T, 2), s}),
\end{split}
\end{equation*}
where $I(\gamma_i;\varphi, \Phi, f_{\Delta(\tau\otimes \chi_T, 2), s})$ is equal to
\begin{equation*}
\begin{split}
\int\limits_{\mathrm{Sp}_4(F)\backslash \mathrm{Sp}_4(\mathbb{A})} \int\limits_{N_{2, 8}(F)\backslash N_{2, 8}(\mathbb{A})}    \varphi(h)     \theta_{\psi}^\Phi (\alpha_T(v)(1, h)) \sum_{g \in H_{\gamma_i}(F) \backslash P_{2, 8}(F)     } f_{\Delta(\tau\otimes \chi_T, 2), s}(\gamma_i g v(1, h))  dvdh.
\end{split}
\end{equation*}

In the rest of this section we will show that there is a unique double coset $P_8(F)\gamma_2 P_{2,8}(F)$ coming from $\gamma_2$ that supports non-zero contribution towards the global integral $\mathcal{Z}( \varphi,   \theta_{\psi}^\Phi   , E(\cdot, f_{\Delta(\tau\otimes \chi_T, 2), s}))$. We will use the following two methods to show that $I(\gamma_i;\varphi, \Phi, f_{\Delta(\tau\otimes \chi_T, 2), s})=0$. The first one is to use the additive character. Specifically, if there is a unipotent group $U$ such that $\psi_U$ is a non-trivial additive character on $U({\mathbb A})$ which is trivial on $U(F)$, then the integral of $\psi_U$ on $U(F)\backslash U({\mathbb A})$ vanishes. The second one is to use the cuspidality of $\pi$: if $N$ is the unipotent radical of a parabolic subgroup of $\mathrm{Sp}_4$, then the integral $\int_{N(F)\backslash N({\mathbb A})}\varphi(nh)dn$ is zero because $\varphi$ is a cusp form. 
Similar methods appear in \cite[pp. 1000-1001]{CaiFriedbergGinzburgKaplan2019}. 

Henceforth until the end of Section~\ref{section-Unfolding}, to ease notation we may abbreviate$f_{\Delta(\tau\otimes \chi_T, 2), s}$ as simply $f$. We also write $[G]$ for the adelic quotient $G(F)\backslash G(\mathbb{A})$. All the computations involving integrals in the rest of this section are performed for $\mathrm{Re}(s)$ large enough. 

\subsubsection{Contribution from $I(\gamma_0;\varphi, \Phi, f_{\Delta(\tau\otimes \chi_T, 2), s})$}

In this subsection, we show that $I(\gamma_0;\varphi, \Phi, f_{\Delta(\tau\otimes \chi_T, 2), s})=0$.

By Lemma~\ref{lemma-double-coset-decomposition}, $ H_{\gamma_0}(F) \backslash P_{2, 8}(F)\cong P_4(F)\backslash \mathrm{Sp}_4(F)$.
Since $I(\gamma_0;\varphi, \Phi, f_{\Delta(\tau\otimes \chi_T, 2), s})$ is absolutely convergent (for $\mathrm{Re}(s)$ sufficiently large), we can interchange summation with integration over $N_{2,8}(F)\backslash N_{2,8}(\mathbb{A})$ to obtain
\begin{equation*}
\begin{split}
I(\gamma_0;\varphi, \Phi, f_{\Delta(\tau\otimes \chi_T, 2), s})= \int\limits_{[\mathrm{Sp}_4]}   \sum_{h_0 \in  P_4(F)\backslash \mathrm{Sp}_4(F)}    \int\limits_{[N_{2, 8}]}  
\varphi(h)     \theta_{\psi}^\Phi (\alpha_T(v)(1, h))     f (  (1, h_0)v(1, h))  dvdh.
\end{split}
\end{equation*}
We write $v=v(x, y, z)$. After conjugating $(1, h_0)$ to the right of $v(x, y, z)$, and doing a change of variables $v(x, y, z)\mapsto v((x, y)h_0, z)$, we get
\begin{equation}
\label{eq-unfolding-convengence-eq1}
\begin{split}
\int\limits_{[\mathrm{Sp}_4]}   \sum_{h_0 \in  P_4(F)\backslash \mathrm{Sp}_4(F)}    \int\limits_{[N_{2, 8}]}    
  \varphi(h) \theta_{\psi}^\Phi (\alpha_T(v((x, y)h_0, z))(1, h))     f (  v(x, y, z) (1, h_0)(1,  h))  dvdh.
\end{split}    
\end{equation}
Using the automorphy of $\varphi$ and $\theta_{\psi}^\Phi $, we see that for any $h_0\in \mathrm{Sp}_4(F)$, we have
\begin{equation*}
 \varphi(h) \theta_{\psi}^\Phi (\alpha_T(v((x, y)h_0, z))(1, h))  =  \varphi( h_0 h) \theta_{\psi}^\Phi (\alpha_T(v(x, y, z))(1, h_0 h)).
\end{equation*}
We may collapse the summation over $P_4(F)\backslash \mathrm{Sp}_4(F)$
with the $dh$-integration, to obtain
\begin{equation*}
\begin{split}
I(\gamma_0;\varphi, \Phi, f_{\Delta(\tau\otimes \chi_T, 2), s})  =  \int\limits_{P_4(F)\backslash \mathrm{Sp}_4(\mathbb{A})} \int\limits_{[N_{2, 8}]}      \varphi(h) \theta_{\psi}^\Phi (\alpha_T(v) (1, h))     f (  v   (1,  h))  dvdh.
\end{split}
\end{equation*}
We consider the following normal subgroup of $N_{2,8}$, 
\begin{equation*}
    N_{2,8}^c:=\left\{ v(0,0, z)=\left(\begin{smallmatrix}
I_2 &  &  & z\\
& I_2 & &  \\
& &I_2 & \\
& & &I_2
\end{smallmatrix}\right)\in N_{2,8} \right\}.
\end{equation*}
Note that for any $v(0,0,z)\in N_{2,8}^c(\mathbb{A})$, $v^\prime\in N_{2,8}(\mathbb{A}), h\in \mathrm{Sp}_4(\mathbb{A})$, by \eqref{Preliminaries-eq-Weil-formula1} we have
\begin{equation}
\begin{split}
\theta_{\psi}^\Phi(\alpha_T(v(0, 0, z))\alpha_T(v^\prime)(1, h)) =\psi(\mathrm{tr}(Tz)) \theta_{\psi}^\Phi(\alpha_T(v^\prime)(1, h)) , 
\end{split}
\label{eq-global-unfolding-I(gamma_0)-theta}
\end{equation}
and $f ( v(0,0,z) v^\prime   (1,  h)) = f (   v^\prime   (1,  h)) $
since $v(0,0,z)\in N_{8}(\mathbb{A})$.
Then $I(\gamma_0;\varphi, \Phi, f_{\Delta(\tau\otimes \chi_T, 2), s}) $ is equal to
\begin{equation*}
\begin{split}
& \int\limits_{P_4(F)\backslash \mathrm{Sp}_4(\mathbb{A})} \int\limits_{ N_{2,8}^c(\mathbb{A}) N_{2, 8}(F)\backslash N_{2, 8}(\mathbb{A})}  \int\limits_{[\mathrm{Mat}_2^0]}     \varphi(h) \theta_{\psi}^\Phi ( \alpha_T(v(0,0,z)) \alpha_T(v) (1, h))     f (v(0,0,z)  v   (1,  h))  dz dvdh \\
 = & \int\limits_{P_4(F)\backslash \mathrm{Sp}_4(\mathbb{A})} \int\limits_{ N_{2,8}^c(\mathbb{A}) N_{2, 8}(F)\backslash N_{2, 8}(\mathbb{A})}  \int\limits_{[\mathrm{Mat}_2^0]} \psi(\mathrm{tr}(Tz))dz  \varphi(h) \theta_{\psi}^\Phi ( \alpha_T( v) (1, h))     f (  v   (1,  h))   dvdh.
\end{split}
\end{equation*}
We conclude that $I(\gamma_0;\varphi, \Phi, f_{\Delta(\tau\otimes \chi_T, 2), s})$ vanishes since $\displaystyle \int\limits_{[\mathrm{Mat}_2^0]} \psi(\mathrm{tr}(Tz))dz=0$.

\subsubsection{Contribution from $I(\gamma_1;\varphi, \Phi, f_{\Delta(\tau\otimes \chi_T, 2), s})$}

In this subsection, we show that $I(\gamma_1;\varphi, \Phi, f_{\Delta(\tau\otimes \chi_T, 2), s})=0$.

By Lemma~\ref{lemma-double-coset-decomposition}, we see that $I(\gamma_1;\varphi, \Phi, f_{\Delta(\tau\otimes \chi_T, 2), s})$ is equal to
\begin{equation*}
\begin{split}
\int\limits_{[\mathrm{Sp}_4]} \int\limits_{[N_{2, 8}]}     \varphi(h)     \theta_{\psi}^\Phi (\alpha_T(v)(1, h))     \sum_{\substack{g_0\in B_{\mathrm{GL}_2}^{-}(F)\backslash \mathrm{GL}_2(F)\\ h_0\in P_4(F)\backslash \mathrm{Sp}_4(F) \\ v_0  \in  N_{\gamma_1}(F) \backslash N_{2,8}(F)}} 
f (\gamma_1 v_0 (g_0, h_0)v(1, h))  dvdh.
\end{split}
\end{equation*}
Interchanging the integration over $N_{2,8}(F)\backslash N_{2,8}(\mathbb{A})$ with the summation in $g_0$ and $h_0$, we get
\begin{equation*}
\begin{split}
 \int\limits_{[\mathrm{Sp}_4]}        \sum_{\substack{g_0\in B_{\mathrm{GL}_2}^{-}(F)\backslash \mathrm{GL}_2(F)\\ h_0\in P_4(F)\backslash \mathrm{Sp}_4(F)}}   \int\limits_{[N_{2, 8}]}     \varphi(h)     \theta_{\psi}^\Phi (\alpha_T(v)(1, h))      \sum_{\substack{v_0  \in  N_{\gamma_1}(F) \backslash N_{2,8}(F)}}   f (\gamma_1 v_0 (g_0, h_0)v(1, h))  dvdh.
\end{split}
\end{equation*}
We write $v=v(x, y, z)$. Then 
\begin{equation*}
\begin{split}
    (g_0, h_0)v(x, y, z) &=(g_0, 1) v((x, y)h_0^{-1}, z) (1, h_0),  \\
    (1, h_0)\alpha_T(v((x, y)h_0, z)) &=\alpha_T(v(x, y, z))(1, h_0).
\end{split}
\end{equation*}
Changing the variables $v(x, y, z)\mapsto v((x, y)h_0, z)$ and using automorphy of $\theta_{\psi}^\Phi$, and collapsing the summation over $P_4(F)\backslash \mathrm{Sp}_4(F)$ with the integration over $\mathrm{Sp}_4(F)\backslash \mathrm{Sp}_4(\mathbb{A})$, we obtain that $I(\gamma_1;\varphi, \Phi, f_{\Delta(\tau\otimes \chi_T, 2), s})$ is equal to
\begin{equation*}
\begin{split}
\int\limits_{P_4(F)\backslash \mathrm{Sp}_4(\mathbb{A})}  \sum_{g_0\in B_{\mathrm{GL}_2}^{-}(F)\backslash \mathrm{GL}_2(F)}                  \int\limits_{[N_{2, 8}]}     \varphi(h)     \theta_{\psi}^\Phi (\alpha_T(v)(1, h))        \sum_{\substack{v_0  \in  N_{\gamma_1}(F) \backslash N_{2,8}(F)}}   f (\gamma_1 v_0 (g_0, 1)v(1, h))  dvdh.
\end{split}
\end{equation*}
Recall that $N_{2,8}^c$ is a normal subgroup of $N_{2,8}$. Using
\eqref{eq-global-unfolding-I(gamma_0)-theta}, we get
\begin{equation*}
\begin{split}
I(\gamma_1;\varphi, \Phi, f_{\Delta(\tau\otimes \chi_T, 2), s})   = & \int\limits_{P_4(F)\backslash \mathrm{Sp}_4(\mathbb{A})}  \sum_{g_0\in B_{\mathrm{GL}_2}^{-}(F)\backslash \mathrm{GL}_2(F)}    \int\limits_{ N_{2,8}^c(\mathbb{A}) N_{2, 8}(F)\backslash N_{2, 8}(\mathbb{A})}  \int\limits_{[\mathrm{Mat}_2^0]}        \varphi(h) \psi(\mathrm{tr}(Tz)) \\
&\theta_{\psi}^\Phi (\alpha_T(v)(1, h))   \sum_{\substack{v_0  \in  N_{\gamma_1}(F) \backslash N_{2,8}(F)}} f (\gamma_1 v_0 (g_0, 1) v(0,0,z) v(1, h))  dzdvdh.
\end{split}
\end{equation*}
Now we use the identity $(g_0, 1) v(0,0,z)=v(0,0, g_0 z J_2 {}^t g_0 J_2) (g_0, 1)$
to conjugate $(g_0, 1)$ to the right of $v(0,0,z)$.
After making a change of variable $z\mapsto g_0^{-1}zJ_2 {}^t g_0^{-1} J_2$, we obtain that $I(\gamma_1;\varphi, \Phi, f_{\Delta(\tau\otimes \chi_T, 2), s})$ is equal to
\begin{equation*}
\begin{split}
&     \int\limits_{P_4(F)\backslash \mathrm{Sp}_4(\mathbb{A})}  \sum_{g_0\in B_{\mathrm{GL}_2}^{-}(F)\backslash \mathrm{GL}_2(F)}    \int\limits_{ N_{2,8}^c(\mathbb{A}) N_{2, 8}(F)\backslash N_{2, 8}(\mathbb{A})}  \int\limits_{[\mathrm{Mat}_2^0]}        \varphi(h) \psi(\mathrm{tr}(T g_0^{-1}zJ_2 {}^t g_0^{-1} J_2)) \\
& \ \ \ \ \ \ \    \theta_{\psi}^\Phi (\alpha_T(v)(1, h))   \sum_{\substack{v_0  \in  N_{\gamma_1}(F) \backslash N_{2,8}(F)}} f (\gamma_1 \cdot v_0 \cdot  v(0,0,z) \cdot (g_0, 1) \cdot  v\cdot (1, h))  dzdvdh.
\end{split}
\end{equation*}
Recall that $\mathrm{Mat}_2^0=\left\{ \left(\begin{smallmatrix}
z_1 & z_2 \\
z_3 & z_1
\end{smallmatrix} \right)\right\}$. Since $N_{2,8}^c$ is a normal subgroup of $N_{2,8}$, we have
\begin{equation*}
\begin{split}
I(\gamma_1;\varphi, \Phi, f_{\Delta(\tau\otimes \chi_T, 2), s})   =     & \int\limits_{P_4(F)\backslash \mathrm{Sp}_4(\mathbb{A})}  \sum_{g_0\in B_{\mathrm{GL}_2}^{-}(F)\backslash \mathrm{GL}_2(F)}    
\int\limits_{ N_{2,8}^c(\mathbb{A}) N_{2, 8}(F)\backslash N_{2, 8}(\mathbb{A})}  \int\limits_{(F\backslash \mathbb{A})^3}          \\
&         \psi(\mathrm{tr}(T g_0^{-1}\left(\begin{smallmatrix} z_1 & z_2 \\ z_3 & z_1\end{smallmatrix}\right)J_2 {}^t g_0^{-1} J_2))      \varphi(h) \theta_{\psi}^\Phi (\alpha_T(v)(1, h))  \sum_{\substack{v_0  \in  N_{\gamma_1}(F) \backslash N_{2,8}(F)}}    \\
&           f \left(\gamma_1 \cdot   v\left(0,0, \left(\begin{smallmatrix} 0 & 0 \\z_3&0\end{smallmatrix}\right) \right) \cdot  v\left(0,0,\left(\begin{smallmatrix} z_1 & z_2 \\ 0 & z_1\end{smallmatrix}\right) \right)
  v_0 \cdot   (g_0, 1)    v  (1, h)
\right)  dz_3 dz_2 dz_1 dvdh.
\end{split}
\end{equation*}
Note that $\gamma_1$ commutes with $v\left(0,0, \left(\begin{smallmatrix} 0 & 0 \\z_3&0\end{smallmatrix}\right) \right)$, and $v\left(0,0, \left(\begin{smallmatrix} 0 & 0 \\z_3&0\end{smallmatrix}\right) \right)\in N_8(\mathbb{A})$. Therefore, we get
\begin{equation*}
\int\limits_{F\backslash \mathbb{A}}  \psi\left(\mathrm{tr}(Tg_0^{-1}  \left(\begin{smallmatrix}
0 & 0\\
z_3 &0
\end{smallmatrix}\right) J_2 {}^t g_0^{-1} J_2)\right) dz_3
\end{equation*}
as an inner integration, which is zero since $z_3\mapsto \psi\left(\mathrm{tr}(Tg_0^{-1}  \left(\begin{smallmatrix}
0 & 0\\
z_3 &0
\end{smallmatrix}\right) J_2 {}^t g_0^{-1} J_2)\right)$ is a non-trivial character on $F\backslash {\mathbb A}$. Thus, $I(\gamma_1;\varphi, \Phi, f_{\Delta(\tau\otimes \chi_T, 2), s})=0$.

We now have shown that for $\mathrm{Re}(s)\gg 0$, we have
$
 \mathcal{Z}( \varphi,   \theta_{\psi}^\Phi   , E(\cdot, f)) = I(\gamma_2;\varphi, \Phi, f_{\Delta(\tau\otimes \chi_T, 2), s}).   
$

\subsubsection{Contribution from $I(\gamma_2;\varphi, \Phi, f_{\Delta(\tau\otimes \chi_T, 2), s})$}

Using Lemma~\ref{lemma-double-coset-decomposition}, we see that $I(\gamma_2;\varphi, \Phi, f_{\Delta(\tau\otimes \chi_T, 2), s})$ is equal to
\begin{equation*}
\begin{split}
\int\limits_{[\mathrm{Sp}_4]} \int\limits_{[N_{2, 8}]}     \varphi(h)     \theta_{\psi}^\Phi (\alpha_T(v)(1, h))      \sum_{\substack{ h_0\in P_4(F)\backslash \mathrm{Sp}_4(F) \\ v_0  \in  N_{\gamma_2}(F) \backslash N_{2,8}(F)}} f (\gamma_2 v_0 (1, h_0)v(1, h))  dvdh.
\end{split}
\end{equation*}
We interchange the summation over $P_4(F)\backslash \mathrm{Sp}_4(F)$ with the $dv$-integration, then make a change of variables $v(x, y, z)\mapsto v((x, y)h_0, z)$, and then use the automorphy of $\varphi$ and $\theta_{\psi}^\Phi$, and then collapse the integrals with summations, to get
\begin{equation*}
\begin{split}
I(\gamma_2;\varphi, \Phi, f_{\Delta(\tau\otimes \chi_T, 2), s})   = \int\limits_{P_4(F)\backslash \mathrm{Sp}_4(\mathbb{A})} \int\limits_{N_{\gamma_2}(F) \backslash N_{2, 8}(\mathbb{A})}     \varphi(h)     \theta_{\psi}^\Phi (\alpha_T(v)(1, h))    f (\gamma_2 v (1, h))  dvdh.
\end{split}
\end{equation*}

Our next step is to unfold the theta series $\theta_{\psi}^\Phi$. Note that $\mathrm{GL}_2(F)$ acts on $\mathrm{Mat}_2 (F)$ by right multiplication.
There are four $\mathrm{GL}_2(F)$-orbits in $\mathrm{Mat}_2 (F)$, which are given below:
\begin{itemize}
\item   $\mathrm{GL}_2(F)$, represented by $\xi=I_2$;
\item   $\{0\}$, represented by $\xi=0$;
\item   $\left\{\left(\begin{smallmatrix}
a & b\\
\lambda a &\lambda b
\end{smallmatrix}\right):(a ,b)\not=(0,0), \lambda\in F \right\}$, represented by $\xi_\lambda=\left(\begin{smallmatrix}
0 & 1\\
0 &\lambda
\end{smallmatrix}\right)$;
\item   $\left\{\left(\begin{smallmatrix}
0 & 0\\
a & b
\end{smallmatrix}\right):(a ,b)\not=(0,0) \right\}$, represented by $\xi=\left(\begin{smallmatrix}
0 & 0\\
0 &1
\end{smallmatrix}\right)$.
\end{itemize}
For $\xi\in \mathrm{Mat}_2(F)$, we denote the stabilizer of $\xi$ by $G_\xi$, i.e., $G_\xi =\{g\in \mathrm{GL}_2(F):\xi g=\xi\}$. Then 
\begin{equation*}
\begin{split}
 \theta_{\psi, 8}^\Phi (\alpha_T( v)(1, h)) = \sum_{\xi\in \mathrm{Mat}_2(F)/\mathrm{GL}_2(F)} \sum_{a\in G_{\xi}\backslash \mathrm{GL}_2(F)} \omega_\psi(\alpha_T(v)(1, h))\Phi(\xi a),
\end{split}
\end{equation*}
and hence $I(\gamma_2;\varphi, \Phi, f_{\Delta(\tau\otimes \chi_T, 2), s})$ is equal to
\begin{equation*}
\begin{split}
     \int\limits_{P_4(F)\backslash \mathrm{Sp}_4(\mathbb{A})}  \int\limits_{N_{\gamma_2}(F)\backslash N_{2, 8}(\mathbb{A})} 
          \sum_{\xi\in \mathrm{Mat}_2(F)/\mathrm{GL}_2(F)} \sum_{a\in G_{\xi}\backslash \mathrm{GL}_2(F)}  \varphi(h)  \omega_\psi(\alpha_T(v)(1, h))\Phi(\xi a)   f (\gamma_2  v(1, h))  dvdh.
\end{split} 
\end{equation*}
After interchanging the summations with the integration over $N_{\gamma_2}(F)\backslash N_{2, 8}(\mathbb{A})$, we obtain
\begin{equation*}
\begin{split}
   \int\limits_{P_4(F)\backslash \mathrm{Sp}_4(\mathbb{A})}  \sum_{\xi\in \mathrm{Mat}_2(F)/\mathrm{GL}_2(F)} \sum_{a\in G_{\xi}\backslash \mathrm{GL}_2(F)}  \int\limits_{N_{\gamma_2}(F) \backslash N_{2, 8}(\mathbb{A})}   \varphi(h) \omega_\psi(\alpha_T(v)(1, h))\Phi(\xi a)   f (\gamma_2  v(1, h))  dvdh.
\end{split} 
\end{equation*}
We further interchange the summation over $\mathrm{Mat}_2(F)/\mathrm{GL}_2(F)$ with the integration over $P_4(F)\backslash \mathrm{Sp}_4(\mathbb{A})$, to get
\begin{equation*}
\begin{split}
I(\gamma_2;\varphi, \Phi, f_{\Delta(\tau\otimes \chi_T, 2), s})=  \sum_{\xi\in \mathrm{Mat}_2(F)/\mathrm{GL}_2(F)} I(\gamma_2, \xi;\varphi, \Phi, f_{\Delta(\tau\otimes \chi_T, 2), s})     ,
\end{split} 
\end{equation*}
where $I(\gamma_2, \xi;\varphi, \Phi, f_{\Delta(\tau\otimes \chi_T, 2), s})$ is equal to
\begin{equation*}
\begin{split}
 \int\limits_{P_4(F)\backslash \mathrm{Sp}_4(\mathbb{A})}    \sum_{a\in G_{\xi}\backslash \mathrm{GL}_2(F)}        \int\limits_{N_{\gamma_2}(F)\backslash N_{2, 8}(\mathbb{A})} \varphi(h) \omega_\psi(\alpha_T(v)(1, h))\Phi(\xi a)   f (\gamma_2  v(1, h))  dvdh.
 \end{split}
\end{equation*}

Next, we prove the following statement.
\begin{proposition}
$I(\gamma_2, \xi;\varphi, \Phi, f_{\Delta(\tau\otimes \chi_T, 2), s})$ vanishes for all $\xi$ except when $\xi\in \mathrm{GL}_2(F)$.
\label{prop-global-unfolding-I(gamma_2, xi)-vanishes}
\end{proposition}

\begin{proof}
First, we show that $I(\gamma_2, 0;\varphi, \Phi, f_{\Delta(\tau\otimes \chi_T, 2), s})=0$. The stabilizer of $\xi=0$ is the full group $\mathrm{GL}_2(F)$. We see that $I(\gamma_2, 0;\varphi, \Phi, f_{\Delta(\tau\otimes \chi_T, 2), s})$ is equal to
\begin{equation*}
\begin{split}
  \int\limits_{N_4(\mathbb{A})M_4(F)\backslash \mathrm{Sp}_4(\mathbb{A})}    \int\limits_{[N_4]}    \int\limits_{N_{\gamma_2}(F)\backslash N_{2, 8}(\mathbb{A})}    \varphi(n h) \omega_\psi(\alpha_T(v)(1, nh))\Phi(0)   f (\gamma_2  v(1, nh))  dvdndh.
 \end{split}
\end{equation*}
We observe that for any $n\in N_4(\mathbb{A})$, $\Phi^\prime\in \mathcal{S}(\mathrm{Mat}_2(\mathbb{A}))$, $g\in \mathrm{Sp}_8(\mathbb{A})$, we have
\begin{equation*}
\begin{split}
    \omega_\psi \left( (1, n)  \right) \Phi^\prime(0) =\Phi^\prime(0), \, \, 
    f \left(\gamma_2 (1, n)g\right)&=f\left((1, n) \gamma_2 g\right)=f(\gamma_2 g).
\end{split}
\end{equation*}
We write $v=v(x, y, z)$. Then  
\begin{equation*}
\begin{split}
&I(\gamma_2, 0;\varphi, \Phi, f_{\Delta(\tau\otimes \chi_T, 2), s}) \\
=&\int\limits_{N_4(\mathbb{A})M_4(F)\backslash \mathrm{Sp}_4(\mathbb{A})}    \int\limits_{[N_4]}    \int\limits_{N_{\gamma_2}(F)\backslash N_{2, 8}(\mathbb{A})}  \\
&       \varphi(n h) \omega_\psi((1, n) \alpha_T(v(x, y)n, z))(1, h))\Phi(0)   f (  \gamma_2 (1, n)  v((x, y)n, z) (1, h))  dvdndh \\
=&\int\limits_{N_4(\mathbb{A})M_4(F)\backslash \mathrm{Sp}_4(\mathbb{A})}    \int\limits_{[N_4]}    \int\limits_{N_{\gamma_2}(F)\backslash N_{2, 8}(\mathbb{A})}  \\
& \ \ \       \varphi(n h) \omega_\psi( \alpha_T(v(x, y)n, z))(1, h))\Phi(0)   f (  \gamma_2    v((x, y)n, z) (1, h))  dvdndh.
 \end{split}
\end{equation*}
After a change of variables $v(x, y, z)\mapsto v((x, y)n^{-1}, z)$, we see that $I(\gamma_2, 0;\varphi, \Phi, f_{\Delta(\tau\otimes \chi_T, 2), s})$ is equal to
\begin{equation*}
\begin{split}
 \int\limits_{N_4(\mathbb{A})M_4(F)\backslash \mathrm{Sp}_4(\mathbb{A})} &    \int\limits_{N_{\gamma_2}(F)\backslash N_{2, 8}(\mathbb{A})}    \int\limits_{[N_4]}      \varphi(n h) dn \omega_\psi( \alpha_T(v)(1, h))\Phi(0)   f (  \gamma_2    v (1, h))  dvdh.
 \end{split}
\end{equation*}
Note that 
$
\int\limits_{[N_4]}      \varphi(n h) dn=0
$
because $\varphi$ is a cusp form. Hence $I(\gamma_2, 0;\varphi, \Phi, f_{\Delta(\tau\otimes \chi_T, 2), s})=0$.

It remains to prove $ I(\gamma_2, \xi;\varphi, \Phi, f_{\Delta(\tau\otimes \chi_T, 2), s})=0$ for $\xi=\left(\begin{smallmatrix}
0 & 1\\
0 &\lambda
\end{smallmatrix}\right)$ and $\xi=\left(\begin{smallmatrix}
0 & 0\\
0 &1
\end{smallmatrix}\right)$. Let $\xi$ be either $\left(\begin{smallmatrix}
0 & 1\\
0 &\lambda
\end{smallmatrix}\right)$ or $\left(\begin{smallmatrix}
0 & 0\\
0 &1
\end{smallmatrix}\right)$. In both cases, 
$
G_\xi=\left\{ \left(\begin{smallmatrix}
* & *\\
0 &1
\end{smallmatrix}\right)\in \mathrm{GL}_2(F)\right\}
$
is the mirabolic subgroup of $\mathrm{GL}_2(F)$, and 
$
\mathrm{tr}\left( T \  {}^t \xi \left(\begin{smallmatrix}
z_1 & z_2\\
z_3 & z_1
\end{smallmatrix}\right) \xi \right)=0.
$
Similar to the case $I(\gamma_2, 0;\varphi, \Phi, f_{\Delta(\tau\otimes \chi_T, 2), s})$, we observe that for any $n=\left(\begin{smallmatrix} I_2 & z \\ & I_2\end{smallmatrix}\right)\in N_4(\mathbb{A})$ and any $a\in \mathrm{GL}_2(F)$, we have
\begin{equation}
\begin{split}
  \omega_\psi((1, n)  \alpha_T(v)(1, h))\Phi(\xi) &= \omega_\psi( \alpha_T(v)(1, h))\Phi(\xi),
\end{split}
\label{eq-global-unfolding-I(gamma2)-vanishing-1}
\end{equation}
\begin{equation}
\begin{split}
  f (  \gamma_2  (1, n)  g) = f (    (1, n) \gamma_2  g) 
  =f (    \gamma_2  g),
\end{split}
\label{eq-global-unfolding-I(gamma2)-vanishing-2}
\end{equation}
and
\begin{equation}
\begin{split}
  f (  \gamma_2    g) &= f (  (1, \mathrm{diag}(a, a^*))  \gamma_2  g)=f (    \gamma_2 (1, \mathrm{diag}(a, a^*))  g).
  \label{eq-global-unfolding-I(gamma2)-vanishing-3}
\end{split}
\end{equation}
We also note that 
\begin{equation}
    \omega_\psi(\alpha_T(v)(1, h))\Phi(\xi a) =  \omega_\psi( (1, \hat{a}) \alpha_T(v)(1, h))\Phi(\xi),
\label{eq-global-unfolding-I(gamma2)-vanishing-4}
\end{equation}
where $\hat{a}=\mathrm{diag}(a, a^*)$. For a subgroup $G\subset \mathrm{GL}_2(F)$, we denote by $\widehat{G}=\left\{ \mathrm{diag}(a, a^*): a\in G\right\}$ its embedding into $\mathrm{Sp}_4(F)$. Using \eqref{eq-global-unfolding-I(gamma2)-vanishing-4}, we see that $I(\gamma_2, \xi;\varphi, \Phi, f_{\Delta(\tau\otimes \chi_T, 2), s})$ is equal to
\begin{equation*}
\begin{split}
   \int\limits_{P_4(F)\backslash \mathrm{Sp}_4(\mathbb{A})}    \sum_{a\in G_{\xi}\backslash \mathrm{GL}_2(F)}        \int\limits_{N_{\gamma_2}(F) \backslash N_{2, 8}(\mathbb{A})}  \varphi(h) \omega_\psi((1, \hat{a}) \alpha_T(v)(1, h))\Phi(\xi)   f (\gamma_2  v(1, h))  dvdh.
 \end{split}
\end{equation*}
We then make a change of variables $v(x, y, z)\mapsto v((x, y)\hat{a}, z)$, and use \eqref{eq-global-unfolding-I(gamma2)-vanishing-3}, to get that 
\begin{equation*}
\begin{split}
  \int\limits_{P_4(F)\backslash \mathrm{Sp}_4(\mathbb{A})}    \sum_{a\in G_{\xi}\backslash \mathrm{GL}_2(F)}        \int\limits_{N_{\gamma_2}(F)\backslash N_{2, 8}(\mathbb{A})}  \varphi(h) \omega_\psi( \alpha_T(v) (1, \hat{a}) (1, h))\Phi(\xi) 
  f (\gamma_2  v (1, \hat{a})(1, h))  dvdh.
\end{split}
\end{equation*}
Now we use the automorphy of $\varphi$ and then collapse the integral over $P_4(F)\backslash \mathrm{Sp}_4(\mathbb{A})$ with the summation over $G_{\xi}\backslash \mathrm{GL}_2(F)$,
to obtain
\begin{equation*}
\int\limits_{\hat{G_\xi} N_4(F)\backslash \mathrm{Sp}_4(\mathbb{A})}            \int\limits_{N_{\gamma_2}(F)\backslash N_{2, 8}(\mathbb{A})}  \varphi(h) \omega_\psi( \alpha_T(v)   (1, h))\Phi(\xi)   f (\gamma_2  v  (1, h))  dvdh.
\end{equation*}
Note that $N_4(F)$ is a normal subgroup of $\widehat{G_\xi} N_4(F)$. We see that 
\begin{equation*}
\begin{split}
&I(\gamma_2, \xi;\varphi, \Phi, f_{\Delta(\tau\otimes \chi_T, 2), s}) \\
=&   \int\limits_{\widehat{G_\xi} N_4(\mathbb{A})\backslash \mathrm{Sp}_4(\mathbb{A})}   \int\limits_{[N_4]}          \int\limits_{N_{\gamma_2}(F)\backslash N_{2, 8}(\mathbb{A})}  \varphi(n h) \omega_\psi( \alpha_T(v)   (1, n h))\Phi(\xi)   f (\gamma_2  v  (1, n h))  dvdndh \\ 
=& \int\limits_{\widehat{G_\xi} N_4(\mathbb{A})\backslash \mathrm{Sp}_4(\mathbb{A})}   \int\limits_{[N_4]}          \int\limits_{N_{\gamma_2}(F)\backslash N_{2, 8}(\mathbb{A})}  \varphi(n h) \omega_\psi( (1, n) \alpha_T(v ((x, y)n, z))   (1,  h))\Phi(\xi)  \\
& \ \ \ \ \ \ \ \ \ \ \ \ \ \ \ \ \ \ \ \ \ \ \ \ \ f (\gamma_2 (1, n)  v((x, y)n, z)  (1,  h))  dvdndh.
\end{split}    
\end{equation*}
Then we make a change of variables $v(x, y, z)\mapsto v((x, y)n^{-1}, z)$ and use \eqref{eq-global-unfolding-I(gamma2)-vanishing-1} and \eqref{eq-global-unfolding-I(gamma2)-vanishing-2} to obtain that $I(\gamma_2, \xi;\varphi, \Phi, f_{\Delta(\tau\otimes \chi_T, 2), s})$ is equal to
\begin{equation*}
\begin{split}
\int\limits_{\widehat{G_\xi} N_4(\mathbb{A})\backslash \mathrm{Sp}_4(\mathbb{A})}   \int\limits_{[N_4]}          \int\limits_{N_{\gamma_2}(F)\backslash N_{2, 8}(\mathbb{A})}   \varphi(n h) \omega_\psi(  \alpha_T(v )   (1,  h))\Phi(\xi)    f (\gamma_2   v  (1,  h))  dvdndh.
\end{split}    
\end{equation*}
As in the case of $I(\gamma_2, 0;\varphi, \Phi, f_{\Delta(\tau\otimes \chi_T, 2), s})$, we obtain
$
  \int\limits_{[N_4]} \varphi(nh)dn  
$
as an inner integration, which is zero by the cuspidality of $\varphi$. Thus $I(\gamma_2, \xi;\varphi, \Phi, f_{\Delta(\tau\otimes \chi_T, 2), s})=0$.
\end{proof}

\subsubsection{Completion of the proof}
Now we are ready to finish the proof of Theorem~\ref{theorem-global-unfolding}. We have proved that $\mathcal{Z}( \varphi,   \theta_{\psi}^\Phi   , E(\cdot, f)) = I(\gamma_2, I_2;\varphi, \Phi, f_{\Delta(\tau\otimes \chi_T, 2), s})$ for $\mathrm{Re}(s)$ sufficiently large.
Note that for the representative $\xi=I_2$, the embedding of the stabilizer $G_{I_2}$ is $\widehat{G_{I_2}}=\{I_4\}$. We see that $\mathcal{Z}( \varphi,   \theta_{\psi}^\Phi   , E(\cdot, f))$ is equal to (for $\mathrm{Re}(s)$ sufficiently large)
\begin{equation*}
\begin{split}
     \int\limits_{N_4(\mathbb{A})\backslash \mathrm{Sp}_4(\mathbb{A})}     \int\limits_{[N_4]}       \int\limits_{N_{\gamma_2}(F)\backslash N_{2, 8}(\mathbb{A})}  \varphi( n h) \omega_\psi(\alpha_T(v)(1,  n h))\Phi(I_2)    f (\gamma_2  v(1, n h))  dvdndh.
\end{split}
\end{equation*}
Note that for $n=\left(\begin{smallmatrix} I_2 & z \\ & I_2\end{smallmatrix}\right)\in N_4(\mathbb{A})$, we have
\begin{equation}
\begin{split}
  \omega_\psi((1, n)  \alpha_T(v)(1, h))\Phi(I_2) &= \psi \left( \mathrm{tr}\left( T z \right)\right)   \omega_\psi( \alpha_T(v)(1, h))\Phi(I_2).
\end{split}
\label{eq-global-unfolding-Weil-action-1}
\end{equation}
Making a change of variables $v(x, y, z)\mapsto v((x, y)n^{-1}, z)$ and using \eqref{eq-global-unfolding-I(gamma2)-vanishing-2} and  \eqref{eq-global-unfolding-Weil-action-1}, we see that $\mathcal{Z}( \varphi,   \theta_{\psi}^\Phi   , E(\cdot, f))$ is equal to (for $\mathrm{Re}(s)$ sufficiently large)
\begin{equation*}
\begin{split}
     \int\limits_{N_4(\mathbb{A})\backslash \mathrm{Sp}_4(\mathbb{A})}       \int\limits_{N_{\gamma_2} (\mathbb{A})\backslash N_{2, 8}(\mathbb{A})} \int\limits_{[N_{\gamma_2}]}  \varphi_{\psi, T}( h) \omega_\psi(\alpha_T(v_0) \alpha_T(v)(1,   h))\Phi(I_2)   f (\gamma_2 v_0  v(1,  h))  dv_0 dvdh.
\end{split}
\end{equation*}
Recall that 
\begin{equation*}
    N_{\gamma_2}=\left\{ v(0,y, 0)\in N_{2,8}\right\}, N_{2,8}^0=\left\{ v(x,0,z)\in N_{2,8}\right\}.
\end{equation*}
For fixed $v\in N_{2,8}(\mathbb{A})$ and $h\in \mathrm{Sp}_4(\mathbb{A})$, we have
\begin{equation*}
\begin{split}
    &\int\limits_{[N_{\gamma_2}]} \omega_\psi(\alpha_T(v_0) \alpha_T(v)(1,   h))\Phi(I_2)   f (\gamma_2 v_0  v(1,  h)) dv_0 \\
   = &  \omega_\psi( \alpha_T(v)(1,   h))\Phi(I_2) \int\limits_{\mathrm{Mat}_2(F)\backslash \mathrm{Mat}_2(\mathbb{A})}    f \left(    \left(\begin{smallmatrix}&I_2&&\\I_2 &&&\\&&&I_2\\&&I_2&
     \end{smallmatrix}\right) \gamma_2 \left(\begin{smallmatrix} I_2 &&y&\\ &I_2 &&y^*\\ &&I_2&\\ &&&I_2\end{smallmatrix}\right)   v(1,  h) \right)    \psi(\mathrm{tr}(2Ty)) dy\\
                = &  \omega_\psi( \alpha_T(v)(1,   h))\Phi(I_2) \int\limits_{\mathrm{Mat}_2(F)\backslash \mathrm{Mat}_2(\mathbb{A})} f \left(     \left(\begin{smallmatrix} I_2 &y&&\\ &I_2 &&\\ &&I_2&y^*\\ &&&I_2\end{smallmatrix}\right) \gamma   v(1,  h) \right) \psi^{-1}(\mathrm{tr}(2Ty)) dy \\
                =&  \omega_\psi( \alpha_T(v)(1,   h))\Phi(I_2) f_{\mathcal{W}(\tau\otimes \chi_T, 2, \psi_{2T}), s}(\gamma v (1, h)).
\end{split}
\end{equation*}
We have completed the proof of Theorem~\ref{theorem-global-unfolding}.

\section{The unramified computation}
\label{section-Unramified computation}

Recall that by Theorem~\ref{theorem-global-unfolding}, for $\mathrm{Re}(s)\gg 0$, the global integral $\mathcal{Z}( \varphi,   \theta_{\psi}^\Phi   , E^{*, S}(\cdot, f_{\Delta(\tau\otimes \chi_T, 2), s}))$ unfolds to (see \eqref{eq-thm-global-integral-unfolding-final})
\begin{equation*}
\begin{split}
 \int\limits_{ N_4(\mathbb{A})  \backslash  \mathrm{Sp}_4(\mathbb{A})}     \int\limits_{N_{2,8}^0(\mathbb{A})}      \varphi_{\psi, T}(h) \omega_\psi\left( \alpha_T(v)(1,h)\right) )\Phi(I_2 )     f^*_{\mathcal{W}(\tau\otimes \chi_T, 2, \psi_{2T}), s}(\gamma v(1, h) )      dv dh.
\end{split}
\end{equation*}
We remark that,
if $f_{\Delta(\tau\otimes \chi_T, 2), s}$ is decomposable, then 
by \cite[Theorem 13]{CaiFriedbergGinzburgKaplan2019} and \cite[Theorem 4]{CaiFriedbergGourevitchKaplan2023}, we can write
\begin{equation*}
  f_{\mathcal{W}(\tau\otimes \chi_T, 2, \psi_{2T}), s}(g) = \prod_{\nu} f_{\mathcal{W}(\tau_\nu\otimes \chi_{T}, 2, \psi_{2T}), s}(g_\nu), 
\end{equation*}
where $f_{\mathcal{W}(\tau_\nu\otimes \chi_{T}, 2, \psi_{2T}), s}\in \mathrm{Ind}_{P_8(F_\nu)}^{\mathrm{Sp}_8(F_\nu)}(\mathcal{W}(\tau_\nu\otimes\chi_{T}, 2, \psi_{2T})\vert \det \vert^s)$. 
We caution the reader that we have used $\psi_{2T}$ (respectively, $\chi_T$) for both the global additive (respectively, multiplicative) character and its local component. We hope that the meanings of these characters are evident from the context. 
In general, the Fourier coefficient $\varphi_{\psi, T}$ does not factor into an Euler product, due to the lack of uniqueness of corresponding linear functionals on the space of representations. However, there is still a technique, developed by  Piatetski-Shapiro and Rallis
\cite{Piatetski-ShapiroRallis1988}, to handle this situation.

In this section, we carry out the local unramified computation. The local unramified integral at a finite unramified place $\nu$ takes the form
\begin{equation*}
\begin{split}
 \int\limits_{N_4({F_\nu})\backslash \mathrm{Sp}_4({F_\nu})}   \int\limits_{N_{2,8}^0({F_\nu})}  l_{T,\nu}({\pi_\nu}(h)v_0)    \omega_{{\psi}}\left( \alpha_T(v)(1,h)\right) )\Phi^0_\nu(I_2 )
       f^*_{\mathcal{W}({\tau} \otimes\chi_{T}, 2, \psi_{2T}), s}( \gamma v (1, h) )    dv     dh .
\end{split}
\end{equation*}
This integral is obtained by replacing the global Fourier coefficient $\varphi_{\psi, T}$ in \eqref{eq-thm-global-integral-unfolding-final} by its local analogue $l_{T, \nu}(\pi_\nu(h)v_0)$ where $v_0$ is an unramified vector in $V_{\pi_\nu}$ and $l_{T,\nu}$ is a linear functional on $V_{\pi_\nu}$ satisfying the analogous equivariance property as $\varphi_{\psi, T}$, and by replacing all the other terms in \eqref{eq-thm-global-integral-unfolding-final} by their local counterparts (note that all the other terms are factorizable). We assume $\nu\nmid 2, 3$.

Henceforth until the end this section, 
we drop the reference to the local place $\nu$ to ease notation. Throughout Section~\ref{section-Unramified computation}, we let $F$ be a non-archimedean local field of characteristic zero with ring $\mathcal{O}_F$ of integers, with a fixed  uniformizer $\varpi$. Let $q$ be the cardinality of the residue field. We assume that the residual characteristic is not equal to $2$ or $3$. The absolute value $\vert\cdot \vert$ on $F$ is normalized so that $\vert\varpi\vert=q^{-1}$. We fix a  non-trivial additive unramified character $\psi$ of $F$. For any $a\in F^\times$, the character $\psi_a$ is defined by $\psi_a(x)=\psi(ax)$. The local Hilbert symbol is denoted by $(\ , \ )_F$. Let $T_0=\mathrm{diag}(t_1, t_2)$, $T=J_2 T_0$, and we assume that the diagonal coordinates of $T_0$ are in $\mathcal{O}_F^\times$. The character $\chi_T$ is the quadratic character on $F^\times$ defined by $\chi_T(a)=(a, \det(T))_F$. 
Given a character $\chi$ on $F^\times$, we extend it to a character on $\mathrm{GL}_n(F)$ by compositing it with the determinant map, and we still denote it by $\chi$, so that $\chi=\chi\circ \det$ is a character on $\mathrm{GL}_n(F)$.
Let $\Phi^0=\textbf{1}_{\mathrm{Mat}_2(\mathcal{O}_F)}$ be the characteristic function of $\mathrm{Mat}_2(\mathcal{O}_F)$. 
Let $(\pi, V_\pi)$ be an irreducible admissible unramified representation of $\mathrm{Sp}_4(F)$, with a non-zero unramified vector $v_0\in V_\pi$. 
Let $(\tau, V_\tau)$ be an irreducible unramified principle series representation $\mathrm{Ind}_{\mathrm{B}_{\mathrm{GL}_2}(F)}^{\mathrm{GL}_2(F)}(\chi_1 \otimes \chi_2)$, where $\chi_1, \chi_2$ are unramified quasi-characters of $F^\times$.

\subsection{Preliminaries for the unramified computation}\label{Preliminaries for the unramified computation}
In this subsection we prove a relation between unramified sections belonging to different induced representations that we will use later. 

The local Speh representation $\Delta(\tau\otimes \chi_T, 2)$ of $\mathrm{GL}_4(F)$ associated to $\tau\otimes\chi_T$ is given by (see \cite[Claim 9]{CaiFriedbergGinzburgKaplan2019})
\begin{equation}
\Delta(\tau\otimes\chi_T, 2)=\mathrm{Ind}_{P_{(2^2)}(F)}^{\mathrm{GL}_4(F)}(\chi_1 \chi_T \otimes \chi_2 \chi_T).
\label{eq-local-Speh-representation-GL4}
\end{equation}
It is the unique irreducible quotient of 
\begin{equation*}
\mathrm{Ind}_{P_{(2^2)}(F)}^{\mathrm{GL}_4(F)} (( (\tau\otimes\chi_T)\otimes (\tau\otimes\chi_T) )\delta_{P_{(2^2)}}^{1/4})
\end{equation*}
and the unique irreducible subrepresentation of 
\begin{equation*}
\mathrm{Ind}_{P_{(2^2)}(F)}^{\mathrm{GL}_4(F)} (( (\tau\otimes\chi_T)\otimes (\tau\otimes\chi_T) )\delta_{P_{(2^2)}}^{-1/4}).
\end{equation*}
This representation affords a unique non-zero linear functional $\Lambda$ on $\Delta(\tau\otimes \chi_T, 2)$ such that 
\begin{equation}
\label{eq-local-Speh-linear-functional}
\Lambda\left(\Delta(\tau\otimes \chi_T, 2)\left(\begin{smallmatrix}
I_2 &Y\\
&I_2
\end{smallmatrix}\right) \xi \right) =\psi(\mathrm{tr}(Y))\Lambda(\xi).
\end{equation}
Put $w_{2,2}=\left(\begin{smallmatrix} & I_2\\I_2 & \end{smallmatrix}\right)\in \mathrm{GL}_4.$
Then $\Lambda$ can be realized by the Jacquet integral (see  \cite[(3.4)]{CaiFriedbergGinzburgKaplan2019})
\begin{equation}
\xi  \mapsto  \int\limits_{U_{(2^2)}(F)} \xi(w_{2,2} u)\psi^{-1}(u)du.
\label{eq-local-Speh-linear-functional-explicit-GL4-psi}
\end{equation}

\begin{remark}
\label{remark-Jacquet-integral}
Since the integral \eqref{eq-local-Speh-linear-functional-explicit-GL4-psi} may not converge absolutely, we provide some explanation. 
Twisting the inducing data using auxiliary complex parameters, i.e., replacing $\chi_i\chi_T$ by $|\cdot|^{\zeta_i}\chi_i\chi_T$ with $\zeta_i\in \mathbb{C}$ for $1\le i\le 2$, there is a cone where the integral \eqref{eq-local-Speh-linear-functional-explicit-GL4-psi} is absolutely convergent. One can also choose data such that \eqref{eq-local-Speh-linear-functional-explicit-GL4-psi} is absolutely convergent and equals 1, for all choices of $\zeta_i$. Since the space of linear functionals on $\Delta(\tau\otimes \chi_T, 2)$ satisfying \eqref{eq-local-Speh-linear-functional} is one-dimensional, by invoking Bernstein's continuation principle (in \cite{Banks1998}) one sees that \eqref{eq-local-Speh-linear-functional-explicit-GL4-psi} admits analytic continuation in the parameters $\zeta_i$, and it also follows that it is a nonzero functional on $\Delta(\tau\otimes \chi_T, 2)$ for all $\zeta_i$, in particular  when setting $\zeta_1=\zeta_2=0$. We refer the reader to \cite[pp. 1020]{CaiFriedbergGinzburgKaplan2019} for more details. 
\end{remark}

The unique model $\mathcal{W}( \tau\otimes \chi_T, 2, \psi)$ corresponding to the linear functionals satisfying \eqref{eq-local-Speh-linear-functional}
consists of complexed-valued functions $W_\xi$ on $\mathrm{GL}_4(F)$ of the form 
\begin{equation*}
    W_\xi(g)=\Lambda( \Delta(\tau\otimes \chi_T, 2)(g)\xi)
\end{equation*} where $\xi\in \Delta(\tau\otimes \chi_T, 2)$. To simplify notation, we will also write $g\cdot \xi$ for $\Delta(\tau\otimes \chi_T, 2)(g)\xi$.

Let $\psi_{2T}$ be the character on $U_{(2^2)}(F)=\left\{ \left(\begin{smallmatrix} I_2 & Y \\&I_2\end{smallmatrix}\right)\in \mathrm{GL}_4(F)\right\}$ defined by
$
\psi_{2T} \left(\begin{smallmatrix} I_2 & Y \\&I_2\end{smallmatrix}\right) =\psi(\mathrm{tr}(2TY)). $
We also have a unique model $\mathcal{W}( \tau\otimes \chi_T, 2, \psi_{2T})\subset  \mathrm{Ind}_{U_{(2^2)}(F)}^{\mathrm{GL}_4(F)}(\psi_{2T})$, corresponding to the linear functional
\begin{equation}
\xi  \mapsto  \int\limits_{U_{(2^2)}(F)} \xi(w_{2,2} u)\psi_{2T}^{-1}(u)du.
\label{eq-local-Speh-linear-functional-explicit-GL4-psi-2T}
\end{equation}
More generally, for any $g_1, g_2\in \mathrm{GL}_2(F)$, we define the character $\psi_{2g_1 T g_2^{-1}}$ on $U_{(2^2)}(F)$ by
\begin{equation*}
   \psi_{2g_1 T g_2^{-1}}\left(\begin{smallmatrix}I_2 &Y\\ &I_2\end{smallmatrix}\right)=\psi(\mathrm{tr}(2g_1 T g_2^{-1}Y)),
\end{equation*}
and we get a unique model $\mathcal{W}( \tau\otimes \chi_T, 2, \psi_{2g_1 T g_2^{-1}})$ corresponding to the character  $\psi_{2g_1 T g_2^{-1}}$ and the linear functional
\begin{equation}
\label{eq-local-Speh-linear-functional-explicit-GL4-psi-g1g2}
 \xi  \mapsto  \int\limits_{U_{(2^2)}(F)} \xi( w_{2,2}  u)\psi_{2g_1 T g_2^{-1}}^{-1}(u)du.
\end{equation}
The integrals \eqref{eq-local-Speh-linear-functional-explicit-GL4-psi-2T} and \eqref{eq-local-Speh-linear-functional-explicit-GL4-psi-g1g2} are understood in a similar way as \eqref{eq-local-Speh-linear-functional-explicit-GL4-psi}, see Remark~\ref{remark-Jacquet-integral}.

We have the following result. 

\begin{lemma}
Let $W_\xi\in \mathcal{W}(\tau\otimes\chi_T, 2, \psi_{2T})$ be the function corresponding to $\xi\in \Delta(\tau\otimes\chi_T, 2)$ and the linear functional given by \eqref{eq-local-Speh-linear-functional-explicit-GL4-psi-2T}. Let $g_1, g_2\in \mathrm{GL}_2(F)$. There exists some $W^{g_1,g_2}_\xi\in  \mathcal{W}(\tau\otimes\chi_T, 2, \psi_{2g_2^{-1}T g_1 })$ corresponding to $\xi$
such that for any $g\in \mathrm{GL}_4(F)$, we have
\begin{equation*}
W_\xi\left( \left(\begin{smallmatrix}g_1 &\\
 &g_2\end{smallmatrix}\right) g    \right)=\left \vert \frac{\det(g_1)}{\det(g_2)} \right \vert  \chi_1(\det(g_2)) \chi_T(\det(g_2)) \chi_2(\det(g_1)) \chi_T(\det(g_1)) W^{g_1, g_2}_\xi(g).
\end{equation*}
Moreover, if $W_\xi$ is unramified (i.e., $\xi$ is unramified), then $W_\xi^{g_1, g_2}$ is also unramified. 
\label{lemma-local-Speh-model-equivariance-GL4-different-characters}
\end{lemma}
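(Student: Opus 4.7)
The plan is a direct manipulation of the integral~\eqref{eq-local-Speh-linear-functional-explicit-GL4-psi-2T}. Starting from
$$W_\xi(\mathrm{diag}(g_1, g_2)\, g) = \int_{\mathrm{Mat}_2(F)} \xi\!\left(w_{2,2}\begin{pmatrix} I_2 & Y \\ & I_2\end{pmatrix}\mathrm{diag}(g_1, g_2)\, g\right)\psi^{-1}(\mathrm{tr}(2TY))\, dY,$$
I would commute $\mathrm{diag}(g_1, g_2)$ first past the upper unipotent (which conjugates $Y$ into $g_1^{-1}Yg_2$) and then past $w_{2,2}$, which swaps the two diagonal blocks and produces $\mathrm{diag}(g_2, g_1)$ on the left of $w_{2,2}$. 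Invoking the explicit form~\eqref{eq-local-Speh-representation-GL4} together with the normalization $\delta_{P_{(2^2)}}^{1/2}(\mathrm{diag}(a,b)) = |\det a / \det b|$ then lets me evaluate the action of $\xi$ on $\mathrm{diag}(g_2, g_1)$, pulling out the scalar $|\det g_2/\det g_1|\,(\chi_1\chi_T)(\det g_2)(\chi_2\chi_T)(\det g_1)$.

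Next I would make the change of variables $Y \mapsto g_1 Y g_2^{-1}$ on $\mathrm{Mat}_2(F)$. The Jacobian of this linear map is $|\det g_1|^2|\det g_2|^{-2}$, and the character transforms as $\psi^{-1}(\mathrm{tr}(2T g_1 Y g_2^{-1})) = \psi^{-1}(\mathrm{tr}(2 g_2^{-1}T g_1\, Y))$, i.e.\ into $\psi_{2g_2^{-1}Tg_1}^{-1}$. Combining the modular-character factor $|\det g_2/\det g_1|$ with the Jacobian factor $|\det g_1/\det g_2|^2$ collapses to the single $|\det g_1/\det g_2|$ stated in the lemma, and the remaining integral is, by definition, the image of $\xi$ under the functional attached to $\mathcal{W}(\tau\otimes\chi_T, 2, \psi_{2g_2^{-1}Tg_1})$, which is precisely $W_\xi^{g_1, g_2}(g)$.

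For the ``moreover'' statement, I would observe that $W_\xi^{g_1, g_2}$ is constructed from the same underlying section $\xi$ by composing with a different model functional; hence right $\mathrm{GL}_4(\mathcal{O}_F)$-invariance of $\xi$ transfers immediately to right $\mathrm{GL}_4(\mathcal{O}_F)$-invariance of $W_\xi^{g_1, g_2}$, so $W_\xi^{g_1, g_2}$ is unramified whenever $\xi$ is. The calculation is mechanical overall; the only real obstacle I anticipate is the bookkeeping of the two competing $|\det g_i|$ contributions (from $\delta_{P_{(2^2)}}^{1/2}$ and from the Jacobian of the substitution), which must cancel correctly to produce the single power appearing in the statement.
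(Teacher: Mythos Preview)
Your proposal is correct and follows essentially the same route as the paper: commute $\mathrm{diag}(g_1,g_2)$ past the unipotent and then past $w_{2,2}$, pull out the scalar from the induced representation \eqref{eq-local-Speh-representation-GL4}, change variables $Y\mapsto g_1Yg_2^{-1}$, and identify the remaining integral as the functional for the twisted character. The paper performs the substitution before evaluating the induced action (you do it after), but the two steps commute, so the arguments are effectively identical.
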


\begin{proof}
We realize $W_\xi$ through \eqref{eq-local-Speh-linear-functional-explicit-GL4-psi-2T}, i.e., $W_\xi(g)=\Lambda(g \cdot \xi)$ where $\Lambda$ is the linear functional defined in \eqref{eq-local-Speh-linear-functional-explicit-GL4-psi-2T} (we omit the twisting parameters in our following computation). We have
\begin{equation*}
\begin{split}
W_\xi\left( \left(\begin{smallmatrix}g_1 &\\ &g_2\end{smallmatrix}\right) g    \right)   &        =\int\limits_{\mathrm{Mat}_{2}(F)} g\cdot \xi \left( w_{2,2}  \left(\begin{smallmatrix}I_2&Y \\&I_2\end{smallmatrix}\right) \left(\begin{smallmatrix}g_1 &\\&g_2\end{smallmatrix}\right)  \right)\psi^{-1}(\mathrm{tr}(2TY))dY \\
  &             =\int\limits_{\mathrm{Mat}_{2}(F)} g\cdot \xi \left( w_{2,2} \left(\begin{smallmatrix}g_1 &\\&g_2\end{smallmatrix}\right) \left(\begin{smallmatrix}I_2&g_1^{-1}Y g_2\\ &I_2\end{smallmatrix}\right)   \right)\psi^{-1}(\mathrm{tr}(2TY))dY\\
 &             =\int\limits_{\mathrm{Mat}_{2}(F)} g\cdot \xi \left( \left(\begin{smallmatrix}g_2 &\\&g_1\end{smallmatrix}\right) w_{2,2} \left(\begin{smallmatrix}I_2&g_1^{-1}Y g_2\\ &I_2\end{smallmatrix}\right)   \right)\psi^{-1}(\mathrm{tr}(2TY))dY.
\end{split}
\end{equation*}
By a change of variable $Y\mapsto g_1Yg_2^{-1}$, $dY\mapsto  \vert\det(g_1)\vert^2 \vert\det(g_2)\vert^{-2}dY$, we obtain 
\begin{equation*}
\begin{split}
W_\xi\left( \left(\begin{smallmatrix}g_1 &\\ &g_2\end{smallmatrix}\right) g    \right)        =   \left \vert \frac{\det(g_1)}{\det(g_2)} \right \vert^2    \int\limits_{\mathrm{Mat}_{2}(F)} g\cdot \xi \left( \left(\begin{smallmatrix}g_2 &\\&g_1\end{smallmatrix}\right) w_{2,2} \left(\begin{smallmatrix}I_2&Y\\ &I_2\end{smallmatrix}\right)   \right)\psi^{-1}(\mathrm{tr}(2Tg_1 Yg_2^{-1}))dY .
\end{split}
\end{equation*}
Since $g\cdot \xi\in \Delta(\tau\otimes \chi_T, 2)$, we have
\begin{multline*}
g\cdot \xi \left( \left(\begin{smallmatrix}g_2 &\\&g_1\end{smallmatrix}\right)  w_{2,2} \left(\begin{smallmatrix}I_2&Y\\ &I_2\end{smallmatrix}\right)   \right) \\
= \chi_1(\det(g_2))\chi_T(\det(g_2)) \chi_2(\det(g_1))\chi_T(\det(g_1))   \left \vert \frac{\det(g_2)}{\det(g_1)} \right \vert g\cdot \xi \left(  w_{2,2} \left(\begin{smallmatrix}I_2&Y\\ &I_2\end{smallmatrix}\right)   \right).
\end{multline*}
Now take
$$
W_\xi^{g_1, g_2}(g)= \int\limits_{\mathrm{Mat}_{2}(F)} g\cdot \xi \left( w_{2,2} \left(\begin{smallmatrix}I_2&Y \\&I_2\end{smallmatrix}\right)   \right)\psi^{-1}(\mathrm{tr}(2 g_2^{-1} Tg_1 Y))dY .
$$
The result follows. 
\end{proof}

We realize $\mathrm{Ind}_{P_8(F)}^{\mathrm{Sp}_8(F)}(\mathcal{W}( \tau\otimes \chi_T, 2, \psi_{2T})\vert \det \vert ^s)$ in the space  of smooth functions $f_{\mathcal{W}(\tau\otimes \chi_T, 2, \psi_{2T}), s}$ on $\mathrm{Sp}_8(F)$ which take values in the model $\mathcal{W}(\tau\otimes \chi_T, 2,\psi_{2T})$, so we may regard $f_{\mathcal{W}(\tau\otimes \chi_T, 2, \psi_{2T}), s}(m,g)$ as a function of two variables where $m\in \mathrm{GL}_4(F)$, $g\in \mathrm{Sp}_8(F)$,   satisfying
$$
f_{\mathcal{W}(\tau\otimes \chi_T, 2, \psi_{2T}), s}(m, \hat{a} ug)= \delta_{P_8}^{\frac{1}{2}}(\hat{a})   \vert\det(a)\vert^{s} f_{\mathcal{W}(\tau\otimes \chi_T, 2, \psi_{2T}), s}(ma, g),
$$
where $\hat{a}=\mathrm{diag}(a, a^*)\in M_8(F), u\in N_8(F)$.
Here $\delta_{P_8}(\hat{a})=\vert \det(a)\vert^{5}$.
To ease notation, we also re-denote
$
f_{\mathcal{W}(\tau\otimes \chi_T, 2, \psi_{2T}), s}(g)=f_{\mathcal{W}(\tau\otimes \chi_T, 2, \psi_{2T}), s}(I_4, g).
$
Let $f^0_{\mathcal{W}(\tau\otimes \chi_T, 2, \psi_{2T}), s}$ be the unramified section in $\mathrm{Ind}_{P_8(F)}^{\mathrm{Sp}_8(F)}(\mathcal{W}( \tau\otimes \chi_T, 2, \psi_{2T})\vert \det \vert^s)$, such that its value at identity is 
\begin{equation}
\label{eq-local-unramified-section-Sp8-section-at-identity}
    f^0_{\mathcal{W}(\tau\otimes \chi_T, 2, \psi_{2T}), s}(I_8)=\frac{d_{\tau}^{\mathrm{Sp}_{16}}(s)}{d_{\tau\otimes\chi_T}^{\mathrm{Sp}_{8}}(s)},
\end{equation}
where
\begin{equation*}
         d_{\tau}^{\mathrm{Sp}_{16}}(s) =   L( s+\frac{5}{2}, \tau)     \prod_{1\le j \le 2} L(2s+2j, \chi_\tau)L(2s+2j-1, \tau, \mathrm{Sym}^2), 
\end{equation*}
\begin{equation*}
           d_{\tau\otimes \chi_T}^{\mathrm{Sp}_{8}}(s) =L(s+\frac{3}{2}, \tau\otimes\chi_T)L(2s+2, \chi_\tau) L(2s+1, \tau\otimes\chi_T, \mathrm{Sym}^2).
\end{equation*}
This is a meromorphic function in $s$.
Then we have
\begin{equation}
\begin{split}
f^0_{\mathcal{W}(\tau\otimes \chi_T, 2, \psi_{2T}), s}\left(    m u g  \right)
=  \vert \det(a_m)\vert^{s+\frac{5}{2}} f^0_{\mathcal{W}(\tau\otimes \chi_T, 2, \psi_{2T}), s}(a_m, g)      
\label{eq-local-unramified-section-Sp8-property1}
\end{split}
\end{equation}
where $m=\left(\begin{smallmatrix}
a_m &\\
&a_m^*
\end{smallmatrix}\right)\in M_8(F), u\in N_8(F), g\in \mathrm{Sp}_8(F).$
We also have
\begin{equation}
\begin{split}
f^0_{\mathcal{W}(\tau\otimes \chi_T, 2, \psi_{2T}), s}\left(    \left(\begin{smallmatrix}
I_2 &Y & &\\
 & I_2 & &\\
& &I_2 &-J_2 {}^t Y J_2\\
&& & I_2
\end{smallmatrix}\right) g  \right)=\psi(\mathrm{tr}(2 TY))f^0_{\mathcal{W}(\tau\otimes \chi_T, 2, \psi_{2T}), s}\left(   g \right) 
\label{eq-local-unramified-section-Sp8-property3}
\end{split}
\end{equation}
for all $Y\in \mathrm{Mat}_2(F), g\in \mathrm{Sp}_8(F)$, due to the equivariance property of elements in $\mathcal{W}( \tau\otimes \chi_T, 2, \psi_{2T})$. Set 
 \begin{equation} 
 \label{eq-local-unramified-section-Sp8-section-normalized}
 f^*_{\mathcal{W}(\tau\otimes \chi_{T}, 2, \psi_{2T}), s}(g )= d_{\tau\otimes\chi_T}^{\mathrm{Sp}_8}(s) f^0_{\mathcal{W}(\tau\otimes \chi_{T}, 2, \psi_{2T}), s}(g).
 \end{equation}

As a consequence of Lemma \ref{lemma-local-Speh-model-equivariance-GL4-different-characters}, we have the following relation between unramified sections belonging to different induced representations. 
\begin{lemma}
Let $g_1, g_2\in \mathrm{GL}_2(F)$. Then there exists an unramified section $f^0_{\mathcal{W}(\tau\otimes \chi_T, 2, \psi_{2g_2^{-1}Tg_1}), s}$ in $\mathrm{Ind}_{P_8(F)}^{\mathrm{Sp}_8(F)}(\mathcal{W}(\tau\otimes\chi_T, 2, \psi_{2g_2^{-1}Tg_1})\vert \det \vert^s )$, which is determined by the section $f^0_{\mathcal{W}(\tau\otimes \chi_T, 2, \psi_{2T}), s}$, such that, 
for any
 $g\in \mathrm{Sp}_8(F)$, we have
\begin{equation}
\begin{split}
&f^0_{\mathcal{W}(\tau\otimes \chi_T, 2, \psi_{2T}), s} \left(  \left(\begin{smallmatrix}
g_1 &&&\\
&g_2 && \\
&&g_2^*& \\
&&&g_1^*
\end{smallmatrix}\right)g \right)=  \vert \det(g_1)\vert^{s+\frac{7}{2}} \vert \det(g_2)\vert^{s+\frac{3}{2}} \\
& \ \ \ \ \chi_1(\det(g_2))\chi_2(\det(g_1))  \chi_T(\det(g_1 g_2))  f^0_{\mathcal{W}(\tau\otimes \chi_T, 2, \psi_{2g_2^{-1}Tg_1}), s}\left(  g \right).
\end{split}
\label{eq-lemma-local-unramified-section-Sp8-property5}
\end{equation}
\label{lemma-local-unramified-section-Sp8-property5}
\end{lemma}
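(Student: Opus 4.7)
The plan is to combine the Levi equivariance of the induced section \eqref{eq-local-unramified-section-Sp8-property1} with the Whittaker-model identity from Lemma~\ref{lemma-local-Speh-model-equivariance-GL4-different-characters}, and then verify that what remains is again an unramified section in the desired induced space.

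First, observe that $m=\mathrm{diag}(g_1,g_2,g_2^\ast,g_1^\ast)\in M_8(F)$ is the image in $\mathrm{Sp}_8$ of $a_m=\mathrm{diag}(g_1,g_2)\in\mathrm{GL}_4(F)$, so $\lvert\det(a_m)\rvert=\lvert\det(g_1)\det(g_2)\rvert$. Applying \eqref{eq-local-unramified-section-Sp8-property1} yields
\begin{equation*}
f^0_{\mathcal{W}(\tau\otimes\chi_T,2,\psi_{2T}),s}(mg)
=\lvert\det(g_1)\det(g_2)\rvert^{s+\tfrac{5}{2}}\,
W_{\xi(g)}\!\left(\left(\begin{smallmatrix}g_1&\\&g_2\end{smallmatrix}\right)\right),
\end{equation*}
where $W_{\xi(g)}\in\mathcal{W}(\tau\otimes\chi_T,2,\psi_{2T})$ is the Whittaker function attached to the $\Delta(\tau\otimes\chi_T,2)$-vector $\xi(g)$ which is the value of the section at $g$.

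Next, I apply Lemma~\ref{lemma-local-Speh-model-equivariance-GL4-different-characters} to the inner Whittaker value. This produces
\begin{equation*}
W_{\xi(g)}\!\left(\left(\begin{smallmatrix}g_1&\\&g_2\end{smallmatrix}\right)\right)
=\left\lvert\tfrac{\det(g_1)}{\det(g_2)}\right\rvert
\chi_1(\det g_2)\chi_2(\det g_1)\chi_T(\det(g_1g_2))\,
W^{g_1,g_2}_{\xi(g)}(I_4),
\end{equation*}
with $W^{g_1,g_2}_{\xi(g)}\in\mathcal{W}(\tau\otimes\chi_T,2,\psi_{2g_2^{-1}Tg_1})$. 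Combining the two displays and collecting the $\lvert\det(g_1)\rvert$ and $\lvert\det(g_2)\rvert$ powers gives the exponents $s+\tfrac{7}{2}$ and $s+\tfrac{3}{2}$ respectively, matching the right-hand side of \eqref{eq-lemma-local-unramified-section-Sp8-property5}.

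It remains to package $g\mapsto W^{g_1,g_2}_{\xi(g)}(I_4)$ as an unramified section in $\mathrm{Ind}_{P_8(F)}^{\mathrm{Sp}_8(F)}(\mathcal{W}(\tau\otimes\chi_T,2,\psi_{2g_2^{-1}Tg_1})\vert\det\vert^s)$. The construction in the proof of Lemma~\ref{lemma-local-Speh-model-equivariance-GL4-different-characters} shows that the map $\xi\mapsto W^{g_1,g_2}_\xi$ is an intertwining operator from $\Delta(\tau\otimes\chi_T,2)$ to $\mathcal{W}(\tau\otimes\chi_T,2,\psi_{2g_2^{-1}Tg_1})$; composing with the section $g\mapsto\xi(g)$ therefore produces a vector in the desired induced representation, and I define $f^0_{\mathcal{W}(\tau\otimes\chi_T,2,\psi_{2g_2^{-1}Tg_1}),s}(g):=W^{g_1,g_2}_{\xi(g)}(I_4)$. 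Finally, since $f^0_{\mathcal{W}(\tau\otimes\chi_T,2,\psi_{2T}),s}$ is $\mathrm{Sp}_8(\mathcal{O}_F)$-fixed, each value $\xi(g)$ for $g\in\mathrm{Sp}_8(\mathcal{O}_F)$ is the unramified vector in $\Delta(\tau\otimes\chi_T,2)$, and the last assertion of Lemma~\ref{lemma-local-Speh-model-equivariance-GL4-different-characters} guarantees that $W^{g_1,g_2}_{\xi(g)}$ is again unramified, hence the new section is unramified. I do not expect a genuine obstacle; the whole argument is a bookkeeping of characters and $\lvert\cdot\rvert$-powers once the two previous structural results are in hand.
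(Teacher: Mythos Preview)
Your proof is correct and follows essentially the same route as the paper: apply the Levi equivariance \eqref{eq-local-unramified-section-Sp8-property1} to pull out $\lvert\det(g_1g_2)\rvert^{s+5/2}$, then invoke Lemma~\ref{lemma-local-Speh-model-equivariance-GL4-different-characters} on the remaining $\mathrm{GL}_4$-Whittaker value to produce the character factors and the new unramified section. Your additional explicit verification that $g\mapsto W^{g_1,g_2}_{\xi(g)}(I_4)$ defines an unramified section is a welcome elaboration of what the paper leaves implicit.
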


\begin{proof}
We have
\begin{equation*}
\begin{split}
                    f^0_{\mathcal{W}(\tau\otimes \chi_T, 2, \psi_{2T}), s}\left(  \left(\begin{smallmatrix}
g_1 &&&\\
&g_2 && \\
&&g_2^*& \\
&&&g_1^*
\end{smallmatrix}\right)g\right) 
 =&                        \delta_{P_8}^{\frac{1}{2}}\left( \left(\begin{smallmatrix}
g_1 &&&\\
&g_2 && \\
&&g_2^*& \\
&&&g_1^*
\end{smallmatrix}\right) \right)  \left \vert\det \left(\begin{smallmatrix}
g_1&\\
&g_2 \end{smallmatrix}\right)  \right \vert^s f^0_{\mathcal{W}(\tau\otimes \chi_T, 2, \psi_{2T}), s} \left(  \left(\begin{smallmatrix}
g_1 &\\
&g_2 \end{smallmatrix}\right), g \right)\\
=& \vert \det(g_1g_2)\vert^{s+\frac{5}{2}}  f^0_{\mathcal{W}(\tau\otimes \chi_T, 2, \psi_{2T}), s} \left( \left(\begin{smallmatrix}
g_1&\\
&g_2 \end{smallmatrix}\right) , g\right).
\end{split}
\end{equation*}
Then by Lemma  \ref{lemma-local-Speh-model-equivariance-GL4-different-characters}, we see that 
\begin{equation*}
\begin{split}
 f^0_{\mathcal{W}(\tau\otimes \chi_T, 2, \psi_{2T}), s} \left( \left(\begin{smallmatrix}
g_1&\\
&g_2 \end{smallmatrix}\right), g \right) 
=       \left \vert \frac{\det(g_1)}{\det(g_2)} \right \vert  \chi_1(\det(g_2)) \chi_2(\det(g_1))   \chi_T(\det(g_1 g_2))  f^0_{\mathcal{W}(\tau\otimes \chi_T, 2, \psi_{2g_2^{-1}Tg_1}), s}(g),
\end{split}
\end{equation*}
for an unramified section $f^0_{\mathcal{W}(\tau\otimes \chi_T, 2, \psi_{2g_2^{-1}Tg_1}), s}$ in $\mathrm{Ind}_{P_8(F)}^{\mathrm{Sp}_8(F)}(\mathcal{W}(\tau\otimes\chi_T, 2, \psi_{2g_2^{-1}Tg_1})\vert \det \vert^s )$. Then \eqref{eq-lemma-local-unramified-section-Sp8-property5} follows. 
\end{proof}

\subsection{Local generalized doubling integral for $\mathrm{Sp}_4\times\mathrm{GL}_2$} 
\label{subsection-generalized-doubling-integral}
We recall the explicit determination of the local $L$-function $L(s, \pi\times \tau)$ from  \cite{CaiFriedbergGinzburgKaplan2019}. This will be a major tool for the unramified computation. 
We introduce more notations. 

Let $t:\mathrm{Sp}_4(F)\times \mathrm{Sp}_4(F) \to \mathrm{Sp}_{16}(F)$ be the embedding given by
\begin{equation*}
(g_1, g_2) \mapsto t(g_1, g_2)= \left(\begin{smallmatrix}
g_1 & & & & \\
      &g_{1,1} & &g_{1,2} & \\
      & &g_2 & &\\
      &g_{1,3} & &g_{1,4} & \\
      & & & &J_4 {}^t g_1^{-1} J_4
\end{smallmatrix}\right)
\label{eq-Sp4xSp4 embedding}
\end{equation*}
where $g_1=\left(\begin{smallmatrix}
g_{1,1} & g_{1,2}\\
g_{1,3} & g_{1,4}
\end{smallmatrix}\right)\in \mathrm{Sp}_4(F), g_2\in \mathrm{Sp}_4(F)$. In particular, for $g\in \mathrm{Sp}_4(F)$,
we have 
$
t(1, g)= \left(\begin{smallmatrix}
I_6 &&\\
&g&\\
&&I_6
\end{smallmatrix}\right).
$
For $g\in \mathrm{Sp}_4(F)$, we denote
$
    g^\iota=\left(\begin{smallmatrix}&I_2\\I_2 &\end{smallmatrix}\right) g \left(\begin{smallmatrix}&I_2\\I_2 &\end{smallmatrix}\right).
$
Note that if we write $g=\left(\begin{smallmatrix}
g_{1} & g_{2}\\
g_{3} & g_{4}
\end{smallmatrix}\right)\in \mathrm{Sp}_4(F)$, then  $g^\iota=\left(\begin{smallmatrix}g_{4} & g_{3}\\
g_{2}&g_{1}\end{smallmatrix}\right)$. An elementary matrix calculation shows that for $g_1, g_2\in \mathrm{Sp}_4(F)$, we have
$
t(1, g_1 g_2) t(g_1^\iota , 1) =t(g_1^\iota, g_1) t(1, g_2).
$

We let $\delta\in \mathrm{Sp}_{16}(F)$ be 
\begin{equation*}
\begin{split}
\delta=\left(\begin{smallmatrix}
0 & I_4 &0&0\\
0 &0&0& I_4\\
-I_4&0&0&0\\
0&I_4 &I_4&0
\end{smallmatrix}\right).
\end{split}
\end{equation*}
Let $N_{4,16}^0(F)$ be the subgroup of $N_{4,16}(F)$ given by
\begin{equation}
\begin{split}
&N_{4,16}^0(F)=\left\{     \left( \begin{smallmatrix} I_4 & x & &z\\ & I_4& &\\&&I_4 & x^* \\&&&I_4\end{smallmatrix}\right)\right\} 
\subset   N_{4,16}(F)=\left\{   
 \left( \begin{smallmatrix} I_4 & x &y &z\\ & I_4& &y^\prime\\&&I_4 & x^\prime \\&&&I_4\end{smallmatrix}\right)   \in \mathrm{Sp}_{16}(F)     \right\},
\end{split}
\label{eq-unipotent-subgroup-Sp16}
\end{equation}
realizing the quotient $\delta^{-1}P_{16}(F)\delta\cap N_{4,16}(F)\backslash N_{4,16}(F)$.
 The character $\psi_{N_{4,16}}$ on $N_{4,16}(F)$ is defined as 
\begin{equation}
\psi_{N_{4,16}}(u(x, y, z))=\psi(x_{11}+x_{22}+y_{33}+y_{44}), \ \ x=(x_{ij}), y=(y_{ij})\in \mathrm{Mat}_{4}(F),
\end{equation}
where we write $u(x, y, z)\in N_{4,16}(F)$ as in \eqref{eq-unipotent-subgroup-Sp16}.

Let $\Delta(\tau, 4)$ be the local Speh representation of $\mathrm{GL}_8(F)$, given by
\begin{equation}
\Delta(\tau, 4)=\mathrm{Ind}_{P_{(4^2)}(F)}^{\mathrm{GL}_8(F)}(\chi_1 \otimes \chi_2),
\label{eq-local-Speh-representation-GL8}
\end{equation}
which affords a unique model $\mathcal{W}(\tau, 4, \psi^{-1})$ with respect to the unipotent subgroup $U_{(4^2)}(F)=\left\{ \left(\begin{smallmatrix} I_4 & x \\&I_4\end{smallmatrix}\right)\in \mathrm{GL}_8(F)\right\}$ and the character on $U_{(4^2)}(F)$ given by 
$
    \left(\begin{smallmatrix}I_4 &x\\&I_4\end{smallmatrix}\right)\mapsto \psi^{-1}(\mathrm{tr}(x)).
$
For $\xi\in \Delta(\tau, 4)$, we let $W_\xi\in \mathcal{W}(\tau, 4, \psi^{-1})$ be the function on $\mathrm{GL}_8(F)$ associated with the vector $\xi$, hence it satisfies the  equivariance property
$$
W_\xi\left( \left(\begin{smallmatrix}
I_4 & x\\
&I_4
\end{smallmatrix}\right) g  \right)=\psi^{-1}(\mathrm{tr}(x))W_\xi(g).
$$

We let $f_{\mathcal{W}(\tau, 4, \psi^{-1}), s}^0\in \mathrm{Ind}_{P_{16}(F)}^{\mathrm{Sp}_{16}(F)}(\mathcal{W}(\tau, 4, \psi^{-1})\vert \det \vert^s)$ be the unramified section, which is normalized, such that, for $a\in \mathrm{GL}_8(F)$, we have
$
    f_{\mathcal{W}(\tau, 4, \psi^{-1}), s}^0(a;I_{16})=W_{\Delta(\tau, 4)}^0(a),
$
where $W_{\Delta(\tau, 4)}^0$ is the unique unramified function in $\mathcal{W}(\tau, 4, \psi^{-1})$ such that its value at the identity is 1. We re-denote $f_{\mathcal{W}(\tau, 4, \psi^{-1}), s}^0(g)=f_{\mathcal{W}(\tau, 4, \psi^{-1}), s}^0(I_{8};g)$.
The section $f_{\mathcal{W}(\tau, 4, \psi^{-1}), s}^0$ satisfies the property that
\begin{equation}
\begin{split}
f_{\mathcal{W}(\tau, 4, \psi^{-1}), s}^0\left( \left(\begin{smallmatrix}I_4 & x & &\\
&I_4 &&\\
&&I_4 &-J_4 {}^t x J_4\\
&&&I_4\end{smallmatrix}\right) g\right) =\psi^{-1}(\mathrm{tr}(x)) f_{\mathcal{W}(\tau, 4, \psi^{-1}), s}^0\left( g\right).
\end{split}
\label{eq-local-unramified-Sp16-equivariant-property3}
\end{equation}
Note that  (see \cite[Proposition 24]{CaiFriedbergGinzburgKaplan2019} or \cite[Lemma 1.1]{CaiFriedbergKaplan2022}) for any $g_0\in \mathrm{SL}_4(F)$ and $g\in \mathrm{Sp}_{16}(F)$, we have
 \begin{equation}
  f^0_{\mathcal{W}(\tau, 4, \psi^{-1}), s}\left(  \mathrm{diag}(g_0, g_0, g_0^*, g_0^*) g
   \right)   =   f^0_{\mathcal{W}(\tau, 4, \psi^{-1}), s}\left( g \right).
 \label{eq-local-unramified-Sp16-equivariant-property2}
\end{equation}
Moreover, for any $h_0\in \mathrm{Sp}_4(F)$, we have
\begin{multline}
\int\limits_{N_{4,16}^0(F)}   f^0_{\mathcal{W}(\tau, 4, \psi^{-1}), s}\left( 
    \delta u_0 t\left(h_0 g, h_0^{\iota} h \right) \right)\psi^{-1}_{N_{4,16}}(u_0)du_0   
    =  \int\limits_{N_{4,16}^0(F)}   f^0_{\mathcal{W}(\tau, 4, \psi^{-1}), s}\left( \delta u_0 t\left( g, h \right) \right)\psi^{-1}_{N_{4,16}}(u_0)du_0 ;
 \label{eq-local-unramified-Sp16-equivariant-property1}
\end{multline}
see \cite[p. 1029]{CaiFriedbergGinzburgKaplan2019} or \cite[(2.7)]{GinzburgSoudry2020}. This equality is valid in the domain where both integrals are absolutely convergent and in general by meromorphic continuation. 
Denote
\begin{equation}
\label{eq-local-unramified-section-Sp16-section-normalized}
    f_{\mathcal{W}(\tau, 4, \psi^{-1}), s}^*(g)=d_{\tau}^{\mathrm{Sp}_{16}}(s)f_{\mathcal{W}(\tau, 4, \psi^{-1}), s}^0(g).
\end{equation}
Then by \eqref{eq-local-unramified-section-Sp8-section-normalized} and \eqref{eq-local-unramified-section-Sp16-section-normalized} we have
\begin{equation}
\label{eq-local-unramified-sections-equal-at-identity}
    f_{\mathcal{W}(\tau, 4, \psi^{-1}), s}^*(I_{16})=f^*_{\mathcal{W}(\tau\otimes \chi_{T}, 2, \psi_{2T}), s}(I_8 ).
\end{equation}

We write down the unramified local integral of the generalized doubling method.

\begin{theorem}(Cai-Friedberg-Ginzburg-Kaplan \cite[Theorem 29]{CaiFriedbergGinzburgKaplan2019})
Let $\omega^0_{\pi}$ be the unramified matrix coefficient of $\pi$, normalized so that $\omega^0_{\pi}(I_4)=1$. Then for $\mathrm{Re}(s)\gg 0$, we have
\begin{equation}
\int\limits_{\mathrm{Sp}_4(F)} \int\limits_{N_{4,16}^0(F)} \omega^0_{\pi}(h)   f^*_{\mathcal{W}(\tau, 4, \psi^{-1}), s}(\delta u_0 t(1, h) )\psi_{N_{4,16}}^{-1}(u_0)du_0      dh
= L(s+\frac{1}{2}, \pi\times \tau) .
\label{eq-Local L function - Twisted Doubling}
\end{equation}
\label{thm-Local L-function-Twisted Doubling}
\end{theorem}

\begin{remark}
We refer the reader to \cite[Proposition 4.8]{GinzburgSoudry2021} for more versions of explicit local integrals, all of which can be deduced from \cite[Theorem 29]{CaiFriedbergGinzburgKaplan2019}. 
\end{remark}

\subsection{Proof of the unramified computation}

In this subsection, we prove the following unramified computation. 
All forthcoming manipulations are justified for $\mathrm{Re}(s)\gg 0$.

\begin{theorem}[Unramified computation]
Suppose that ${\pi}$ is an irreducible unramified representation of $\mathrm{Sp}_4({F})$, ${\tau}$ is an irreducible unramified generic representation of  $\mathrm{GL}_2({F})$, $\Phi^0=\textbf{1}_{\mathrm{Mat}_2(\mathcal{O}_{F})}$ is the characteristic function of $\mathrm{Mat}_2(\mathcal{O}_{F})$, and $f^*_{\mathcal{W}({\tau} \otimes\chi_{T}, 2, \psi_{2T}), s}$ is the normalized unramified  section in $\mathrm{Ind}_{P_8({F})}^{\mathrm{Sp}_8({F})}(\mathcal{W}( {\tau}\otimes \chi_{T}, 2, \psi_{2T})\vert \det \vert^s)$ defined in \eqref{eq-local-unramified-section-Sp8-section-normalized}.
Let $v_0\in V_{{\pi}}$ be a non-zero unramified vector in $V_{{\pi}}$. Let $l_{T}:V_{\pi}\to \mathbb{C}$ be a linear functional on $V_{{\pi}}$ such that 
\begin{equation}
l_{T}\left( {\pi} \left(\begin{smallmatrix}
I_2 &z\\
& I_2
\end{smallmatrix}\right) v \right)={\psi}^{-1}(\mathrm{tr}(Tz))l_{T}(v), \  \  \text{ for all } v\in V_{{\pi}}, z\in \mathrm{Mat}_2^0({F}).
\label{eq-l_T}
\end{equation}
We denote
\begin{equation*}
\begin{split}
    \mathcal{Z}^*(l_{T}, s) =  \int\limits_{N_4({F})\backslash \mathrm{Sp}_4({F})}   \int\limits_{N_{2,8}^0({F})}  l_{T}({\pi}(h)v_0)    \omega_{{\psi}}\left( \alpha_T(v)(1,h)\right) )\Phi^0(I_2 )
       f^*_{\mathcal{W}({\tau} \otimes\chi_{T}, 2, \psi_{2T}), s}( \gamma v (1, h) )    dv     dh .
\end{split}
\end{equation*}
The integral converges absolutely for $\mathrm{Re}(s)$ sufficiently large (depending on $\pi_v$, $\tau_v$ only). 
In this domain,  we have
\begin{equation}
\begin{split}
 \mathcal{Z}^*(l_{T}, s)        = L(s+\frac{1}{2},  {\pi}\times {\tau}) \cdot l_{T}(v_0)  .
\label{eq-local-unramified-identity}
\end{split}
\end{equation}
\label{theorem-unramified-computation}
\end{theorem}

The main idea of the proof is to compare the integral $\mathcal{Z}^*(l_{T}, s)$ with a reformulated version (see the left-hand side of \eqref{eq-Local-L-function-linear-functional}) of the local unramified integral from the generalized doubling method in \eqref{eq-Local L function - Twisted Doubling}. To compare these two integrals, we derive simpler formulas for both integrals.

Recall that we have the unramified local integral \eqref{eq-Local L function - Twisted Doubling} of the generalized doubling method, which represents the local $L$-function $L(s+\frac{1}{2}, \pi\times\tau)$. 
In the following lemma we show that we may replace the matrix coefficient $\omega^0_{\pi}(h)$ on the left-hand side of \eqref{eq-Local L function - Twisted Doubling} by $l_{T}(\pi(h)v_0)$, to get the following result.

\begin{lemma}
Let $v_0$ be an unramified vector in $V_\pi$ and let $l_T$ be any linear functional on $V_\pi$ satisfying \eqref{eq-l_T}. Then for $\mathrm{Re}(s)\gg 0$, 
\begin{equation}
\int\limits_{\mathrm{Sp}_4(F)}  \int\limits_{N_{4, 16}^0(F)} l_T(\pi(h) v_0)  f^*_{\mathcal{W}(\tau, 4, \psi^{-1}), s}(\delta u_0 t(1, h))\psi_{N_{4,16}}^{-1}(u_0)du_0      dh    =   L( s+\frac{1}{2}, \pi\times \tau)   \cdot l_T(v_0) .
\label{eq-Local-L-function-linear-functional}
\end{equation}
\label{lemma-Local-L-function-linear-functional}
\end{lemma}

\begin{proof}
Since $v_0\in V_\pi$ is unramified, the function
\begin{equation}
\label{eq-lemma-Local-L-function-linear-functional}
h\mapsto \int\limits_{\mathrm{Sp}_4(\mathcal{O}_F)} l_T(\pi(kh) v_0)dk
\end{equation}
is a function on $\mathrm{Sp}_4(F)$ which is bi-invariant under $\mathrm{Sp}_4(\mathcal{O}_F)$,
and as such is a constant multiple of $\omega_\pi^0$. Note that the value of the function \eqref{eq-lemma-Local-L-function-linear-functional} at identity is $l_T(v_0)$. It follows from the normalization of $\omega_\pi^0$ that
\begin{equation*}
 \int\limits_{\mathrm{Sp}_4(\mathcal{O}_F)} l_T(\pi(kh) v_0)dk=l_T(v_0)\omega_{\pi}^0(h).
\end{equation*}
Note that the function 
\begin{equation*}
h\mapsto     \int\limits_{N_{4, 16}^0(F)}   f^*_{\mathcal{W}(\tau, 4, \psi^{-1}), s}(\delta u_0 t(1, h))\psi_{N_{4,16}}^{-1}(u_0)du_0  
\end{equation*}
is bi-invariant under $\mathrm{Sp}_4(\mathcal{O}_F)$, thus we obtain
\begin{equation*}
\begin{split}
&\int\limits_{\mathrm{Sp}_4(F)}  \int\limits_{N_{4, 16}^0(F)} l_T(\pi(h) v_0)  f^*_{\mathcal{W}(\tau, 4, \psi^{-1}), s}(\delta u_0 t(1, h))\psi_{N_{4,16}}^{-1}(u_0)du_0      dh  \\
=& \int\limits_{  \mathrm{Sp}_4(\mathcal{O}_F) \setminus \mathrm{Sp}_4(F) }  \int\limits_{\mathrm{Sp}_4(\mathcal{O}_F)}  l_T(\pi( k h) v_0) dk   \int\limits_{N_{4, 16}^0(F)}    f^*_{\mathcal{W}(\tau, 4, \psi^{-1}), s}(\delta u_0 t(1, h))\psi_{N_{4,16}}^{-1}(u_0)du_0      dh \\
=&l_T(v_0)  \int\limits_{\mathrm{Sp}_4(F)}  \int\limits_{N_{4, 16}^0(F)} \omega_\pi^0(h)  f^*_{\mathcal{W}(\tau, 4, \psi^{-1}), s}(\delta u_0 t(1, h))\psi_{N_{4,16}}^{-1}(u_0)du_0      dh \\
=& l_T(v_0)  \cdot L( s+\frac{1}{2}, \pi\times \tau). 
\end{split}
\end{equation*}
where we have used Theorem~\ref{thm-Local L-function-Twisted Doubling}  in the last equality.
\end{proof}

Now we prove the unramified computation in Theorem~\ref{theorem-unramified-computation}. 

\begin{proof}[Proof of Theorem~\ref{theorem-unramified-computation}]
By comparing \eqref{eq-local-unramified-identity} with \eqref{eq-Local-L-function-linear-functional}, it suffices to show that, for $\mathrm{Re}(s)$ sufficiently large,
\begin{equation}
\mathcal{Z}^*(l_T, s)=\int\limits_{\mathrm{Sp}_4(F)} \int\limits_{N_{4,16}^0(F)} l_T(\pi(h)v_0)   f^*_{\mathcal{W}(\tau, 4, \psi^{-1}), s}(\delta u_0 t(1, h) )\psi_{N_{4,16}}^{-1}(u_0)du_0      dh  .
\label{eq-Local-L-function-bridge-between-two-identities}
\end{equation}

We first consider the right-hand side of \eqref{eq-Local-L-function-bridge-between-two-identities}.
By the Iwasawa decomposition,  we see that the right-hand side of \eqref{eq-Local-L-function-bridge-between-two-identities} is equal to 
 \begin{equation*}
 \begin{split}
    \int\limits_{\mathrm{Sp}_4(\mathcal{O}_F)} \int\limits_{M_4(F)} \int\limits_{N_4(F)}      \int\limits_{N_{4,16}^0(F)}  l_T(\pi(umk) v_0) f^*_{\mathcal{W}(\tau, 4, \psi^{-1}), s}(\delta u_0 t(1, umk))\psi^{-1}_{N_{4,16}}(u_0)du_0   \delta_{P_4}^{-1}(m)    dudmdk.
 \end{split}
 \end{equation*}
Since $v_0$ and $f^*_{\mathcal{W}(\tau, 4, \psi^{-1}), s}$ are unramified, the $dk$-integration evaluates to 1, so we obtain
 \begin{equation}
   \int\limits_{M_4(F)} \int\limits_{N_4(F)}      \int\limits_{N_{4,16}^0(F)}   l_T(\pi(um) v_0) f^*_{\mathcal{W}(\tau, 4, \psi^{-1}), s}(\delta u_0 t(1, um))\psi^{-1}_{N_{4,16}}(u_0)du_0   \delta_{P_4}^{-1}(m)    dudm.
 \end{equation}
 Using \eqref{eq-l_T}, we see that  the right-hand side of \eqref{eq-Local-L-function-bridge-between-two-identities} is equal to
\begin{equation}
\begin{split}
     \int\limits_{\mathrm{GL}_2(F)}           \int\limits_{\mathrm{Mat}_2^0(F)}     \int\limits_{N_{4,16}^0(F)}   l_T\left( \pi   (\hat{a})  v_0 \right)     f^*_{\mathcal{W}(\tau, 4, \psi^{-1}), s}\left(     \delta u_0 t\left( 1, \left(\begin{smallmatrix}
I_2 & z\\
& I_2
\end{smallmatrix}\right)\hat{a}\right) \right)   \psi^{-1}_{N_{4,16}}(u_0)  \psi^{-1}(\mathrm{tr}(Tz))  \vert \det(a)\vert^{-3} du_0     dz  da .
\end{split}
\label{eq-integral-l_T-simplification1}
\end{equation}
Here, $\hat{a}=\left(\begin{smallmatrix}
a & \\
& J_2{}^t a^{-1} J_2
\end{smallmatrix}\right)$.

Using \eqref{eq-local-unramified-Sp16-equivariant-property1}, we see that the inner integration in \eqref{eq-integral-l_T-simplification1} becomes
\begin{equation*}
\begin{split}
&      \int\limits_{N_{4,16}^0(F)}     f^*_{\mathcal{W}(\tau, 4, \psi^{-1}), s}\left(     \delta u_0 t\left( 1, \left(\begin{smallmatrix}
I_2 & z\\
& I_2
\end{smallmatrix}\right)\hat{a}\right) \right)\psi^{-1}_{N_{4,16}}(u_0) du_0   \\
=&    \int\limits_{N_{4,16}^0(F)}     f^*_{\mathcal{W}(\tau, 4, \psi^{-1}), s}\left(     \delta u_0 t\left( \left(\begin{smallmatrix}
I_2 & \\
-z& I_2
\end{smallmatrix}\right) , \hat{a}\right) \right)\psi^{-1}_{N_{4,16}}(u_0) du_0    \\
=&    \int\limits_{N_{4,16}^0(F)}     f^*_{\mathcal{W}(\tau, 4, \psi^{-1}), s}\left(     \delta u_0 \delta^{-1} \delta t\left( \left(\begin{smallmatrix}
I_2 & \\
-z& I_2
\end{smallmatrix}\right) , \hat{a}\right) \delta^{-1} \right)\psi^{-1}_{N_{4,16}}(u_0) du_0 \\
=& \int\limits_{\overline{N}_{4,16}^0(F)}  
f^*_{\mathcal{W}(\tau, 4, \psi^{-1}), s}\left(  
\overline{u}_0(A, 0, C)
\left(\begin{smallmatrix}
I_2 &&&&\\
&a& &&\\
&&I_8 & &\\
&& & a^* & \\
& & &&I_2
\end{smallmatrix}\right)
\left(\begin{smallmatrix}
I_2 &&&&&&&\\
&I_2&&&&&&\\
&&I_2 &&&&&\\
&&z&I_2&&&&\\
&&&&I_2&&&\\
&&&&-z&I_2 &&\\
J_2{}^t a J_2-I_2&0&&&& & I_2 & \\
-z&a-I_2&&&&&&I_2
\end{smallmatrix}\right)
\right)  
\psi(\mathrm{tr}(A_1)) d\overline{u}_0
\end{split}
\end{equation*}
where
\begin{equation*}
\overline{N}_{4,16}^0(F)= \delta N_{4,16}(F) \delta^{-1}=  \left\{ \overline{u}_0(A,0, C)=\left(\begin{smallmatrix}
I_2 &&&&&&&\\
&I_2&&&&&&\\
&&I_2 &&&&&\\
&&&I_2&&&&\\
A_1&A_2&c_1&c_2&I_2&&&\\
A_3&A_4&c_3&c_1^*&&I_2 &&\\
0&0&A_4^*&A_2^*&& & I_2 & \\
0&0& A_3^*&A_1^*&&&&I_2
\end{smallmatrix}\right)
  \right\}.
\end{equation*}
The above integral is absolutely convergent for $\mathrm{Re}(s)$ sufficiently large.

Let $z\in \mathrm{Mat}_2^0(F)$ be fixed. We claim that the function on $a$ given by
\begin{equation}
\begin{split}
 \int\limits_{\overline{N}_{4,16}^0(F)} 
 f^*_{\mathcal{W}(\tau, 4, \psi^{-1}), s}\left(  
\overline{u}_0(A, 0, C)
\left(\begin{smallmatrix}
I_2 &&&&\\
&a& &&\\
&&I_8 & &\\
&& & a^* & \\
& & &&I_2
\end{smallmatrix}\right)
\left(\begin{smallmatrix}
I_2 &&&&&&&\\
&I_2&&&&&&\\
&&I_2 &&&&&\\
&&z&I_2&&&&\\
&&&&I_2&&&\\
&&&&-z&I_2 &&\\
J_2{}^t a J_2-I_2&0&&&& & I_2 & \\
-z&a-I_2&&&&&&I_2
\end{smallmatrix}\right)
\right) 
 \psi(\mathrm{tr}(A_1)) d\overline{u}_0
\end{split}
\label{eq-local-unramified-Sp16-inner-integral-support-in-a}
\end{equation}
is supported in $\mathrm{GL}_2(F)\cap \mathrm{Mat}_2(\mathcal{O}_F)$. To prove this, we use the right translations by the following elements fixing the section $f^*_{\mathcal{W}(\tau, 4, \psi^{-1}), s}$, for all $r\in \mathrm{Mat}_2(\mathcal{O}_F)$, 
\begin{equation*}
\begin{split}
\left(\begin{smallmatrix}
I_2 &&&&&&&\\
&I_2&&r&&&&\\
&&I_2 &&&&&\\
&&&I_2&&&&\\
&&&&I_2&&-J_2{}^t r J_2&\\
&&&&&I_2 &&\\
&&&&& & I_2 & \\
&&&&&&&I_2
\end{smallmatrix}\right).
\end{split}
\end{equation*}
We conjugate the above matrix to the left and obtain
\begin{equation*}
\begin{split}
\left(\begin{smallmatrix}
I_2 &&&&&&&\\
&I_2&-arz&ar&&&&\\
&&I_2 &&&&&\\
&&&I_2&&&&\\
&&&&I_2&&-J_2{}^t r J_2&\\
&&&&&I_2 &J_2 {}^t (arz) J_2&\\
&&&&& & I_2 & \\
&&&&&&&I_2
\end{smallmatrix}\right),
\end{split}
\end{equation*}
which contributes $\psi^{-1}(\mathrm{tr}(ar))$. The change of variables in $\overline{u}_0$ contributes $\psi^{-1}(\mathrm{tr}(ar))\psi(\mathrm{tr}(r))=\psi^{-1}(\mathrm{tr}(ar))$. Thus, in order for \eqref{eq-local-unramified-Sp16-inner-integral-support-in-a} to be non-vanishing, we must have $\psi^{-1}(\mathrm{tr}(2ar))=1$ for all $r\in \mathrm{Mat}_2^0(F)$, and hence we must have $a\in \mathrm{Mat}_2(\mathcal{O}_F)$. This shows that the function \eqref{eq-local-unramified-Sp16-inner-integral-support-in-a} is supported in $\mathrm{GL}_2(F)\cap \mathrm{Mat}_2(\mathcal{O}_F)$.

Note that for $a\in \mathrm{GL}_2(F)\cap \mathrm{Mat}_2(\mathcal{O}_F)$, the inner integration in \eqref{eq-integral-l_T-simplification1} equals
\begin{equation*}
\begin{split}
   \int\limits_{\overline{N}_{4,16}^0(F)}  
f^*_{\mathcal{W}(\tau, 4, \psi^{-1}), s}\left(  
\overline{u}_0(A, 0, C)
\left(\begin{smallmatrix}
I_2 &&&&\\
&a& &&\\
&&I_8 & &\\
&& & a^* & \\
& & &&I_2
\end{smallmatrix}\right)
\left(\begin{smallmatrix}
I_2 &&&&&&&\\
&I_2&&&&&&\\
&&I_2 &&&&&\\
&&z&I_2&&&&\\
&&&&I_2&&&\\
&&&&-z&I_2 &&\\
 &&&&& & I_2 & \\
-z& &&&&&&I_2
\end{smallmatrix}\right)
\right) 
\psi(\mathrm{tr}(A_1)) d\overline{u}_0 .
\end{split}
\end{equation*}
For fixed $a\in \mathrm{GL}_2(F)\cap \mathrm{Mat}_2(\mathcal{O}_F)$ and $z\in \mathrm{Mat}_2^0(F)$, we consider the following function on $A_2\in \mathrm{Mat}_2(F)$,
\begin{equation}
\begin{split}
 \int\limits  
f^*_{\mathcal{W}(\tau, 4, \psi^{-1}), s}\left(  
\left(\begin{smallmatrix}
I_2 &&&&&&&\\
&I_2&&&&&&\\
&&I_2 &&&&&\\
&&&I_2&&&&\\
A_1&A_2&c_1&c_2&I_2&&&\\
A_3&A_4&c_3&c_1^*&&I_2 &&\\
0&0&A_4^*&A_2^*&& & I_2 & \\
0&0& A_3^*&A_1^*&&&&I_2
\end{smallmatrix}\right)
\left(\begin{smallmatrix}
I_2 &&&&\\
&a& &&\\
&&I_8 & &\\
&& & a^* & \\
& & &&I_2
\end{smallmatrix}\right)  
\left(\begin{smallmatrix}
I_2 &&&&&&&\\
&I_2&&&&&&\\
&&I_2 &&&&&\\
&&z&I_2&&&&\\
&&&&I_2&&&\\
&&&&-z&I_2 &&\\
 &&&&& & I_2 & \\
-z& &&&&&&I_2
\end{smallmatrix}\right)
\right) 
\psi(\mathrm{tr}(A_1)) dA_1 dA_3 dA_4 dC.
\label{eq-local-unramified-Sp16-inner-integral-support-in-a_2}
\end{split}
\end{equation}
The above integral is absolutely convergent for $\mathrm{Re}(s)$ sufficiently large.
This time, we use the left translations by the following elements fixing the section $f^*_{\mathcal{W}(\tau, 4, \psi^{-1}), s}$ according to \eqref{eq-local-unramified-Sp16-equivariant-property2}, for all $r\in \mathrm{Mat}_2(\mathcal{O}_F)$, 
\begin{equation*}
\begin{split}
\left(\begin{smallmatrix}
I_2 &&&&&&&\\
ar&I_2&&&&&&\\
&&I_2 &&&&&\\
&&ar&I_2&&&&\\
&&&&I_2&&&\\
&&&&-J_2{}^t (ar) J_2&I_2 &&\\
&&&&& & I_2 & \\
&&&&&&-J_2{}^t (ar) J_2&I_2
\end{smallmatrix}\right).
\end{split}
\end{equation*}
We conjugate the above matrices to the right, to obtain matrices of the form
\begin{equation*}
\begin{split}
\left(\begin{smallmatrix}
I_2 &&&&&&&\\
r&I_2&&&&&&\\
&&I_2 &&&&&\\
&&ar&I_2&&&&\\
&&&&I_2&&&\\
&&&&-J_2{}^t (ar) J_2&I_2 &&\\
&&&&& & I_2 & \\
&&&&&&-J_2{}^t r J_2&I_2
\end{smallmatrix}\right),
\end{split}
\end{equation*}
which belong to $\mathrm{Sp}_{16}(\mathcal{O}_F)$ and hence fixing the section $f^*_{\mathcal{W}(\tau, 4, \psi^{-1}), s}$ because it is unramified.
The change of variables in the coordinates $(A_1, A_3, A_4, C)$ produces a factor $\psi(\mathrm{tr}(A_2 ar))$. Therefore, if \eqref{eq-local-unramified-Sp16-inner-integral-support-in-a_2} is non-vanishing, then $A_2 a\in \mathrm{Mat}_2(\mathcal{O}_F)$. Using this, we see that 
\begin{equation*}
\begin{split}
&    \int\limits_{\overline{N}_{4,16}^0(F)}  
f^*_{\mathcal{W}(\tau, 4, \psi^{-1}), s}\left(  
\left(\begin{smallmatrix}
I_2 &&&&&&&\\
&I_2&&&&&&\\
&&I_2 &&&&&\\
&&&I_2&&&&\\
A_1&A_2&c_1&c_2&I_2&&&\\
A_3&A_4&c_3&c_1^*&&I_2 &&\\
0&0&A_4^*&A_2^*&& & I_2 & \\
0&0& A_3^*&A_1^*&&&&I_2
\end{smallmatrix}\right)
\left(\begin{smallmatrix}
I_2 &&&&\\
&a& &&\\
&&I_8 & &\\
&& & a^* & \\
& & &&I_2
\end{smallmatrix}\right)  
\left(\begin{smallmatrix}
I_2 &&&&&&&\\
&I_2&&&&&&\\
&&I_2 &&&&&\\
&&z&I_2&&&&\\
&&&&I_2&&&\\
&&&&-z&I_2 &&\\
 &&&&& & I_2 & \\
-z& &&&&&&I_2
\end{smallmatrix}\right)
\right)  \psi(\mathrm{tr}(A_1)) dAdC \\
=&    \int\limits_{\overline{N}_{4,16}^0(F)}  
f^*_{\mathcal{W}(\tau, 4, \psi^{-1}), s}\left(  
\left(\begin{smallmatrix}
I_2 &&&&&&&\\
&I_2&&&&&&\\
&&I_2 &&&&&\\
&&&I_2&&&&\\
A_1&0&c_1&c_2&I_2&&&\\
A_3&A_4&c_3&c_1^*&&I_2 &&\\
0&0&A_4^*&0&& & I_2 & \\
0&0& A_3^*&A_1^*&&&&I_2
\end{smallmatrix}\right)
\left(\begin{smallmatrix}
I_2 &&&&&&&\\
&a&&&&&&\\
&&I_2 &&&&&\\
&&z&I_2&&&&\\
&&&&I_2&&&\\
&&&&-z&I_2 &&\\
 &&&&& & a^* & \\
-z& &&&&&&I_2
\end{smallmatrix}\right)  \right.\\
& \ \ \ \ \ \ \left.
\left(\begin{smallmatrix}
I_2 &&&&&&&\\
&I_2&&&&&&\\
&&I_2 &&&&&\\
&&&I_2&&&&\\
0&A_2 a&0&0&I_2&&&\\
0&0&0&0&&I_2 &&\\
0&0&0&(A_2 a)^*&& & I_2 & \\
0&0& 0&0&&&&I_2
\end{smallmatrix}\right)
\right) 
\psi(\mathrm{tr}(A_1)) dAdC \\
=&  \vert \det(a)\vert^{-2}     \int
f^*_{\mathcal{W}(\tau, 4, \psi^{-1}), s}\left(  
\left(\begin{smallmatrix}
I_2 &&&&&&&\\
&I_2&&&&&&\\
&&I_2 &&&&&\\
&&&I_2&&&&\\
A_1&0&c_1&c_2&I_2&&&\\
A_3&A_4&c_3&c_1^*&&I_2 &&\\
0&0&A_4^*&0&& & I_2 & \\
0&0& A_3^*&A_1^*&&&&I_2
\end{smallmatrix}\right) 
\left(\begin{smallmatrix}
I_2 &&&&&&&\\
&a&&&&&&\\
&&I_2 &&&&&\\
&&z&I_2&&&&\\
&&&&I_2&&&\\
&&&&-z&I_2 &&\\
 &&&&& & a^* & \\
-z& &&&&&&I_2
\end{smallmatrix}\right)
\right)  
\psi(\mathrm{tr}(A_1)) dA_1 dA_3 dA_4 dC.
\end{split}
\end{equation*}
Thus, for $\mathrm{Re}(s)\gg 0$, the right-hand side of \eqref{eq-Local-L-function-bridge-between-two-identities} is equal to
\begin{equation}
\begin{split}
                   \int\limits_{\mathrm{GL}_2(F)\cap \mathrm{Mat}_2(\mathcal{O}_F)} \int\limits_{\mathrm{Mat}_2^0(F)}      \int\limits_{\overline{N}^1_{4,16}(F)}   &l_T\left( \pi   (\hat{a})  v_0 \right)   
                   f^*_{\mathcal{W}(\tau, 4, \psi^{-1}), s}\left(  
\overline{u}_1 \cdot
\left(\begin{smallmatrix}
I_2 &&&&&&&\\
&a&&&&&&\\
&&I_2 &&&&&\\
&&z&I_2&&&&\\
&&&&I_2&&&\\
&&&&-z&I_2 &&\\
 &&&&& & a^* & \\
-z& &&&&&&I_2
\end{smallmatrix}\right)
\right) 
\\
& \ \ \      \psi(\mathrm{tr}(A_1))   \psi^{-1}(\mathrm{tr}(Tz)) \vert \det(a)\vert^{-5}  
 d\overline{u}_1  dz  da, 
\end{split}
\label{eq-integral-l_T-simplification2}
\end{equation}
where 
\begin{equation*}
\overline{N}^1_{4,16}(F)=    \left\{ 
 \overline{u}_1=   \left(\begin{smallmatrix}
I_2 &&&&&&&\\
&I_2&&&&&&\\
&&I_2 &&&&&\\
&&&I_2&&&&\\
A_1&0&c_1&c_2&I_2&&&\\
A_3&A_4&c_3&c_1^*&&I_2 &&\\
0&0&A_4^*&0&& & I_2 & \\
0&0& A_3^*&A_1^*&&&&I_2
\end{smallmatrix}\right) \in \overline{N}_{4,16}^0(F)
\right\}.
\end{equation*}

Next, we fix $a\in \mathrm{GL}_2(F)\cap \mathrm{Mat}_2(\mathcal{O}_F)$, and consider the following function on $z$,
\begin{equation}
\begin{split}
\int\limits_{\overline{N}^1_{4,16}(F)}             f^*_{\mathcal{W}(\tau, 4, \psi^{-1}), s}\left(  
\left(\begin{smallmatrix}
I_2 &&&&&&&\\
&I_2&&&&&&\\
&&I_2 &&&&&\\
&&&I_2&&&&\\
A_1&0&c_1&c_2&I_2&&&\\
A_3&A_4&c_3&c_1^*&&I_2 &&\\
0&0&A_4^*&0&& & I_2 & \\
0&0& A_3^*&A_1^*&&&&I_2
\end{smallmatrix}\right) 
\left(\begin{smallmatrix}
I_2 &&&&&&&\\
&a&&&&&&\\
&&I_2 &&&&&\\
&&z&I_2&&&&\\
&&&&I_2&&&\\
&&&&-z&I_2 &&\\
 &&&&& & a^* & \\
-z& &&&&&&I_2
\end{smallmatrix}\right)
\right)   \psi(\mathrm{tr}(A_1))   
d\overline{u}_1.
\end{split}
\label{eq-integral-l_T-simplification--3}
\end{equation}
The above integral is absolutely convergent for $\mathrm{Re}(s)$ sufficiently large.
We use the right translations by the following elements fixing the section $f^*_{\mathcal{W}(\tau, 4, \psi^{-1}), s}$, 
for all $r\in \mathrm{Mat}_2(\mathcal{O}_F)$, 
\begin{equation*}
\begin{split}
\left(\begin{smallmatrix}
I_2 &&&r&&&&\\
&I_2&&&&&&\\
&&I_2 &&&&&\\
&&&I_2&&&&\\
&&&&I_2&&&-J_2{}^t r J_2\\
&&&&&I_2 &&\\
&&&&& & I_2 & \\
&&&&&&&I_2
\end{smallmatrix}\right).
\end{split}
\end{equation*} 
We conjugate the above matrices to the left and obtain
\begin{equation*}
\begin{split}
\left(\begin{smallmatrix}
I_2 &&-rz&r&&&&\\
&I_2&&&&&&\\
&&I_2 &&&&&\\
&&&I_2&&&&\\
&&&&I_2&&&-J_2{}^t r J_2\\
&&&&&I_2 &&J_2{}^t(rz)J_2\\
&&&&& & I_2 & \\
&&&&&&&I_2
\end{smallmatrix}\right),
\end{split}
\end{equation*}
which contributes $\psi(\mathrm{tr}(rz))$. The change of variables in $A_1$ contributes $\psi(\mathrm{tr}(rz))$ as well. In order for \eqref{eq-integral-l_T-simplification--3} to be non-vanishing, we must have $\psi(\mathrm{tr}(2rz))=1$ for all $r\in \mathrm{Mat}_2(\mathcal{O}_F)$.
We see that the function \eqref{eq-integral-l_T-simplification--3} is supported in $z\in \mathrm{Mat}_2^0(\mathcal{O}_F)$, and hence the $dz$-integration in \eqref{eq-integral-l_T-simplification2} evaluates to 1. Thus, 
the right-hand side of \eqref{eq-Local-L-function-bridge-between-two-identities} is equal to
\begin{equation}
\begin{split}
                   \int\limits_{\mathrm{GL}_2(F)\cap \mathrm{Mat}_2(\mathcal{O}_F)}      \int\limits_{\overline{N}^1_{4,16}(F)}   l_T\left( \pi   (\hat{a})  v_0 \right)  f^*_{\mathcal{W}(\tau, 4, \psi^{-1}), s}\left( 
\overline{u}_1 \cdot
\left(\begin{smallmatrix}
I_2 &&&&&&&\\
&a&&&&&&\\
&&I_2 &&&&&\\
&&&I_2&&&&\\
&&&&I_2&&&\\
&&&&&I_2 &&\\
 &&&&& & a^* & \\
& &&&&&&I_2
\end{smallmatrix}\right)
\right) 
 \psi(\mathrm{tr}(A_1))    \vert \det(a)\vert^{-5}  
 d\overline{u}_1    da.
\end{split}
\label{eq-integral-l_T-simplification--4}
\end{equation}

Next, we consider the inner integration in \eqref{eq-integral-l_T-simplification--4}. For $g= \left(\begin{smallmatrix}
 g_1 & g_2 \\
 g_3 & g_4\end{smallmatrix}\right) \in \mathrm{Sp}_4(F)$ with $g_i\in \mathrm{Mat}_2(F)$, we define 
\begin{equation*}
\begin{split}
\lambda(f^*_{\mathcal{W}(\tau, 4, \psi^{-1}), s})(g)=\int\limits_{\overline{N}^1_{4,16}(F)}  f^*_{\mathcal{W}(\tau, 4, \psi^{-1}), s}\left(
\left(\begin{smallmatrix}
I_2 &&&&&&&\\
&I_2&&&&&&\\
&&I_2 &&&&&\\
&&&I_2&&&&\\
A_1&0&c_1&c_2&I_2&&&\\
A_3&A_4&c_3&c_1^*&&I_2 &&\\
0&0&A_4^*&0&& & I_2 & \\
0&0& A_3^*&A_1^*&&&&I_2
\end{smallmatrix}\right) 
\left(\begin{smallmatrix}
I_2 &&&&&&&\\
&g_1&&&&&g_2&\\
&&I_2 &&&&&\\
&&&I_2&&&&\\
&&&&I_2&&&\\
&&&&&I_2 &&\\
 &g_3&&&& & g_4 & \\
& &&&&&&I_2
\end{smallmatrix}\right)
\right) 
  \psi(\mathrm{tr}(A_1))  d\overline{u}_1.
\end{split}
\end{equation*}
The above integral defining $\lambda(f^*_{\mathcal{W}(\tau, 4, \psi^{-1}), s})(g) $ is absolutely convergent for $\mathrm{Re}(s)\gg 0$.
We caution the reader that $f^*_{\mathcal{W}(\tau, 4, \psi^{-1}), s}$ is a function on $\mathrm{Sp}_{16}(F)$, while $\lambda(f^*_{\mathcal{W}(\tau, 4, \psi^{-1}), s})$ is a function on $\mathrm{Sp}_4(F)$ defined above.
We claim that $\lambda(f^*_{\mathcal{W}(\tau, 4, \psi^{-1}), s})(g)$ is an unramified section of the induced representation 
$
     \mathrm{Ind}_{P_4(F)}^{\mathrm{Sp}_4(F)} ( \chi_1(\det) \vert \det \vert^{s+3}).
$  
To prove this, we need to consider a matrix of the form $\left(\begin{smallmatrix}
a &b\\
&a^*
\end{smallmatrix}\right)\in P_4(F)$ and show appropriate equivariance properties.
 First, we check that $\lambda(f^*_{\mathcal{W}(\tau, 4, \psi^{-1}), s})(g)$ is left-invariant under  $\left(\begin{smallmatrix}
I_2 &b\\
&I_2
\end{smallmatrix}\right)$, by conjugating the matrix 
\begin{equation*}
\left(\begin{smallmatrix}
I_2 &&&&&&&\\
& I_2&&&&&b&\\
&&I_2 &&&&&\\
&&&I_2&&&&\\
&&&&I_2&&&\\
&&&&&I_2 &&\\
 & &&&& &I_2 & \\
& &&&&&&I_2
\end{smallmatrix}\right)
\end{equation*}
to the left and do a change of variable in the coordinates $(c_1, c_2, c_3)$. Second, we plug in $g_0=\left(\begin{smallmatrix}
a &\\
&a^*
\end{smallmatrix}\right)$ where $a\in \mathrm{GL}_2(F)$, to $\lambda(f^*_{\mathcal{W}(\tau, 4, \psi^{-1}), s})(g_0 g)$, and conjugate the embedding of $g_0$ into $\mathrm{Sp}_{16}(F)$ to the left in the integral defining $\lambda(f^*_{\mathcal{W}(\tau, 4, \psi^{-1}), s})$. We obtain $\vert \det(a)\vert^{-2}$ from the change of variable in $A_4$. 
We also obtain a factor of 
$\chi_1(\det(a))|\det(a)|^{2} |\det(a)|^{s+\frac{9}{2}}$ 
from the section $f^*_{\mathcal{W}(\tau, 4, \psi^{-1}), s}$. This is explained as follows.  For any $h\in \mathrm{Sp}_{16}(F)$, we have
\begin{equation*}
\begin{split}
f^*_{\mathcal{W}(\tau, 4, \psi^{-1}), s}\left(
\left(\begin{smallmatrix}
I_2 &&&&&&&\\
&a&&&&&&\\
&&I_2 &&&&&\\
&&&I_2&&&&\\
&&&&I_2&&&\\
&&&&&I_2 &&\\
 &&&&& & a^* & \\
& &&&&&&I_2
\end{smallmatrix}\right) h
\right)  
=&  |\det(a)|^{s+\frac{9}{2}} \Delta(\tau, 4) \left(\begin{smallmatrix}
I_2 &&&\\
&a&&\\
&&I_2 &\\
&&&I_2\\
\end{smallmatrix}\right) f^*_{\mathcal{W}(\tau, 4, \psi^{-1}), s}\left(h\right)  \\
=& |\det(a)|^{s+\frac{9}{2}} |\det(a)|^{2}\chi_1(\det(a)) f^*_{\mathcal{W}(\tau, 4, \psi^{-1}), s}\left(h\right).
\end{split}
\end{equation*}
Here, the second equality follows from the action of the Speh representation $\Delta(\tau, 4)$ (which is given in \eqref{eq-local-Speh-representation-GL8}). Thus, $\lambda(f^*_{\mathcal{W}(\tau, 4, \psi^{-1}), s})\in \mathrm{Ind}_{P_4(F)}^{\mathrm{Sp}_4(F)} ( \chi_1(\det) \vert \det \vert^{s+3}).$ 
Finally, $\lambda(f^*_{\mathcal{W}(\tau, 4, \psi^{-1}), s})(gk)=\lambda(f^*_{\mathcal{W}(\tau, 4, \psi^{-1}), s})(g)$ for any $k\in \mathrm{Sp}_4(\mathcal{O}_F)$ because the section $f^*_{\mathcal{W}(\tau, 4, \psi^{-1}), s}$ is unramified. This proves the claim. 

Next, we claim that 
\begin{equation}
    \lambda(f^*_{\mathcal{W}(\tau, 4, \psi^{-1}), s})\left( 
I_4
\right)= d_{\tau}^{\mathrm{Sp}_{16}}(s).
\label{eq-Sp(16)-evaluation-of-local-unramified-integral}
\end{equation}
This can be verified by matrix multiplication, as follows. Specifically, one can use \eqref{eq-local-unramified-Sp16-equivariant-property3}, \eqref{eq-local-unramified-Sp16-equivariant-property2},  and the right-invariance of $f^*_{\mathcal{W}(\tau, 4, \psi^{-1}), s}$ by $\mathrm{Sp}_{16}(\mathcal{O}_F)$, to show that 
\begin{equation*}
\begin{split}
\lambda(f^*_{\mathcal{W}(\tau, 4, \psi^{-1}), s})\left(
I_4
\right) =     
\int\limits_{\overline{N}^1_{4,16}(\mathcal{O}_F)}    
   f^*_{\mathcal{W}(\tau, 4, \psi^{-1}), s} 
\left(\begin{smallmatrix}
I_2 &&&&&&&\\
&I_2&&&&&&\\
&&I_2 &&&&&\\
&&&I_2&&&&\\
A_1&0&c_1&c_2&I_2&&&\\
A_3&A_4&c_3&c_1^*&&I_2 &&\\
0&0&A_4^*&0&& & I_2 & \\
0&0& A_3^*&A_1^*&&&&I_2
\end{smallmatrix} 
\right) 
  \psi(\mathrm{tr}(A_1))  d\overline{u}_1.
  \end{split}
\end{equation*}
First, we use conjugation by the following elements, for $r\in \mathrm{Mat}_2(\mathcal{O}_F)$, 
\begin{equation*}
\left(\begin{smallmatrix}
I_2 &r&&&&&&\\
& I_2&&&&&&\\
&&I_2 &r&&&&\\
&&&I_2&&&&\\
&&&&I_2&r^*&&\\
&&&&&I_2 &&\\
 & &&&& &I_2 &r^* \\
& &&&&&&I_2
\end{smallmatrix}\right),
\end{equation*}
which produces the character $\psi(\mathrm{tr}(J_2{}^t r J_2 A_3)$, to conclude that 
\begin{equation*}
    A_3\mapsto \int    
   f^*_{\mathcal{W}(\tau, 4, \psi^{-1}), s}  
\left(\begin{smallmatrix}
I_2 &&&&&&&\\
&I_2&&&&&&\\
&&I_2 &&&&&\\
&&&I_2&&&&\\
A_1&0&c_1&c_2&I_2&&&\\
A_3&A_4&c_3&c_1^*&&I_2 &&\\
0&0&A_4^*&0&& & I_2 & \\
0&0& A_3^*&A_1^*&&&&I_2
\end{smallmatrix} 
\right) 
  \psi(\mathrm{tr}(A_1))  dA_1 dA_4 dC
\end{equation*}
is supported in $\mathrm{Mat}_2(\mathcal{O}_F)$. For the variable $A_1$, we consider other unipotent matrices $g_0\in \mathrm{SL}_4(\mathcal{O}_F)$, and use conjugation by $\mathrm{diag}(g_0, g_0, g_0^*, g_0^*)$, to show that  
\begin{equation*}
    A_1\mapsto \int    
   f^*_{\mathcal{W}(\tau, 4, \psi^{-1}), s} 
\left(\begin{smallmatrix}
I_2 &&&&&&&\\
&I_2&&&&&&\\
&&I_2 &&&&&\\
&&&I_2&&&&\\
A_1&0&c_1&c_2&I_2&&&\\
0&A_4&c_3&c_1^*&&I_2 &&\\
0&0&A_4^*&0&& & I_2 & \\
0&0&0 &A_1^*&&&&I_2
\end{smallmatrix} 
\right) 
  \psi(\mathrm{tr}(A_1))   dA_4 dC
\end{equation*}
is supported in $\mathrm{Mat}_2(\mathcal{O}_F)$. Then we consider the variable $A_4$. This time, we use matrices of the form
\begin{equation*}
\left(\begin{smallmatrix}
I_2 &&&&&&r&\\
& I_2&&&&&&r^*\\
&&I_2 &&&&&\\
&&&I_2&&&&\\
&&&&I_2&&&\\
&&&&&I_2 &&\\
 & &&&& &I_2 & \\
& &&&&&&I_2
\end{smallmatrix}\right)
\end{equation*}
where $r\in \mathrm{Mat}_2(\mathcal{O}_F)$, to conclude that 
\begin{equation*}
    A_4\mapsto \int    
   f^*_{\mathcal{W}(\tau, 4, \psi^{-1}), s} 
\left(\begin{smallmatrix}
I_2 &&&&&&&\\
&I_2&&&&&&\\
&&I_2 &&&&&\\
&&&I_2&&&&\\
0&0&c_1&c_2&I_2&&&\\
0&A_4&c_3&c_1^*&&I_2 &&\\
0&0&A_4^*&0&& & I_2 & \\
0&0&0 &0&&&&I_2
\end{smallmatrix} 
\right) 
    dC
\end{equation*}
is supported in $\mathrm{Mat}_2(\mathcal{O}_F)$. The next step is to use matrices of the form
$
    \left(\begin{smallmatrix}
        I_4 & &r &\\
        & I_4 & &r^* \\
        & & I_4 & \\
        &&&I_4
    \end{smallmatrix}\right),
$
for $r\in \mathrm{Mat}_4(\mathcal{O}_F)$, to conclude that 
\begin{equation*}
    C\mapsto 
   f^*_{\mathcal{W}(\tau, 4, \psi^{-1}), s}  
\left(\begin{smallmatrix}
        I_4 & & &\\
        & I_4 & & \\
        & C& I_4 & \\
        &&&I_4
\end{smallmatrix}\right) 
\end{equation*}
is supported in $\mathrm{Mat}_4^0(\mathcal{O}_F)$. Therefore, 
\begin{equation*}
\begin{split}
\lambda(f^*_{\mathcal{W}(\tau, 4, \psi^{-1}), s})\left(
I_4
\right) =     
\int\limits_{\overline{N}^1_{4,16}(\mathcal{O}_F)}    
   f^*_{\mathcal{W}(\tau, 4, \psi^{-1}), s} 
\left(I_{16}
\right) 
   d\overline{u}_1=  f^*_{\mathcal{W}(\tau, 4, \psi^{-1}), s} 
\left(I_{16}
\right) =  d_{\tau}^{\mathrm{Sp}_{16}}(s).
  \end{split}
\end{equation*}
The above second equality follows from the fact that $\mathrm{vol}(\overline{N}^1_{4,16}(\mathcal{O}_F))=1$. This proves \eqref{eq-Sp(16)-evaluation-of-local-unramified-integral}. 
Thus, 
the right-hand side of \eqref{eq-Local-L-function-bridge-between-two-identities} is equal to
\begin{equation}
\begin{split}
                   \int\limits_{\mathrm{GL}_2(F)\cap \mathrm{Mat}_2(\mathcal{O}_F)}        l_T\left( \pi   (\hat{a})  v_0 \right)   
\lambda(f^*_{\mathcal{W}(\tau, 4, \psi^{-1}), s})\left( \begin{smallmatrix} a& \\ & a^*\end{smallmatrix}\right)  \vert \det(a)\vert^{-5}  
     da
\end{split}
\label{eq-integral-l_T-simplification--5}
\end{equation}
where
$\lambda(f^*_{\mathcal{W}(\tau, 4, \psi^{-1}), s})(g)$ is an unramified section of the induced representation 
\begin{equation*}
     \mathrm{Ind}_{P_4(F)}^{\mathrm{Sp}_4(F)} ( \chi_1(\det)\vert \det \vert^{s+3})
 \end{equation*}
and its value at identity is $\lambda(f^*_{\mathcal{W}(\tau, 4, \psi^{-1}), s})(I_4)=d_{\tau}^{\mathrm{Sp}_{16}}(s)$.
Thus, it suffices to prove that, for $\mathrm{Re}(s)$ sufficiently large,
\begin{equation}
\label{eq-Local-L-function-bridge-between-two-identities-2}
\begin{split}
 \mathcal{Z}^*(l_T, s) =                   \int\limits_{\mathrm{GL}_2(F)\cap \mathrm{Mat}_2(\mathcal{O}_F)}        l_T\left( \pi   (\hat{a})  v_0 \right)   
\lambda(f^*_{\mathcal{W}(\tau, 4, \psi^{-1}), s})\left( \begin{smallmatrix} a& \\ & a^*\end{smallmatrix}\right)  \vert \det(a)\vert^{-5}  
     da.
\end{split}
\end{equation}

Now we consider $\mathcal{Z}^*(l_T, s)$. Recall that $\mathcal{Z}^*(l_T, s)$ involves a section $f^*_{\mathcal{W}(\tau \otimes\chi_{T}, 2, \psi_{2T}), s}$ on $\mathrm{Sp}_8(F)$.
Using the Iwasawa decomposition again, we compute that, for $\mathrm{Re}(s)$ sufficiently large,
\begin{align} \label{eq-local-unramified-LHS-reduction1}
	\mathcal{Z}^*(l_T, s)	&=        \int\limits_{N_4(F)\backslash \mathrm{Sp}_4(F)} \int\limits_{N_{2,8}^0(F)}    l_T(\pi(h)v_0)  \omega_{\psi}\left( \alpha_T(v)(1,h)\right) )\Phi^0(I_2 )
f^*_{\mathcal{W}(\tau \otimes\chi_{T}, 2, \psi_{2T}), s}( \gamma v (1, h) )    dv   dh   \nonumber\\
		&=  \int\limits_{\mathrm{GL}_2(F)} l_T(\pi(\hat{a} )v_0) \vert \det(a)\vert^{-3}   \int\limits_{N_{2,8}^0(F)}   
    \omega_{\psi}\left( \alpha_T(v)(1,\hat{a} )\right) )\Phi^0(I_2 )   f^*_{\mathcal{W}(\tau \otimes\chi_{T}, 2, \psi_{2T}), s}( \gamma v (1, \hat{a}  ) )   dv   da.
\end{align}

Recall that 
$$N_{2,8}^0(F)=\left\{v(x, 0, z)=\left(\begin{smallmatrix}
I_2 &x  &0  & z\\
& I_2 & &0  \\
& &I_2 & -J_2{}^t z J_2\\
& & &I_2
\end{smallmatrix}\right)\in N_{2,8}(F): x\in \mathrm{Mat}_2(F), z\in \mathrm{Mat}_2^0(F)  \right\}.$$
According to the action of the Weil representation, we have
\begin{equation*}
\begin{split}
 \omega_{\psi}\left( \alpha_T(v(x,0,z))(1,\hat{a} )\right) )\Phi^0(I_2 )      =\chi_T(\det(a)) \vert \det(a)\vert   \omega_{\psi}\left(   \alpha_T(v(xa,0,z))\right) )\Phi^0(a).
\end{split}
\end{equation*}
Moreover, by direct matrix computation we see that
$v(x,0,z)  \cdot  (1, \hat{a}  ) = (1, \hat{a}) \cdot  v(xa, 0, z).$
Then the inner $dv$-integration of \eqref{eq-local-unramified-LHS-reduction1} equals
\begin{equation*}
\begin{split}
    & \chi_T(\det(a)) \vert \det(a)\vert     \iint    
 \omega_{\psi}\left( \alpha_T(v(xa,0,z)) \right) \Phi^0(a)   
f^*_{\mathcal{W}(\tau \otimes\chi_{T}, 2, \psi_{2T}), s}(    \gamma (1, \hat{a}  )  v(xa,0,z)  )         \psi(\mathrm{tr}(Tz))  dxdz \\
=     & \chi_T(\det(a)) \vert \det(a)\vert^{-1}    \iint    
 \omega_{\psi}\left( \alpha_T(v(x,0,z)) \right) \Phi^0(a)   
f^*_{\mathcal{W}(\tau \otimes\chi_{T}, 2, \psi_{2T}), s}(     \gamma (1, \hat{a}  )   v(x,0,z)  )         \psi(\mathrm{tr}(Tz))  dxdz.
\end{split}
\end{equation*}
Note that
$
\omega_{\psi}  \left( \alpha_T(v(x,0,z)) \right) \Phi^0(a) = \psi  (\mathrm{tr}(Tz)) \Phi^0(a+x).
$
It follows that the inner $dv$-integration of \eqref{eq-local-unramified-LHS-reduction1} equals
\begin{equation*}
\begin{split}
 \chi_T(\det(a)) \vert \det(a)\vert^{-1}    \iint    
 \Phi^0(a+x) f^*_{\mathcal{W}(\tau \otimes\chi_{T}, 2, \psi_{2T}), s}( \gamma (1, \hat{a}  )     v(x,0,z)  )         \psi(\mathrm{tr}(Tz))  dxdz .
 \end{split}
 \end{equation*}
 We apply a change of variable $x\mapsto x-a$ to get
 \begin{equation*}
 \begin{split}
 \chi_T(\det(a)) \vert \det(a)\vert^{-1}    \iint    
  \Phi^0(x) f^*_{\mathcal{W}(\tau \otimes\chi_{T}, 2, \psi_{2T}), s}( \gamma (1, \hat{a}  )    v(x-a,0,z)  )         \psi(\mathrm{tr}(Tz))  dxdz .
\end{split}
\end{equation*}
Since  $\Phi^0=1_{\mathrm{Mat}_2(\mathcal{O}_F)}$ and $ f^*_{\mathcal{W}(\tau \otimes\chi_{T}, 2, \psi_{2T}), s}$ is invariant under right translation by $v(x, 0, 0)$ for $x\in \mathrm{Mat}_2(\mathcal{O}_F)$, the $dx$-integration evaluates to 1. Thus the inner integration of \eqref{eq-local-unramified-LHS-reduction1} equals
\begin{equation*}
\begin{split}
       & \chi_T(\det(a)) \vert \det(a)\vert^{-1}    \int\limits_{\mathrm{Mat}_2^0(F)}    
  f^*_{\mathcal{W}(\tau \otimes\chi_{T}, 2, \psi_{2T}), s}( \gamma (1, \hat{a}  )  \gamma^{-1} \gamma    v(-a,0,z) \gamma^{-1} )         \psi(\mathrm{tr}(Tz))  dz  \\
  =&    \chi_T(\det(a)) \vert \det(a)\vert^{-1}     \int\limits_{\mathrm{Mat}_2^0(F)}     
  f^*_{\mathcal{W}(\tau \otimes\chi_{T}, 2, \psi_{2T}), s} \left(     
  \left(\begin{smallmatrix}
a &&&\\ &I_2 &&\\&&I_2&\\&&&a^*
\end{smallmatrix}\right)
  \left(\begin{smallmatrix}
I_2 &&&\\ &I_2 &&\\-a&z&I_2&\\&-J_2 {}^t a J_2&&I_2
\end{smallmatrix}\right)
  \right)      \psi^{-1}(\mathrm{tr}(Tz))  dz  .
\end{split}
\end{equation*}
Therefore,  for $\mathrm{Re}(s)$ sufficiently large, $\mathcal{Z}^*(l_T, s)$ is equal to
\begin{equation}
\begin{split}
&\int\limits_{\mathrm{GL}_2(F)}  
l_T(\pi(\hat{a} )v_0) \chi_T(\det(a)) \vert \det(a)\vert^{-4}   \int\limits_{\mathrm{Mat}_2^0(F)}\\ 
&         f^*_{\mathcal{W}(\tau \otimes\chi_{T}, 2, \psi_{2T}), s} \left(     
      \left(\begin{smallmatrix}
a &&&\\ &I_2 &&\\&&I_2&\\&&&a^*
\end{smallmatrix}\right)
       \left(\begin{smallmatrix}
I_2 &&&\\ &I_2 &&\\-a&z&I_2&\\&-J_2 {}^t a J_2&&I_2
\end{smallmatrix}\right) \right)  
 \psi^{-1}(\mathrm{tr}(Tz))  dz  da.
 \label{eq-local-unramified-LHS-reduction2}
\end{split}
\end{equation}

Next we look at the inner integration in \eqref{eq-local-unramified-LHS-reduction2}.
We use the relation between unramified sections of different induced representations in Lemma \ref{lemma-local-unramified-section-Sp8-property5}, and apply it to $g_1=I_2, g_2=-\frac{T}{2}$. 
Since $\det(T)\in \mathcal{O}_F^\times$, we have $\vert \det(T/2)\vert=1$  and
$\chi_1(\det(T/2))=1$. Also, since $(\mathcal{O}_F^\times, \mathcal{O}_F^\times)_F=1$ (see \cite[Chapter V, Proposition 3.4]{Neukirch1999}), we have
\begin{equation*}
    \chi_T(\det(T/2))=(\det(T), \det(T/2))_F=\left(\det(T), \frac{\det(T)}{4}\right)_F=1.
\end{equation*}
Thus, 
$
     \vert \det(T/2)\vert^{s+\frac{3}{2}} \chi_1(\det(T/2))\chi_T (\det(T/2) )=1.
$
We obtain that the inner integration in \eqref{eq-local-unramified-LHS-reduction2} is equal to
\begin{equation*}
\begin{split}             
\int\limits_{\mathrm{Mat}_2^0(F)}  
  f^*_{\mathcal{W}(\tau \otimes\chi_{T}, 2, \psi^{-1} ), s} \left(     
  \left(\begin{smallmatrix}
I_2 &&&\\
&-2T^{-1} && \\
&&-\frac{1}{2}J_2{}^t T J_2 & \\
&&&I_2
\end{smallmatrix}\right) 
  \left(\begin{smallmatrix}
a &&&\\ &I_2 &&\\&&I_2&\\&&&a^*
\end{smallmatrix}\right)   \left(\begin{smallmatrix}
I_2 &&&\\ &I_2 &&\\-a&z&I_2&\\&-J_2 {}^t a J_2&&I_2
\end{smallmatrix}\right)
  \right)     \psi^{-1}(\mathrm{tr}(Tz)) dz,
\end{split}
\end{equation*}
where $f^*_{\mathcal{W}(\tau\otimes \chi_T, 2, \psi^{-1}), s}$ is an unramified section in $\mathrm{Ind}_{P_8(F)}^{\mathrm{Sp}_8(F)}(\mathcal{W}(\tau\otimes\chi_T, 2, \psi^{-1})\vert \det \vert^s )$.
Also, if we plug in $g=I_8$ in \eqref{eq-lemma-local-unramified-section-Sp8-property5}, 
we see that 
$
    f^*_{\mathcal{W}(\tau\otimes \chi_T, 2, \psi^{-1}), s}(I_8)=f^*_{\mathcal{W}(\tau \otimes\chi_{T}, 2, \psi_{2T}), s}(I_8).
$
We conjugate the matrix involving $T$ to the right, and do a change of variable in $z$, to get
\begin{equation}
\begin{split}
 \int\limits_{\mathrm{Mat}_2^0(F)}  
  f^*_{\mathcal{W}(\tau \otimes\chi_{T}, 2, \psi^{-1} ), s} \left(     
    \left(\begin{smallmatrix}
a &&&\\ &I_2 &&\\&&I_2&\\&&&a^*
\end{smallmatrix}\right)
  \left(\begin{smallmatrix}
I_2 &&&\\ &I_2 &&\\\frac{1}{2} J_2 {}^t T J_2 a&z&I_2&\\&\frac{1}{2} J_2 {}^t a J_2 T&&I_2
\end{smallmatrix}\right)
  \right)        
   \psi^{-1}(\mathrm{tr}(4zT^{-1})) dz.
    \label{eq-local-unramified-LHS-reduction2-inner}
\end{split}
\end{equation}

We claim that the integral in \eqref{eq-local-unramified-LHS-reduction2-inner} vanishes if $a\not\in \mathrm{Mat}_2(\mathcal{O}_F)$. To see this, we let $r\in \mathrm{Mat}_2(\mathcal{O}_F)$. Then $f^*_{\mathcal{W}(\tau \otimes\chi_{T}, 2, \psi^{-1}), s}=v(r,0,0)\cdot f^*_{\mathcal{W}(\tau \otimes\chi_{T}, 2, \psi^{-1}), s}$ since $f^*_{\mathcal{W}(\tau \otimes\chi_{T}, 2, \psi^{-1}), s}$ is unramified. Conjugating the element $v(r,0,0)$ to the left-hand side, we see that 
\begin{equation*}
\begin{split}
 &\int\limits_{\mathrm{Mat}_2^0(F)}  
  f^*_{\mathcal{W}(\tau \otimes\chi_{T}, 2, \psi^{-1}), s} \left(     
    \left(\begin{smallmatrix}
a &&&\\ &I_2 &&\\&&I_2&\\&&&a^*
\end{smallmatrix}\right)
  \left(\begin{smallmatrix}
I_2 &&&\\ &I_2 &&\\\frac{1}{2} J_2 {}^t T J_2 a&z&I_2&\\&\frac{1}{2} J_2 {}^t a J_2 T&&I_2
\end{smallmatrix}\right) v(r,0,0)
  \right)         
   \psi^{-1}(\mathrm{tr}(4zT^{-1})) dz \\
 =   &           \int\limits_{\mathrm{Mat}_2^0(F)}
  f^*_{\mathcal{W}(\tau \otimes\chi_{T}, 2, \psi^{-1}), s} \left(
v(ar, 0, 0)
    \left(\begin{smallmatrix}
a &&&\\ &I_2 &&\\&&I_2&\\&&&a^*
\end{smallmatrix}\right)
  \left(\begin{smallmatrix}
I_2 &&&\\ &I_2 &&\\\frac{1}{2} J_2 {}^t T J_2 a&z+\frac{1}{2} J_2 ({}^t r {}^t a J_2 T+{}^t T J_2 a r)&I_2&\\&\frac{1}{2} J_2 {}^t a J_2 T&&I_2
\end{smallmatrix}\right) 
   \right)    
   \psi^{-1}(\mathrm{tr}(4zT^{-1})) dz .
\end{split}
\end{equation*}
The element $v(ar, 0, 0)$ contributes $\psi^{-1}(\mathrm{tr}(ar))$, and the change of variable $z\mapsto z-\frac{1}{2} J_2 ({}^t r {}^t a J_2 T+{}^t T J_2 a r)$ contributes $\psi(\mathrm{tr}(4ar))$. Thus, we see that 
\begin{equation*}
\begin{split}
&\int\limits_{\mathrm{Mat}_2^0(F)}  
  f^*_{\mathcal{W}(\tau \otimes\chi_{T}, 2, \psi^{-1}), s} \left(     
    \left(\begin{smallmatrix}
a &&&\\ &I_2 &&\\&&I_2&\\&&&a^*
\end{smallmatrix}\right)
  \left(\begin{smallmatrix}
I_2 &&&\\ &I_2 &&\\\frac{1}{2} J_2 {}^t T J_2 a&z&I_2&\\&\frac{1}{2} J_2 {}^t a J_2 T&&I_2
\end{smallmatrix}\right) 
  \right)        
   \psi^{-1}(\mathrm{tr}(4zT^{-1})) dz \\
=&   \psi(\mathrm{tr}(3ar))     \int\limits_{\mathrm{Mat}_2^0(F)}  
  f^*_{\mathcal{W}(\tau \otimes\chi_{T}, 2, \psi^{-1}), s} \left(     
    \left(\begin{smallmatrix}
a &&&\\ &I_2 &&\\&&I_2&\\&&&a^*
\end{smallmatrix}\right)
  \left(\begin{smallmatrix}
I_2 &&&\\ &I_2 &&\\\frac{1}{2} J_2 {}^t T J_2 a&z&I_2&\\&\frac{1}{2} J_2 {}^t a J_2 T&&I_2
\end{smallmatrix}\right)  
  \right)           
   \psi^{-1}(\mathrm{tr}(4zT^{-1})) dz      
\end{split}
\end{equation*}
 for any $r\in \mathrm{Mat}_2(\mathcal{O}_F)$. Therefore, in order for the integral in \eqref{eq-local-unramified-LHS-reduction2-inner} to be non-vanishing, we must have
  $\psi(\mathrm{tr}(3ar))=1$ for all $r\in \mathrm{Mat}_2(\mathcal{O}_F)$, and consequently we must have $a\in \mathrm{Mat}_2(\mathcal{O}_F)$. 
 Therefore, for $\mathrm{Re}(s)$ sufficiently large, we have
 \begin{equation}
 \label{eq-local-unramified-LHS-reduction3}
\begin{split}
& \mathcal{Z}^*(l_T, s) =   \int\limits_{\mathrm{GL}_2(F)\cap \mathrm{Mat}_2(\mathcal{O}_F)}   l_T(\pi(\hat{a} )v_0)  \chi_T(\det(a)) \vert \det(a)\vert^{-4}  \int\limits_{\mathrm{Mat}_2^0(F)}    \\ 
&    f^*_{\mathcal{W}(\tau \otimes\chi_{T}, 2, \psi^{-1}), s} \left(     
  \left(\begin{smallmatrix}
I_2 &&&\\ &I_2 &&\\ &z&I_2&\\& &&I_2
\end{smallmatrix}\right)  
    \left(\begin{smallmatrix}
a &&&\\ &I_2 &&\\&&I_2&\\&&&a^*
\end{smallmatrix}\right)
  \right) 
    \psi^{-1}(\mathrm{tr}(4zT^{-1})) dz  da.
\end{split}
\end{equation}

For $g= \left(\begin{smallmatrix}
 g_1 & g_2 \\
 g_3 & g_4\end{smallmatrix}\right) \in \mathrm{Sp}_4(F)$ with $g_i\in \mathrm{Mat}_2(F)$, we define $\tilde{\lambda}(f^*_{\mathcal{W}(\tau \otimes\chi_{T}, 2, \psi^{-1}), s})(g)$ to be
\begin{equation*}
\begin{split}
\int\limits_{\mathrm{Mat}_2^0(F)}      f^*_{\mathcal{W}(\tau \otimes\chi_{T}, 2, \psi^{-1}), s} \left(     
  \left(\begin{smallmatrix}
I_2 &&&\\ &I_2 &&\\ &z&I_2&\\& &&I_2
\end{smallmatrix}\right)  
    \left(\begin{smallmatrix}
g_1 &&&g_2\\ &I_2 &&\\&&I_2&\\g_3&&&g_4
\end{smallmatrix}\right)
  \right)     \psi^{-1}(\mathrm{tr}(4zT^{-1})) dz.
\end{split}
\end{equation*}
The above integral is absolutely convergent for $\mathrm{Re}(s)\gg 0$. 
 We claim that $\tilde{\lambda}(f^*_{\mathcal{W}(\tau \otimes\chi_{T}, 2, \psi^{-1}), s})$ is an unramified section of the induced representation 
 $
     \mathrm{Ind}_{P_4(F)}^{\mathrm{Sp}_4(F)} ( \chi_1\chi_T(\det)\vert \det \vert^{s+2}).
$
The proof is similar to that of $\lambda(f^*_{\mathcal{W}(\tau, 4, \psi^{-1}), s})$.
 We consider a matrix of the form $g_0=\left(\begin{smallmatrix}
A & B \\
 & A^*\end{smallmatrix}\right)\in P_4(F)$, where $A\in \mathrm{GL}_2(F)$. First, by conjugating 
 the embedding of $\left(\begin{smallmatrix}
I_2 & B \\
 & I_2\end{smallmatrix}\right)\in N_4(F)$ to the left and using the left-invariance of $f^*_{\mathcal{W}(\tau \otimes\chi_{T}, 2, \psi^{-1}), s}$ under $N_8(F)$, we see 
that $\tilde{\lambda}(f^*_{\mathcal{W}(\tau \otimes\chi_{T}, 2, \psi^{-1}), s})$ is left invariant under $\left(\begin{smallmatrix}
I_2 & B \\
 & I_2\end{smallmatrix}\right)\in N_4(F)$.
Next, we plug in   $g_0=\left(\begin{smallmatrix}
A &  \\
 & A^*\end{smallmatrix}\right)$ and conjugate to the left, to get
 \begin{equation*}
 \begin{split}
 &\tilde{\lambda}(f^*_{\mathcal{W}(\tau \otimes\chi_{T}, 2, \psi^{-1}), s})\left( 
\left(\begin{smallmatrix}
A &  \\
 & A^*\end{smallmatrix}\right)
 g
\right) \\
= & \int\limits_{\mathrm{Mat}_2^0(F)}    f^*_{\mathcal{W}(\tau \otimes\chi_{T}, 2, \psi^{-1}), s} \left(     
   \left(\begin{smallmatrix}
A &&&\\ &I_2 &&\\&&I_2&\\ &&&A^*
\end{smallmatrix}\right)
  \left(\begin{smallmatrix}
I_2 &&&\\ &I_2 &&\\ &z&I_2&\\& &&I_2
\end{smallmatrix}\right)  
    \left(\begin{smallmatrix}
g_1 &&&g_2\\ &I_2 &&\\&&I_2&\\g_3&&&g_4
\end{smallmatrix}\right)
  \right)     \psi^{-1}(\mathrm{tr}(4zT^{-1})) dz \\
 =&   \int\limits_{\mathrm{Mat}_2^0(F)}    \vert \det(A)\vert^{s+\frac{5}{2}} \Delta(\tau\otimes\chi_T, 2)   \left(\begin{smallmatrix}
A &\\ &I_2 
\end{smallmatrix}\right) f^*_{\mathcal{W}(\tau \otimes\chi_{T}, 2, \psi^{-1}), s} \left(     
  \left(\begin{smallmatrix}
I_2 &&&\\ &I_2 &&\\ &z&I_2&\\& &&I_2
\end{smallmatrix}\right)  
    \left(\begin{smallmatrix}
g_1 &&&g_2\\ &I_2 &&\\&&I_2&\\g_3&&&g_4
\end{smallmatrix}\right)
  \right)     \psi^{-1}(\mathrm{tr}(4zT^{-1})) dz \\
  =&  \vert \det(A)\vert^{s+\frac{5}{2}}  \chi_1\chi_T(\det(A))|\det(A)|  \int\limits_{\mathrm{Mat}_2^0(F)}       f^*_{\mathcal{W}(\tau \otimes\chi_{T}, 2, \psi^{-1}), s} \left(     
  \left(\begin{smallmatrix}
I_2 &&&\\ &I_2 &&\\ &z&I_2&\\& &&I_2
\end{smallmatrix}\right)  
    \left(\begin{smallmatrix}
g_1 &&&g_2\\ &I_2 &&\\&&I_2&\\g_3&&&g_4
\end{smallmatrix}\right)
  \right)     \psi^{-1}(\mathrm{tr}(4zT^{-1})) dz.
   \end{split}
 \end{equation*}
 Here, the last equality follows from the action of the Speh representation $\Delta(\tau\otimes\chi_T, 2)$ (see \eqref{eq-local-Speh-representation-GL4}). Thus
 \begin{equation*}
 \tilde{\lambda}(f^*_{\mathcal{W}(\tau \otimes\chi_{T}, 2, \psi^{-1}), s})\in \mathrm{Ind}_{P_4(F)}^{\mathrm{Sp}_4(F)} ( \chi_1\chi_T(\det)\vert \det \vert^{s+2}).
  \end{equation*}
 Finally, since the section $f^*_{\mathcal{W}(\tau \otimes\chi_{T}, 2, \psi^{-1}), s}$ is unramified, we have $\tilde{\lambda}(f^*_{\mathcal{W}(\tau \otimes\chi_{T}, 2, \psi^{-1}), s})\left( 
 gk
\right)=\tilde{\lambda}(f^*_{\mathcal{W}(\tau \otimes\chi_{T}, 2, \psi^{-1}), s})\left( 
 g
\right)$
for any $k\in \mathrm{Sp}_4(\mathcal{O}_F)$. This proves the claim. 

Next, we claim that 
\begin{equation}
\label{eq-local-unramified-computation-lambdatilde-at-identity}
\tilde{\lambda}(f^*_{\mathcal{W}(\tau \otimes\chi_{T}, 2, \psi^{-1}), s})(I_4)=f^*_{\mathcal{W}(\tau \otimes\chi_{T}, 2, \psi^{-1}), s}(I_8).
\end{equation}
Here, $f^*_{\mathcal{W}(\tau \otimes\chi_{T}, 2, \psi^{-1}), s}$ is a function on $\mathrm{Sp}_8(F)$ while $\tilde{\lambda}(f^*_{\mathcal{W}(\tau \otimes\chi_{T}, 2, \psi^{-1}), s})$ is a function on $\mathrm{Sp}_4(F)$ defined through an integral involving $f^*_{\mathcal{W}(\tau \otimes\chi_{T}, 2, \psi^{-1}), s}$. Let $r\in \mathrm{Mat}_2(\mathcal{O}_F)$. Using the identity
\begin{equation*}
     \left(\begin{smallmatrix}
I_2 &&&\\ &I_2 &&\\ &z&I_2&\\& &&I_2
\end{smallmatrix}\right)  
\left(\begin{smallmatrix}
I_2 &&r&\\ &I_2 &&J_2{}^t r J_2\\ &&I_2&\\& &&I_2
\end{smallmatrix}\right) =
\left(\begin{smallmatrix}
I_2 &-rz&r&\\ &I_2 &&J_2{}^t r J_2\\ &&I_2&zJ_2{}^tr J_2\\& &&I_2
\end{smallmatrix}\right)\left(\begin{smallmatrix}
I_2 &&&\\ &I_2 &&\\ &z&I_2&\\& &&I_2
\end{smallmatrix}\right) 
\end{equation*}
and the right $\mathrm{Sp}_8(\mathcal{O}_F)$-invariance of $f^*_{\mathcal{W}(\tau \otimes\chi_{T}, 2, \psi^{-1}), s}$, we have
\begin{equation*}
    \begin{split}
        f^*_{\mathcal{W}(\tau \otimes\chi_{T}, 2, \psi^{-1}), s} \left(     
  \left(\begin{smallmatrix}
I_2 &&&\\ &I_2 &&\\ &z&I_2&\\& &&I_2
\end{smallmatrix}\right)  
  \right)  
                   &= f^*_{\mathcal{W}(\tau \otimes\chi_{T}, 2, \psi^{-1}), s} \left(    
  \left(\begin{smallmatrix}
I_2 &&&\\ &I_2 &&\\ &z&I_2&\\& &&I_2
\end{smallmatrix}\right)  
\left(\begin{smallmatrix}
I_2 &&r&\\ &I_2 &&J_2{}^t r J_2\\ &&I_2&\\& &&I_2
\end{smallmatrix}\right)
  \right)   \\
                &= \psi(\mathrm{tr}(rz)) f^*_{\mathcal{W}(\tau \otimes\chi_{T}, 2, \psi^{-1}), s} \left(   
      \left(\begin{smallmatrix}
I_2 &&&\\ &I_2 &&\\ &z&I_2&\\& &&I_2
\end{smallmatrix}\right) 
       \right) .
    \end{split}
\end{equation*}
This means that if $f^*_{\mathcal{W}(\tau \otimes\chi_{T}, 2, \psi^{-1}), s} \left( 
  \left(\begin{smallmatrix}
I_2 &&&\\ &I_2 &&\\ &z&I_2&\\& &&I_2
\end{smallmatrix}\right)  
  \right)  \not=0$, then $\psi(\mathrm{tr}(rz))=1$ for all $r\in \mathrm{Mat}_2(\mathcal{O}_F)$, and consequently we must have $z\in \mathrm{Mat}_2^0(\mathcal{O}_F)$. So the function
$
      z\mapsto f^*_{\mathcal{W}(\tau \otimes\chi_{T}, 2, \psi^{-1}), s} \left(     
  \left(\begin{smallmatrix}
I_2 &&&\\ &I_2 &&\\ &z&I_2&\\& &&I_2
\end{smallmatrix}\right)  
  \right)
$
 is supported in $\mathrm{Mat}_2^0(\mathcal{O}_F)$. Then \eqref{eq-local-unramified-computation-lambdatilde-at-identity} follows from the above computation.
Therefore,  for $\mathrm{Re}(s)$ sufficiently large, we have
 \begin{equation}
 \label{eq-local-unramified-LHS-reduction4}
\begin{split}
\mathcal{Z}^*(l_T, s) = \int\limits_{\mathrm{GL}_2(F)\cap \mathrm{Mat}_2(\mathcal{O}_F)}    l_T(\pi(\hat{a} )v_0)  \tilde{\lambda}(f^*_{\mathcal{W}(\tau \otimes\chi_{T}, 2, \psi^{-1}), s})
\left(\begin{smallmatrix}a& \\ & a^*\end{smallmatrix}\right) 
     \chi_T(\det(a)) \vert \det(a)\vert^{-4}   da,
\end{split}
\end{equation}
where 
$\tilde{\lambda}(f^*_{\mathcal{W}(\tau \otimes\chi_{T}, 2, \psi^{-1}), s})(g)$ is an unramified section of the induced representation \begin{equation*}
     \mathrm{Ind}_{P_4(F)}^{\mathrm{Sp}_4(F)} ( \chi_1\chi_T(\det)\vert \det \vert^{s+2})
 \end{equation*}
whose value at identity is $\tilde{\lambda}(f^*_{\mathcal{W}(\tau \otimes\chi_{T}, 2, \psi^{-1}), s})(I_{4})=f^*_{\mathcal{W}(\tau \otimes\chi_{T}, 2, \psi^{-1}), s} \left(I_8\right)$.

Examining the two unramified sections $\lambda(f^*_{\mathcal{W}(\tau, 4, \psi^{-1}), s})(g)$ and $\tilde{\lambda}(f^*_{\mathcal{W}(\tau \otimes\chi_{T}, 2, \psi^{-1}), s})(g)$, it follows from \eqref{eq-local-unramified-sections-equal-at-identity} that 
their values at identity agree. Then, for $a\in \mathrm{GL}_2(F)$, we have
\begin{equation*}
\begin{split}
     \frac{\tilde{\lambda}(f^*_{\mathcal{W}(\tau \otimes\chi_{T}, 2, \psi^{-1}), s})
\left(\begin{smallmatrix}a& \\ & a^*\end{smallmatrix}\right)}{\lambda(f^*_{\mathcal{W}(\tau, 4, \psi^{-1}), s})\left(\begin{smallmatrix}a& \\ & a^*\end{smallmatrix}\right)}  =
\frac{\vert \det(a)\vert^{s+\frac{7}{2}} \chi_1(\det(a)) \chi_T(\det(a)) }{\vert \det(a)\vert^{s+\frac{9}{2}} \chi_1(\det(a))}.
\end{split}
\end{equation*}
This means that 
$
    \tilde{\lambda}(f^*_{\mathcal{W}(\tau \otimes\chi_{T}, 2, \psi^{-1}), s})
\left(\begin{smallmatrix}a& \\ & a^*\end{smallmatrix}\right)=\lambda(f^*_{\mathcal{W}(\tau, 4, \psi^{-1}), s})\left(\begin{smallmatrix}a& \\ & a^*\end{smallmatrix}\right) \cdot \vert \det(a)\vert^{-1} \chi_T(\det(a)).
$
Since $\chi_T$ is quadratic, it follows from \eqref{eq-local-unramified-LHS-reduction4} that, for $\mathrm{Re}(s)$ sufficiently large,
\begin{equation*}
\begin{split}
 \mathcal{Z}^*(l_T, s) =                   \int\limits_{\mathrm{GL}_2(F)\cap \mathrm{Mat}_2(\mathcal{O}_F)}        l_T\left( \pi   (\hat{a})  v_0 \right)   
\lambda(f^*_{\mathcal{W}(\tau, 4, \psi^{-1}), s})\left( \begin{smallmatrix} a& \\ & a^*\end{smallmatrix}\right)  \vert \det(a)\vert^{-5}  
     da.
\end{split}
\end{equation*}
This is exactly \eqref{eq-Local-L-function-bridge-between-two-identities-2}. This completes the proof of Theorem~\ref{theorem-unramified-computation}.

\end{proof}

\section{Finite ramified and archimedean non-vanishing}\label{section-Ramified computation}

In this section we consider local integrals at finite ramified places and archimedean places.

The local integral at a finite ramified place can be controlled by the following result.

\begin{proposition} 
Let $F$ be non-archimedean, and let $K_0$ be an open compact subgroup of $\mathrm{Sp}_4(\mathcal{O}_{F})$. There exist a Schwartz function $\Phi_0\in \mathcal{S}(\mathrm{Mat}_2(F))$ and a section $f_{0,s} \in \mathrm{Ind}_{P_8(F)}^{\mathrm{Sp}_8(F)}(\mathcal{W}( \tau\otimes \chi_T, 2, \psi_{2T})\vert \det \vert^s)$  such that for any irreducible admissible representation $\pi_\nu$ of $\mathrm{Sp}_4(F)$, any vector $v_0 \in V_{\pi_\nu}$ which is stabilized by $K_0$, and any choice of the functional $l_T$ satisfying \eqref{Introduction-eq-l_T}, we have
\begin{equation}
\int\limits_{N_4(F)\backslash \mathrm{Sp}_4(F)}     \int\limits_{N_{2,8}^0(F)}   l_T(\pi_\nu(h)v_0)   \omega_{\psi,\nu}\left( \alpha_T(v )(1,h)\right) ) 
 \Phi_0(I_2 )   f_{0, s}(\gamma v (1, h))dv dh =
 l_T(v_0).
\label{eq-ramified-calculation-finiteplace}
\end{equation} 
The integral on the left-hand side of \eqref{eq-ramified-calculation-finiteplace} converges absolutely for all $s$.
\label{proposition-ramified-finiteplace}
\end{proposition}

\begin{proof} 
The proof is similar to \cite[Proposition 6.6]{GinzburgRallisSoudry1998}.

Let $\overline{N}_4(F)=\left\{ \overline{n}_4(w)=
\left( \begin{smallmatrix}
I_2 & \\ w & I_2
\end{smallmatrix}\right): w\in \mathrm{Mat}_2^0(F)
  \right\}$. Then
$N_4(F)M_4(F)\overline{N}_4(F)$ is an open dense subset of $\mathrm{Sp}_4(F)$ whose complement has Haar measure 0. Recall that
\begin{equation*}
\begin{split}
\gamma = \left(\begin{smallmatrix}
&I_2 & &\\&&&-I_2\\I_2&&&\\&&I_2&
\end{smallmatrix}\right).
\end{split}
\end{equation*}
Let
\begin{equation}
\label{eq-local-ramified-N_2,8,0}
   \overline{N}_{2,8}^0(F)=\gamma N_{2,8}^0(F) \gamma^{-1}= \left\{ 
 \overline{v}(x, z)=\left(\begin{smallmatrix} I_2 &&&\\ & I_2&&\\ x & z &I_2 & \\ &x^* &&I_2\end{smallmatrix} \right): z\in \mathrm{Mat}_2^0(F), x\in \mathrm{Mat}_2(F)
  \right\}.
\end{equation}
We write the left-hand side of \eqref{eq-ramified-calculation-finiteplace} as
\begin{equation}
\begin{split}
   &\int\limits_{\mathrm{GL}_2(F)} \int\limits_{\mathrm{Mat}_2^0(F)}    \int\limits_{\overline{N}_{2,8}^0(F)}  l_T\left(\pi\left( \left(\begin{smallmatrix} a & \\ & a^*\end{smallmatrix}\right)  \overline{n}_4(w)   \right)  v_0 \right)   \psi^{-1}(\mathrm{tr}(Tz)) \chi_T(\det(a)) \vert \det(a)\vert^{-2} \\
    &   \omega_{\psi}\left( (1,\overline{n}_4(w) \right) \Phi_0( a+xa)
f_{0, s}
 \left(  
 \left(\begin{smallmatrix} I_2 &&&\\ & I_2&&\\ x & z &I_2 & \\ &x^* &&I_2\end{smallmatrix} \right)    
  \left(\begin{smallmatrix} a &&&\\ & I_2&&\\  &  &I_2 & \\ & &&a^*\end{smallmatrix} \right)  
   \left(\begin{smallmatrix} I_2 &&&\\ & I_2&&\\  &  &I_2 & \\ w& &&I_2\end{smallmatrix} \right) \gamma
 \right)    d\overline{v} dw  da.
\label{eq-local-ramified-eq1}
\end{split}
\end{equation}
We conjugate $  \left(\begin{smallmatrix} a &&&\\ & I_2&&\\  &  &I_2 & \\ & &&a^*\end{smallmatrix} \right) $ to the left of $ \left(\begin{smallmatrix} I_2 &&&\\ & I_2&&\\ x & z &I_2 & \\ &x^* &&I_2\end{smallmatrix} \right) $, and make the change of variable $x\mapsto xa^{-1}$, to get
\begin{equation}
\begin{split}
    &\int\limits_{\mathrm{GL}_2(F)} \int\limits_{\mathrm{Mat}_2^0(F)}    \int\limits_{\overline{N}_{2,8}^0(F)}  l_T\left(\pi\left( \left(\begin{smallmatrix} a & \\ & a^*\end{smallmatrix}\right)  \overline{n}_4(w)  \right)  v_0 \right)   \psi^{-1}(\mathrm{tr}(Tz)) \chi_T(\det(a)) \vert \det(a)\vert^{-4} \\
 &   \omega_{\psi}\left( (1,\overline{n}_4(w) \right) \Phi_0( a+x)
f_{0, s}
 \left(  
   \left(\begin{smallmatrix} a &&&\\ & I_2&&\\  &  &I_2 & \\ & &&a^*\end{smallmatrix} \right) 
 \left(\begin{smallmatrix} I_2 &&&\\ & I_2&&\\ x & z &I_2 & \\ w&x^* &&I_2\end{smallmatrix} \right)    
  \gamma
 \right)    d\overline{v} dw  da .
 \end{split}
 \label{eq-local-ramified-eq2}
 \end{equation}
 We write the section $f_{0,s}$ in Jacquet's style notation, viewing it as a function $f_{0,s}(a;g)$ of two variables where $a\in \mathrm{GL}_4(F)$, $g\in \mathrm{Sp}_8(F)$, 
 satisfying
$$
f_{0,s}(a; \hat{m} ug)=   \vert\det(m)\vert^{s+\frac{5}{2}} f_{0,s}(am;g),
$$
where $\hat{m}=\mathrm{diag}(m, m^*)\in M_8(F), u\in N_8(F)$.
Then we get
 \begin{equation}
 \begin{split}
     &\int\limits_{\mathrm{GL}_2(F)} \int\limits_{\mathrm{Mat}_2^0(F)}    \int\limits_{\overline{N}_{2,8}^0(F)}  l_T\left(\pi\left( \left(\begin{smallmatrix} a & \\ & a^*\end{smallmatrix}\right)  \overline{n}_4(w) \right)  v_0 \right)   \psi^{-1}(\mathrm{tr}(Tz)) \chi_T(\det(a)) \vert \det(a)\vert^{s-\frac{3}{2}} \\
     &     \omega_{\psi}\left( (1,\overline{n}_4(w)\right) \Phi_0( a+x)
f_{0, s}
 \left(  
   \left(\begin{smallmatrix} a & \\ & I_2 \end{smallmatrix} \right) ;
 \left(\begin{smallmatrix} I_2 &&&\\ & I_2&&\\ x & z &I_2 & \\ w&x^* &&I_2\end{smallmatrix} \right)    
  \gamma
 \right)    d\overline{v} dw  da .  
  \label{eq-local-ramified-eq3}
\end{split}
\end{equation}
Similar to \cite[Proposition 6.6]{GinzburgRallisSoudry1998}, we choose the right $\gamma$-translate of $f_{0,s}$ to have support in $P_8(F)\cdot \mathcal{U}$, where  $\mathcal{U}\subset \mathrm{Sp}_8(F)$ is a  small neighborhood of identity, such that  $f_{0,s}(m; u \gamma)=W_{0}(m)$ for $u\in \mathcal{U}$, $m\in \mathrm{GL}_4(F)\cong M_8(F)$. Here $W_{0}$ is a function in the generalized Whittaker-Speh-Shalika model $\mathcal{W}(\tau\otimes \chi_T, 2, \psi_{2T})$, to be chosen later. We denote
\begin{equation*}
\begin{split}
\mathcal{V}_1  =\left\{ x:   \left(\begin{smallmatrix} I_2 &&&\\ & I_2&&\\ x &  &I_2 & \\ &x^* &&I_2\end{smallmatrix} \right)  \in \mathcal{U} \right\}, \mathcal{V}_2=\left\{ z:   \left(\begin{smallmatrix} I_2 &&&\\ & I_2&&\\ & z &I_2 & \\  &  &&I_2\end{smallmatrix} \right)  \in \mathcal{U} \right\},   \mathcal{V}_3  =\left\{ w:   \left(\begin{smallmatrix} I_2 &&&\\ & I_2&&\\   & &I_2 & \\ w& &&I_2\end{smallmatrix} \right)  \in \mathcal{U} \right\}.
\end{split}
\end{equation*}
With this choice of $f_{0,s}$, 
$\mathcal{V}_1$ must be a small neighborhood of zero in $\mathrm{Mat}_2(F)$, $\mathcal{V}_2$ and $\mathcal{V}_3$ must be small neighborhoods of zero in $\mathrm{Mat}^0_2(F)$.
We choose $\mathcal{U}$ so small that $\pi(\overline{n}_4(w))v_0=v_0$ and $\omega_{\psi}\left( (1,\overline{n}_4(w)\right) \Phi_0=\Phi_0$ for $w\in \mathcal{V}_3$, $\psi^{-1}(\mathrm{tr}(Tz))=1$ for $z\in \mathcal{V}_2$, and $
    \Phi_0(a+x)=\omega_{\psi}(\alpha_T(v(x, 0, 0) ))\Phi_0(a)=\Phi_0(a)
$ for $x\in \mathcal{V}_1$. Then  \eqref{eq-local-ramified-eq3} is equal to
 \begin{equation}
 \begin{split}
\mathrm{vol}(\mathcal{U}\cap \overline{N}_{8}(F))  \int\limits_{\mathrm{GL}_2(F)}   l_T\left(\pi \left(\begin{smallmatrix} a & \\ & a^*\end{smallmatrix}\right)    v_0 \right) \chi_T(\det(a)) \vert \det(a)\vert^{s-\frac{3}{2}} \Phi_0(a) W_0\left(\begin{smallmatrix} a & \\ & I_2\end{smallmatrix} \right) da.
 \end{split}
   \label{eq-local-ramified-eq4}
 \end{equation}
Note that $v_0$ is stabilized by a compact open subgroup $K_0\subset \mathrm{Sp}_4(F)$.  
We choose $M_1$ to be a small neighborhood of $I_2$ in $\mathrm{GL}_2(F)$ such that for any $a\in M_1$, $\left(\begin{smallmatrix} a & \\ & a^*\end{smallmatrix}\right)\in K_0$. We choose $M_2$ to be a small neighborhood of $I_2$ in $\mathrm{GL}_2(F)$ such that for any $a\in M_2$, $\left(\begin{smallmatrix} a & \\ & I_2\end{smallmatrix}\right)$ belongs to the stabilizer of $W_0$. We choose $M_3$ to be a small neighborhood of $I_2$ in $\mathrm{GL}_2(F)$ such that for any $a\in M_3$, $\det(a)$ belongs to the intersection of $\mathcal{O}_F^\times$ and the conductor of $\chi_T$. Let $M_0=M_1\cap M_2\cap M_3$, and let $\Phi_0$ be the characteristic function of $M_0$. Then \eqref{eq-local-ramified-eq4} is equal to
\begin{equation}
 \begin{split}
&\mathrm{vol}(\mathcal{U}\cap \overline{N}_{8}(F)) \int\limits_{M_0}   l_T\left(\pi \left(\begin{smallmatrix} a & \\ & a^*\end{smallmatrix}\right)    v_0 \right) \chi_T(\det(a)) \vert \det(a)\vert^{s-\frac{3}{2}} W_0\left(\begin{smallmatrix} a & \\ & I_2\end{smallmatrix} \right) da\\
=& l_T(v_0) \cdot \mathrm{vol}(\mathcal{U}\cap \overline{N}_{8}(F)) \mathrm{vol}(M_0) W_0(I_4).
 \end{split}
   \label{eq-local-ramified-eq5}
 \end{equation}
Setting $W_0(I_4)=\mathrm{vol}(M_0)^{-1} \mathrm{vol}(\mathcal{U} \cap \overline{N}_8(F))^{-1}$ gives the proposition.
\end{proof}

The archimedean integral is given by
\begin{equation*}
\begin{split}
\mathcal{Z}_\infty(\varphi,  \Phi_\infty, f_{\infty, s}) = \int\limits_{N_4(\mathbb{A}_\infty)\backslash \mathrm{Sp}_4(\mathbb{A}_\infty)} \int\limits_{N_{2,8}^0(\mathbb{A}_\infty)}  \varphi_{\psi, T}(h)   \omega_{\psi, \infty}\left( \alpha_T(v)(1,h)\right) )\Phi_\infty(I_2 )
f_{\infty, s}( \gamma v (1, h) )    dv   dh,
\end{split}
\end{equation*}
where $\Phi_\infty\in \mathcal{S}(\mathrm{Mat}_{2}({\mathbb A}_\infty))$, $f_{\infty, s}\in \mathrm{Ind}_{P_8({\mathbb A}_\infty)}^{\mathrm{Sp}_8({\mathbb A}_\infty)}(\mathcal{W}( \tau_\infty\otimes \chi_T, 2, \psi_{2T})\vert \det \vert^s)$.
Note that here we use the actual Fourier coefficient $\varphi_{\psi, T}$, instead of the linear functional $l_T$ as in the finite place. We have the following result.

\begin{proposition} 
Let $s_0\in \mathbb{C}$ be a complex number. There is a choice of data $(\varphi_j, \Phi_j, f_{j,s})$, such that the finite sum $\sum_j \mathcal{Z}_\infty(\varphi_j, \Phi_j, f_{j,s})$ admits meromorphic continuation to the whole complex plane and its meromorphic continuation is holomorphic and non-zero at $s_0$.
\label{proposition-ramified-archimedean}
\end{proposition}

\begin{proof} 
The proof is similar to \cite[Proposition 6.7]{GinzburgRallisSoudry1998}. Similar to \eqref{eq-local-ramified-eq2}, $\mathcal{Z}_\infty(\varphi, \Phi_\infty, f_{\infty, s})$ is equal to
\begin{equation*}
    \begin{split}
            & \int\limits_{\mathrm{GL}_2(\mathbb{A}_\infty)} \int\limits_{\mathrm{Mat}_2^0(\mathbb{A}_\infty)}    \int\limits_{\overline{N}_{2,8}^0(\mathbb{A}_\infty)} \varphi_{\psi, T}
            \left(  \left(\begin{smallmatrix} a & \\ & a^*\end{smallmatrix}\right)  \overline{n}_4(w)      \right)   \psi^{-1}(\mathrm{tr}(Tz)) \chi_T(\det(a)) \vert \det(a)\vert^{-4} \\
  &  \ \ \ \ \ \     
        \omega_{\psi, \infty}\left( (1,\overline{n}_4(w) \right) \Phi_\infty( a+x)
            f_{\infty, s}
             \left(  
             \left(\begin{smallmatrix} a &&&\\ & I_2&&\\  &  &I_2 & \\ & &&a^*\end{smallmatrix} 
             \right) 
         \left(\begin{smallmatrix} I_2 &&&\\ & I_2&&\\ x & z &I_2 & \\ w&x^* &&I_2\end{smallmatrix} \right)    \gamma  \right)    d\overline{v} dw  da .
    \end{split}
\end{equation*}
We choose $f_{\infty, s}$ such that the right $\gamma$-translate of $f_{\infty, s}$ has support in $P_8(\mathbb{A}_\infty)\cdot \overline{N}_8(\mathbb{A}_\infty)$, where
$$
\overline{N}_8(\mathbb{A}_\infty)=\left\{  \overline{u}(x, z, w)=\left(\begin{smallmatrix} I_2 &&&\\ & I_2&&\\ x & z &I_2 & \\ w&x^* &&I_2\end{smallmatrix} \right)      \in \mathrm{Sp}_8(\mathbb{A}_\infty)  \right\}.$$
Assume that 
$
\gamma\cdot f_{\infty, s} \left(  \left(\begin{smallmatrix} I_4 & * \\ & I_4\end{smallmatrix}\right) \left(\begin{smallmatrix} m &  \\ & m^*\end{smallmatrix}\right)  \overline{u}    \right)  =\vert \det(m)\vert^{s+\frac{5}{2}} \phi(\overline{u}) W_\infty (m),
$
where $\overline{u}\in \overline{N}_8(\mathbb{A}_\infty)$, $m\in \mathrm{GL}_4(\mathbb{A}_\infty)$, $W_\infty\in \mathcal{W}(\tau_\infty\otimes \chi_{T}, 2, \psi_{2T})$, and $\phi$ is a smooth function with compact support in $C_c^\infty(\overline{N}_8(\mathbb{A}_\infty))$ given by 
$
\phi(\overline{u}(x, z, w))=\phi_1(x) \phi_2(z)\phi_3(w)
$
where $\phi_1\in C_c^\infty(\mathrm{Mat}_2(\mathbb{A}_\infty)) $, $\phi_2\in C_c^\infty(\mathrm{Mat}_2^0(\mathbb{A}_\infty)) $, $\phi_3\in C_c^\infty(\mathrm{Mat}_2^0(\mathbb{A}_\infty)) $.
With  these choices,  $\mathcal{Z}_\infty(\varphi, \Phi_\infty, f_{\infty, s})$ is equal to
\begin{equation*}
\begin{split}
    &\int\limits_{\mathrm{GL}_2(\mathbb{A}_\infty)} \int\limits_{\mathrm{Mat}_2^0(\mathbb{A}_\infty)}    \int\limits_{\overline{N}_{2,8}^0(\mathbb{A}_\infty)}   \pi\left( \begin{smallmatrix} I_2  & \\ w & I_2\end{smallmatrix} \right) \varphi_{\psi, T}   \left(\begin{smallmatrix} a & \\ & a^*\end{smallmatrix}\right)      \psi^{-1}(\mathrm{tr}(Tz)) \chi_T(\det(a)) \vert \det(a)\vert^{s-\frac{3}{2}} \\
    & \ \ \ \ \ \ \ \ \ \            \omega_{\psi, \infty}\left( (1,\overline{n}_4(w) \right) \Phi_\infty( a+x)
W_{\infty}
   \left(\begin{smallmatrix} a  &\\ & I_2 \end{smallmatrix} \right) 
  \phi_1(x) \phi_2(z)\phi_3(w)  d\overline{u}  da .
 \end{split}
\end{equation*}
Here, the variable $\overline{u}=\overline{u}(x, z, w)$ depends on $x, z, w$, where $x, z$ come from coordinates of elements of $\overline{N}_{2,8}^0(\mathbb{A}_\infty)$ (see \eqref{eq-local-ramified-N_2,8,0}) and $w$ belongs to $\mathrm{Mat}_2^0(\mathbb{A}_\infty)$.
The $dz$-integration gives a constant $\int \phi_2(z)\psi^{-1}(\mathrm{tr}(Tz))dz$, and we may choose $\phi_2$ such that this constant is non-zero (see \cite[pp. 232]{GinzburgRallisSoudry1998}). Thus, up to this non-zero constant, $\mathcal{Z}_\infty(\varphi, \Phi_\infty, f_{\infty, s})$ is equal to 
\begin{equation}
\begin{split}
    &\int\limits_{\mathrm{GL}_2(\mathbb{A}_\infty)} \int\limits_{\mathrm{Mat}_2(\mathbb{A}_\infty)}\int\limits_{\mathrm{Mat}_2^0(\mathbb{A}_\infty)}    \pi\left( \begin{smallmatrix} I_2  & \\ w & I_2\end{smallmatrix} \right) \varphi_{\psi, T}   \left(\begin{smallmatrix} a & \\ & a^*\end{smallmatrix}\right)      \chi_T(\det(a)) \vert \det(a)\vert^{s-\frac{3}{2}} \\
\ \ \     & \ \ \ \       \omega_{\psi, \infty}\left( (1,\overline{n}_4(w) \right) \Phi_\infty( a+x)
W_{\infty}
   \left(\begin{smallmatrix} a  &\\ & I_2 \end{smallmatrix} \right) 
  \phi_1(x)  \phi_3(w)  dw dx  da .
 \end{split}
 \label{eq-local-archimedean-eq4}
\end{equation}
Next, we consider the $dw$-integration
\begin{equation}
\int\limits_{\mathrm{Mat}_2^0(\mathbb{A}_\infty)}   \phi_3(w)  \pi\left( \begin{smallmatrix} I_2  & \\ w & I_2\end{smallmatrix} \right)  \varphi_{\psi, T}  \otimes   \omega_{\psi, \infty}\left( (1,\overline{n}_4(w) \right) \Phi_\infty dw.
 \label{eq-local-archimedean-eq5}
\end{equation}
This is a convolution of $\phi_3$ against $\varphi_{\psi, T}\otimes \Phi_\infty \in \mathcal{W}(\pi, \psi, T)\hat{\otimes} \omega_\psi$. Here, $  \mathcal{W}(\pi, \psi, T)$ is the space spanned by the global Fourier coefficient of the form $\varphi_{\psi, T}$ and we regard it as the representation space  under the right translation action of $\overline{N}_4(\mathbb{A}_\infty)$. The symbol $\hat{\otimes}$ stands for the completed projective topological tensor product. By the Dixmier-Malliavin Theorem \cite{DixmierMalliavin1978}, a linear combination of integrals of the form \eqref{eq-local-archimedean-eq5} represents a general element in the space $\mathcal{W}(\pi, \psi, T)\hat{\otimes} \omega_\psi$. Thus, up to a suitable linear combination of integrals of the form  \eqref{eq-local-archimedean-eq4}, we get
\begin{equation}
\begin{split}
    \int\limits_{\mathrm{GL}_2(\mathbb{A}_\infty)} \int\limits_{\mathrm{Mat}_2(\mathbb{A}_\infty)} \varphi^\prime_{\psi, T}   \left(\begin{smallmatrix} a & \\ & a^*\end{smallmatrix}\right)      \chi_T(\det(a)) \vert \det(a)\vert^{s-\frac{3}{2}} \Phi_\infty^\prime ( a+x)
W_{\infty}
   \left(\begin{smallmatrix} a  &\\ & I_2 \end{smallmatrix} \right) 
  \phi_1(x)  dx  da .
 \end{split}
 \label{eq-local-archimedean-eq6}
\end{equation}
where $\varphi^\prime_{\psi, T} \in \mathcal{W}(\pi, \psi, T)$, $\Phi_\infty^\prime\in \mathcal{S}(\mathrm{Mat}_2(\mathbb{A})_\infty)$.
By the Dixmier-Malliavin Theorem \cite{DixmierMalliavin1978} again, up to a linear combination, the $dx$-integration
\begin{equation}
\int\limits_{\mathrm{Mat}_2(\mathbb{A}_\infty)}  \phi_1(x) \Phi_\infty^\prime(a+x)dx = \int\limits_{\mathrm{Mat}_2(\mathbb{A}_\infty)} \phi_1(x) \omega_\psi(\alpha_T(v(x,0,0)))\Phi_\infty^\prime(a) dx
\label{eq-local-archimedean-eq7}
\end{equation}
represents a general element of  $\mathcal{S}(\mathrm{Mat}_2(\mathbb{A}_\infty)) $. Thus, after a suitable linear combination, \eqref{eq-local-archimedean-eq6} becomes
\begin{equation}
\begin{split}
    \int\limits_{\mathrm{GL}_2(\mathbb{A}_\infty)}     \varphi^\prime_{\psi, T}   \left(\begin{smallmatrix} a & \\ & a^*\end{smallmatrix}\right)      \chi_T(\det(a)) \vert \det(a)\vert^{s-\frac{3}{2}} \Phi_\infty^{\prime \prime} ( a)
W_{\infty}
   \left(\begin{smallmatrix} a  &\\ & I_2 \end{smallmatrix} \right) 
   da 
 \end{split}
 \label{eq-local-archimedean-eq8}
\end{equation}
where $\Phi_\infty^{\prime\prime}\in \mathcal{S}(\mathrm{Mat}_2(\mathbb{A}_\infty))$.
Observe that the support of $a\mapsto W_{\infty}
   \left(\begin{smallmatrix} a  &\\ & I_2 \end{smallmatrix} \right) $ is contained in the support of a Schwartz function (see \cite[Lemma 3.13]{CaiFriedbergKaplan2024}). We may replace $W_{\infty}
   \left(\begin{smallmatrix} a  &\\ & I_2 \end{smallmatrix} \right)$ by a Schwartz function to arrive at
\begin{equation}
\begin{split}
    \int\limits_{\mathrm{GL}_2(\mathbb{A}_\infty)}     \varphi^\prime_{\psi, T}   \left(\begin{smallmatrix} a & \\ & a^*\end{smallmatrix}\right)      \chi_T(\det(a)) \vert \det(a)\vert^{s-\frac{3}{2}} \Phi_\infty^{\prime \prime \prime} ( a)
   da,
 \end{split}
 \label{eq-local-archimedean-eq9}
\end{equation}
where $\Phi_\infty^{\prime\prime\prime}$ is another Schwartz function on $\mathrm{GL}_2({\mathbb A}_\infty)$. 
Note that \eqref{eq-local-archimedean-eq9} is an archimedean integral considered in
\cite{RallisSoudry1989}. By 
\cite{RallisSoudry1989} we know that there is a choice of data so that \eqref{eq-local-archimedean-eq9} admits meromorphic continuation and \eqref{eq-local-archimedean-eq9} is non-vanishing at $s_0$. This proves the proposition.
\end{proof}

\section{Proofs of the main results}
\label{section-proofs-main-results}

\subsection{Proof of Theorem~\ref{thm-conjecture-holds}}
\label{subsection-proof-of-main-result}
Let $S$ be the following finite set of places: $\nu\not\in S$ if and only if the following conditions are satisfied:
\begin{itemize}
\item $\nu \not\vert  \, 2, 3, \infty$, 
\item $\pi_\nu$ and $\tau_\nu$ are unramified, 
\item the local component of $\psi$ at $\nu$ is unramified,
\item the diagonal coordinates of $T_0$ are in $\mathcal{O}_{F_\nu}^\times$, where $\mathcal{O}_{F_\nu}^\times$ is the group of units of the ring of integers of $F_\nu$.
\end{itemize}
Note that $S$ depends on $T$.

For any place $\nu\not\in S$, we fix in the space of $\mathrm{Ind}_{P_{16}(F_\nu)}^{\mathrm{Sp}_{16}(F_\nu)}(\mathcal{W}(\tau_\nu, 4, \psi^{-1})\vert \det \vert^s)$, the spherical vector $f_{\mathcal{W}(\tau_\nu, 4, \psi^{-1}), s}^0$, which is normalized, such that, for $a\in \mathrm{GL}_8(F_\nu)$, we have
$
    f_{\mathcal{W}(\tau, 4, \psi^{-1}), s}^0(a;I_{16})=W^0_{\Delta(\tau_\nu, 4)}(a),
$
where $W^0_{\Delta(\tau_\nu, 4)}$ is the unique unramified function in the unique model $\mathcal{W}(\tau_\nu, 4, \psi^{-1})$, such that its value at $I_8$ is 1. 
Recall that $f_{\mathcal{W}(\tau, 4, \psi^{-1}), s}^0(a;g)$ is a function of two variables where 
$a\in \mathrm{GL}_8(F_{\nu})$, $g\in \mathrm{Sp}_{16}(F_{\nu})$, 
 satisfying
$$
f_{\mathcal{W}(\tau, 4, \psi^{-1}), s}^0(a; \hat{m} ug)=   \vert\det(m)\vert^{s+\frac{9}{2}} f_{\mathcal{W}(\tau, 4, \psi^{-1}), s}^0(am;g),
$$
where $\hat{m}=\mathrm{diag}(m, m^*)\in M_{16}(F_{\nu}), u\in N_{16}(F_{\nu})$.
To ease notation, we also denote $f_{\mathcal{W}(\tau_\nu, 4, \psi^{-1}), s}^0(g)=f_{\mathcal{W}(\tau_\nu, 4, \psi^{-1}), s}^0(I_8;g)$ for $g\in \mathrm{Sp}_{16}(F_\nu)$.
Let us fix an isomorphism
$
    \Delta(\tau, 4)\cong \otimes_\nu^\prime \Delta(\tau_\nu, 4)
$
with respect to a system $\{\xi_\nu^0 \}_{\nu\not\in S}$ of spherical vectors. For $\nu\not\in S$, $\Delta(\tau_\nu, 4)$ is an unramified representation of $\mathrm{GL}_8(F_\nu)$, and the vector $\xi_\nu^0$ is a spherical vector in the space of $\Delta(\tau_\nu, 4)$. We further fix a non-zero functional $\Lambda_\nu\in \mathrm{Hom}_{U_{(4^2)}(F_\nu)}(\Delta(\tau_\nu, 4), \psi^{-1})$ such that $W^0_{\Delta(\tau_\nu, 4)}(I_8)=\Lambda_\nu(\xi_\nu^0)=1$. Recall that $U_{(4^2)}(F_\nu)=\left\{ \left(\begin{smallmatrix}I_4 & u_{1,2}\\ & I_4\end{smallmatrix}\right): u_{1,2}\in \mathrm{Mat}_4(F_\nu) \right\}$.

Recall that
\begin{equation*}
    d_{\tau_\nu}^{\mathrm{Sp}_{16}}(s)=L( s+\frac{5}{2}, \tau_\nu)     \prod_{1\le j \le 2} L(2s+2j, \chi_{\tau_\nu})L(2s+2j-1, \tau_\nu, \mathrm{Sym}^2)
\end{equation*}
and
\begin{equation*}
    d_{\tau_\nu\otimes \chi_T}^{\mathrm{Sp}_{8}}(s)=L(s+\frac{3}{2}, \tau_\nu\otimes\chi_T)L(2s+2, \chi_{\tau_\nu}) L(2s+1, \tau_\nu\otimes\chi_T, \mathrm{Sym}^2).
\end{equation*}
For $\nu\not\in S$, let
\begin{equation*}
    f_{\mathcal{W}(\tau_\nu, 4, \psi^{-1}), s}^*(g)= d_{\tau_\nu}^{\mathrm{Sp}_{16}}(s) f_{\mathcal{W}(\tau_\nu, 4, \psi^{-1}), s}^0(g).
\end{equation*}
For $\nu\not\in S$, we consider the induced representation $\mathrm{Ind}_{P_8(F_\nu)}^{\mathrm{Sp}_8(F_\nu)}(\mathcal{W}( \tau_\nu\otimes \chi_T, 2, \psi_{2T})\vert \det \vert^s)$, and let  $f^0_{\mathcal{W}(\tau_\nu\otimes \chi_T, 2, \psi_{2T}), s}$ be an unramified section in the space of $\mathrm{Ind}_{P_8(F_\nu)}^{\mathrm{Sp}_8(F_\nu)}(\mathcal{W}( \tau_\nu\otimes \chi_T, 2, \psi_{2T})\vert \det \vert^s)$, 
such
that, when viewed as a complex-valued function, its value at the identity
is equal to 
$
    \frac{d_{\tau_\nu}^{\mathrm{Sp}_{16}}(s)}{d_{\tau_\nu\otimes\chi_T}^{\mathrm{Sp}_{8}}(s)}.
$
This is a meromorphic function in $s$.
Denote, for $\nu\not\in S$, \begin{equation*}    f^*_{\mathcal{W}(\tau_\nu\otimes \chi_{T}, 2, \psi_{2T}), s}(g_\nu)= d_{\tau_\nu\otimes\chi_T}^{\mathrm{Sp}_8}(s) f^0_{\mathcal{W}(\tau_\nu\otimes \chi_{T}, 2, \psi_{2T}), s}(g_\nu),\end{equation*} and, for $\nu \in S$, \begin{equation*}    f^*_{\mathcal{W}(\tau_\nu\otimes \chi_{T}, 2, \psi_{2T}), s}(g_\nu)=  f_{\mathcal{W}(\tau_\nu\otimes \chi_{T}, 2, \psi_{2T}), s}(g_\nu).\end{equation*}

For  a finite set  $\Omega$  of places, we denote
\begin{equation}
\label{eq-zeta_omega}
\begin{split}
    \mathcal{Z}_\Omega(\varphi, &\Phi, f^*_{\mathcal{W}(\tau\otimes \chi_{T}, 2, \psi_{2T}), s})= 
     \int\limits_{ N_4(\mathbb{A}_\Omega)  \backslash  \mathrm{Sp}_4(\mathbb{A}_\Omega)}     \int\limits_{N_{2,8}^0(\mathbb{A}_\Omega)} \\
     &\varphi_{\psi, T}(h) \omega_{\psi, \Omega}\left( \alpha_T(v)(1,h)\right) )\Phi_\Omega(I_2 )     f^*_{\mathcal{W}(\tau_\Omega\otimes \chi_{T}, 2, \psi_{2T}), s}(\gamma v (1, h) )      dv dh .
\end{split}
\end{equation}
Here, for an algebraic group $H$,  we write $H({\mathbb A}_\Omega)=\prod_{\nu\in \Omega}H(F_\nu)$. Also, $\Phi_\Omega$ and $f^*_{\mathcal{W}(\tau_\Omega\otimes \chi_{T}, 2, \psi_{2T}), s}$ respectively denote the product of the factors of  $\Phi$ and $f^*_{\mathcal{W}(\tau\otimes \chi_T, 2, \psi_{2T}), s}$ at the places in $\Omega$.

We have the following result.

\begin{theorem}
Fix an isomorphism $\pi\cong \otimes_\nu^\prime \pi_\nu$. Assume $\varphi\in V_\pi$ corresponds to a pure tensor $\otimes_\nu \xi_\nu$ with $\xi_\nu\in V_{\pi_\nu}$.
For $\mathrm{Re}(s)\gg 0$, we have
\begin{equation*}
    \mathcal{Z}( \varphi,   \theta_{\psi}^\Phi   , E^{*, S}(\cdot, f_{\Delta(\tau\otimes \chi_T, 2), s})) =  L^S(s+\frac{1}{2}, \pi\times \tau)  \cdot \mathcal{Z}_S(\varphi, \Phi, f^*_{\mathcal{W}(\tau\otimes \chi_{T}, 2, \psi_{2T}), s}) .
\end{equation*}
\label{theorem-global-1}
\end{theorem}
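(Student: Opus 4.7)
The plan is to start from the unfolded form
\eqref{eq-thm-global-integral-unfolding-final} supplied by
Theorem~\ref{theorem-global-unfolding}, and to peel off one unramified
$L$-factor at a time via Theorem~\ref{theorem-unramified-computation}.
By hypothesis $\Phi=\prod_\nu \Phi_\nu$ is a pure tensor, the section
$f^*_{\mathcal{W}(\tau\otimes\chi_T,2,\psi_{2T}),s}$ is factorizable by
\cite{CaiFriedbergGinzburgKaplan2019,CaiFriedbergGourevitchKaplan2021}
and the Weil representation term $\omega_\psi(\alpha_T(v)(1,h))\Phi(I_2)$
factors as a product of local terms.  Hence the only
non-Eulerian piece of the integrand is the Fourier coefficient
$\varphi_{\psi,T}(h)$, whose space of linear models on $V_{\pi_\nu}$
is, at each place, generally infinite-dimensional.

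The point of the New Way method is that this lack of local uniqueness
does not obstruct Eulerian factorization.  Fix $\nu_0\notin S$ and
decompose $\mathrm{Sp}_4(\mathbb{A})=\mathrm{Sp}_4(F_{\nu_0})\times\mathrm{Sp}_4(\mathbb{A}^{\nu_0})$ and similarly
for $N_{2,8}^0$.  For each $h^{\nu_0}\in\mathrm{Sp}_4(\mathbb{A}^{\nu_0})$, define a
linear functional $\ell^{h^{\nu_0}}_T\colon V_{\pi_{\nu_0}}\to\mathbb{C}$ by
\[
\ell^{h^{\nu_0}}_T(\xi_{\nu_0})\;=\;\varphi_{\xi_{\nu_0}\otimes\xi^{\nu_0},\,\psi,T}(h^{\nu_0},1_{\nu_0}),
\]
where $\varphi_{\xi_{\nu_0}\otimes\xi^{\nu_0}}$ is the cusp form
corresponding to $\xi_{\nu_0}\otimes\xi^{\nu_0}$ under the fixed
factorization $\pi\cong\otimes'_\nu\pi_\nu$.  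A direct change of
variable inside the definition of the $T$-Fourier coefficient shows
that $\ell^{h^{\nu_0}}_T$ satisfies the equivariance
\eqref{eq-l_T} at the place $\nu_0$.  I then integrate in the
$v_{\nu_0}$ and $h_{\nu_0}$ variables first and apply
Theorem~\ref{theorem-unramified-computation} with
$l_T=\ell^{h^{\nu_0}}_T$ and $v_0=\xi^0_{\nu_0}$; the inner integral
collapses to
\[
L\!\left(s+\tfrac12,\pi_{\nu_0}\times\tau_{\nu_0}\right)
\cdot\ell^{h^{\nu_0}}_T(\xi^0_{\nu_0})
\;=\;L\!\left(s+\tfrac12,\pi_{\nu_0}\times\tau_{\nu_0}\right)
\cdot\varphi_{\psi,T}(h^{\nu_0},1_{\nu_0}),
\]
and the $L$-factor pulls out of the remaining integral. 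The crux is
that the right-hand side of the unramified identity is independent of
the choice of the functional $l_T$, so the identity can be applied even
though the local family $\{\ell^{h^{\nu_0}}_T\}_{h^{\nu_0}}$ need not
come from a one-dimensional model.  Iterating this place by place over
$\nu\notin S$ absorbs exactly the product $\prod_{\nu\notin S}
L(s+\tfrac12,\pi_\nu\times\tau_\nu)=L^S(s+\tfrac12,\pi\times\tau)$,
leaving the $S$-part $\mathcal{Z}_S(\varphi,\Phi,f^*_{\mathcal{W}(\tau\otimes\chi_T,2,\psi_{2T}),s})$.

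The main technical obstacle is to justify the interchange of
integrations and the simultaneous treatment of infinitely many
unramified places.  This rests on the absolute convergence of the
global integral in $\mathrm{Re}(s)\gg 0$ asserted in
Theorem~\ref{theorem-global-unfolding}, together with the fact that
at each place $\nu\notin S$ the data $(\Phi_\nu,f^*_{\nu},\xi^0_\nu)$
are spherical, so each local integral converges, equals an unramified
quantity, and only finitely many modifications occur relative to the
spherical baseline.  A standard limiting/truncation argument in the
spirit of \cite{Piatetski-ShapiroRallis1988} then legitimizes the
iteration and yields the desired identity.  Apart from this, the only
point that merits verification is the $T$-equivariance of
$\ell^{h^{\nu_0}}_T$, which is immediate from the definition of
$\varphi_{\psi,T}$.
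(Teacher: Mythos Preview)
Your proposal is correct and follows precisely the New Way induction argument that the paper invokes by citing \cite[pp.~117--118]{Piatetski-ShapiroRallis1988}, \cite[Theorem~1.2]{BumpFurusawaGinzburg1995}, and \cite[Corollary~3.4]{PollackShah2017}; the paper itself gives no further details beyond these references. Your write-up simply makes explicit the standard place-by-place extraction of unramified $L$-factors via the moving family of functionals $\ell_T^{h^{\nu_0}}$, which is exactly the content of those cited proofs.
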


\begin{proof}
The proof is similar to \cite[pp. 117-118]{Piatetski-ShapiroRallis1988}, \cite[Theorem 1.2]{BumpFurusawaGinzburg1995} and \cite[Corollary 3.4]{PollackShah2017}. We prove some details for completeness.
Let $\Omega$ be a finite set of places containing $S$. By the definition of the adelic integral we have
\begin{equation*}
    \mathcal{Z}( \varphi,   \theta_{\psi}^\Phi   , E^{*, S}(\cdot, f_{\Delta(\tau\otimes \chi_T, 2), s}))= \varinjlim_{\Omega\supseteq S} \mathcal{Z}_\Omega(\varphi, \Phi, f^*_{\mathcal{W}(\tau\otimes \chi_{T}, 2, \psi_{2T}), s}).
\end{equation*}
If $\nu$ is a place not in $\Omega$, then 
\begin{multline}
     \mathcal{Z}_{\Omega\cup\{\nu\}}(\varphi, \Phi, f^*_{\mathcal{W}(\tau\otimes \chi_{T}, 2, \psi_{2T}), s})\\
 =  \int\limits_{ N_4(\mathbb{A}_{\Omega\cup\{\nu\}})  \backslash  \mathrm{Sp}_4(\mathbb{A}_{\Omega\cup\{\nu\}})}     \int\limits_{N_{2,8}^0(\mathbb{A}_{\Omega\cup\{\nu\}})}      \varphi_{\psi, T}(h) \omega_{\psi, {\Omega\cup\{\nu\}}}\left( \alpha_T(v)(1,h)\right) )  \Phi_{\Omega\cup\{\nu\}}(I_2 )  \\
     f^*_{\mathcal{W}(\tau_{\Omega\cup\{\nu\}}\otimes \chi_{T}, 2, \psi_{2T}), s}(\gamma v (1, h) )      dv dh \\
 =    \int\limits_{ N_4(\mathbb{A}_\Omega)  \backslash  \mathrm{Sp}_4(\mathbb{A}_\Omega)}     \int\limits_{N_{2,8}^0(\mathbb{A}_\Omega)}       \omega_{\psi, \Omega}\left( \alpha_T(v)(1,h)\right) )\Phi_\Omega(I_2 )     f^*_{\mathcal{W}(\tau_\Omega\otimes \chi_{T}, 2, \psi_{2T}), s}(\gamma v (1, h) )      dv  \\
 \times    \int\limits_{ N_4(F_\nu)  \backslash  \mathrm{Sp}_4(F_\nu)}  \int\limits_{N_{2,8}^0(F_\nu)}   \varphi_{\psi, T}(h_\nu h) \omega_{\psi, \nu}\left( \alpha_T(v_\nu)(1,h_\nu)\right) )\Phi_\nu(I_2 )    \\
  f^*_{\mathcal{W}(\tau_\nu\otimes \chi_{T}, 2, \psi_{2T}), s}(\gamma v_\nu (1, h_\nu) ) dv_\nu dh_\nu   dh
\label{eq-Main-Theorems-Basic-Lemma-argument1}
\end{multline}
since $\omega_\psi$, $\Phi$, and $ f^*_{\mathcal{W}(\tau \otimes \chi_T, 2, \psi_{2T}), s}$ are factorizable. 

Now we fix $h\in \mathrm{Sp}_4(\mathbb{A}_\Omega)$. Since $\pi_{\nu}$ is unramified (recall $\nu$ is not in $S$), there is a non-zero unramified vector $v_0$ in $V_{\pi_\nu}$. Let $i_\nu:V_{\pi_\nu}\to V_\pi$ be a
$\mathrm{Sp}_4(F_\nu)$-equivariant map that sends $v_0$ to the cusp form $\varphi$. For example, we may take
\begin{equation*}
    i_\nu(v_\nu)=\bigotimes_{w\not=\nu}\xi_w\otimes v_\nu.
\end{equation*}
Consider the linear functional $l_{T}:V_{\pi_\nu}\to \mathbb{C}$ given by
\begin{equation*}
    l_{T}(v_\nu)=   \int\limits_{\mathrm{Mat}_2^0(F)\backslash \mathrm{Mat}_2^0(\mathbb{A})} (\pi(h)i_\nu(v_\nu) )\left(\begin{smallmatrix}
I_2 & z\\
& I_2
\end{smallmatrix} \right) \psi(\mathrm{tr}(Tz))dz.
\end{equation*}
Note that $l_{T}(v_0)=\varphi_{\psi, T}(h)$. Since $h_\nu\in \mathrm{Sp}_4(F_\nu)$ commutes with $h\in \mathrm{Sp}_4(\mathbb{A}_\Omega)$,
$l_{T}$ satisfies \eqref{eq-l_T}. Moreover, we have
\begin{equation*}
    \varphi_{\psi, T}(h_\nu h)=l_{T}(\pi_\nu(h_\nu)v_0).
\end{equation*}
Now we use \eqref{eq-Main-Theorems-Basic-Lemma-argument1} and Theorem~\ref{theorem-unramified-computation} to obtain
\begin{equation*}
\begin{split}
    & \mathcal{Z}_{\Omega\cup\{\nu\}}(\varphi, \Phi, f^*_{\mathcal{W}(\tau\otimes \chi_{T}, 2, \psi_{2T}), s})\\
=&   \int\limits_{ N_4(\mathbb{A}_\Omega)  \backslash  \mathrm{Sp}_4(\mathbb{A}_\Omega)}     \int\limits_{N_{2,8}^0(\mathbb{A}_\Omega)}       \omega_{\psi, \Omega}\left( \alpha_T(v)(1,h)\right) )\Phi_\Omega(I_2 )     f^*_{\mathcal{W}(\tau_\Omega\otimes \chi_{T}, 2, \psi_{2T}), s}(\gamma v (1, h) )      dv  \\
&\hspace{0.4in} \times    \int\limits_{ N_4(F_\nu)  \backslash  \mathrm{Sp}_4(F_\nu)}  \int\limits_{N_{2,8}^0(F_\nu)}  l_{T}(\pi_\nu(h_\nu) v_0 ) \omega_{\psi, \nu}\left( \alpha_T(v_\nu)(1,h_\nu)\right) )\Phi_\nu(I_2 )    \\ 
&\hspace{1.8in}  f^*_{\mathcal{W}(\tau_\nu\otimes \chi_{T}, 2, \psi_{2T}), s}(\gamma v_\nu (1, h_\nu) ) dv_\nu dh_\nu dh \\
=&   \int\limits_{ N_4(\mathbb{A}_\Omega)  \backslash  \mathrm{Sp}_4(\mathbb{A}_\Omega)}     \int\limits_{N_{2,8}^0(\mathbb{A}_\Omega)}       \omega_{\psi, \Omega}\left( \alpha_T(v)(1,h)\right) )\Phi_\Omega(I_2 )     f^*_{\mathcal{W}(\tau_\Omega\otimes \chi_{T}, 2, \psi_{2T}), s}(\gamma v (1, h) )      dv \\
& \hspace{3in}   L(s+\frac{1}{2},  {\pi_\nu}\times {\tau_\nu}) \cdot  \varphi_{\psi, T}(h) dh \\
=&  L(s+\frac{1}{2},  {\pi_\nu}\times {\tau_\nu})  \mathcal{Z}_\Omega(\varphi, \Phi, f^*_{\mathcal{W}(\tau\otimes \chi_T, 2, \psi_{2T}), s}).
\end{split}
\end{equation*}
Finally, we take the limit to obtain the result. 
\end{proof}

We now restate and prove Theorem~\ref{thm-conjecture-holds}. 
\begin{theorem}
Given an irreducible automorphic cuspidal representation $\pi$ of $\mathrm{Sp}_4({\mathbb A})$, 
an irreducible unitary automorphic cuspidal representation $\tau$ of $\mathrm{GL}_2({\mathbb A})$, 
and a non-zero cusp form $\varphi\in V_\pi$ which corresponds to $\otimes_\nu \xi_\nu$ under the factorization $\pi\cong \otimes_\nu^\prime \pi_\nu$, there is a choice of a matrix $T=J_2 T_0\in \mathrm{GL}_2(F)$ where $T_0$ is symmetric, a section $f_{\Delta(\tau\otimes \chi_T, 2), s}\in \mathrm{Ind}_{P_8(\mathbb{A})}^{\mathrm{Sp}_8(\mathbb{A})}(\Delta(\tau\otimes \chi_T, 2)\vert \det \cdot \vert^s)$ and some $\Phi=\prod_{\nu}\Phi_\nu \in \mathcal{S}(\mathrm{Mat}_2({\mathbb A}))$ so that
\begin{equation*}
\mathcal{Z}( \varphi,   \theta_{\psi}^\Phi   , E^{*, S}(\cdot, f_{\Delta(\tau\otimes \chi_T, 2), s})) = 
L^S(s+\frac{1}{2}, \pi\times \tau)  \cdot \mathcal{Z}_S(\varphi, \Phi, f^*_{\mathcal{W}(\tau\otimes \chi_{T}, 2, \psi_{2T}), s}) ,
\end{equation*}
where 
$\mathcal{Z}_S(\varphi, \Phi, f^*_{\mathcal{W}(\tau\otimes \chi_{T}, 2, \psi_{2T}), s})$ 
is a meromorphic function in $s$.
Moreover, for any $s_0\in \mathbb{C}$, the data can be chosen so that $\mathcal{Z}_S(\varphi, \Phi, f^*_{\mathcal{W}(\tau\otimes \chi_{T}, 2, \psi_{2T}), s})$ is holomorphic and
non-zero in a neighborhood of $s=s_0$. 
\label{thm-conjecture-holds-restate}
\end{theorem}

\begin{proof}
This follows from Theorem~\ref{theorem-global-1}, Proposition~\ref{proposition-ramified-finiteplace} and Proposition~\ref{proposition-ramified-archimedean}.
Indeed, we choose $T$ such that $\varphi_{\psi, T}\not=0$. 
Let $S$ be the finite set of places as in Subsection~\ref{subsection-proof-of-main-result}. Let $\Phi=\prod_{\nu}\Phi_\nu$, $f_{\Delta(\tau\otimes \chi_T, 2), s} = \prod_{\nu} f_{\Delta(\tau_\nu\otimes \chi_T, 2), s}$ be factorizable. 
By Theorem~\ref{theorem-global-1}, we have
\begin{equation*}
    \mathcal{Z}( \varphi,   \theta_{\psi}^\Phi   , E^{*, S}(\cdot, f_{\Delta(\tau\otimes \chi_T, 2), s})) =  L^S(s+\frac{1}{2}, \pi\times \tau)  \cdot \mathcal{Z}_S(\varphi, \Phi, f^*_{\mathcal{W}(\tau\otimes \chi_{T}, 2, \psi_{2T}), s}) .
\end{equation*}
By Proposition~\ref{proposition-ramified-finiteplace} and Proposition~\ref{proposition-ramified-archimedean}, we can choose data so that the local integral at a finite place $\nu\in S$ is equal to $l_T(v_0)$, where $v_0$ is a vector stabilized by an open compact subgroup of $\mathrm{Sp}_4(\mathcal{O}_{F_\nu})$,
and the local integral at an archimedean place $\nu\in S$ admits meromorphic continuation to the whole complex plane which is holomorphic and non-zero at $s_0$. Hence $\mathcal{Z}_S(\varphi, \Phi, f^*_{\mathcal{W}(\tau\otimes \chi_{T}, 2, \psi_{2T}), s})$ admits meromorphic continuation to the whole complex plane which is holomorphic and can be made non-zero at $s_0$. This proves Theorem~\ref{thm-conjecture-holds-restate}.
\end{proof}

\subsection{Proofs of Theorem~\ref{thm-nonvanishing-of-period} and Theorem~\ref{thm-holomorphy-of-L(s,pixtau)}}
\label{subsection-applications}
In this subsection we prove Theorem~\ref{thm-nonvanishing-of-period} and Theorem~\ref{thm-holomorphy-of-L(s,pixtau)}.

We recall the following result of Jiang, Liu and Zhang on the possible location of the poles of the fully normalized Eisenstein series $E^*(\cdot, f_{\Delta(\tau\otimes \chi_T, 2), s})$, defined in \eqref{eq-normalized-Eisenstein}.

\begin{theorem}(Jiang-Liu-Zhang, \cite[Theorem 5.2]{JiangLiuZhang2013})
\label{theorem-JiangLiuZhang2013-Poles}
Let $\tau$ be an irreducible unitary automorphic cuspidal self-dual representation of ${\mathrm{GL}}_2({\mathbb{A}})$.
Then the normalized Eisenstein series $E^*(\cdot, f_{\Delta(\tau\otimes \chi_T, 2), s})$ at worst has simple poles at $s=-1$, $s=-\frac{1}{2}$, $s=\frac{1}{2}$, and $s=1$.
More specifically, for $\mathrm{Re}(s)\ge 0$, we have the following.\\
\begin{enumerate}
    \item[(i)] If the central character $\chi_\tau$ of $\tau$ is trivial, and $L(\frac{1}{2}, \tau\otimes\chi_T)\not=0$, then  $E^*(\cdot, f_{\Delta(\tau\otimes \chi_T, 2), s})$ has a simple pole at $1$.\\
    \item[(ii)] If the central character  $\chi_\tau$ of $\tau$ is trivial, and $L(\frac{1}{2}, \tau\otimes\chi_T)=0$, then   $E^*(\cdot, f_{\Delta(\tau\otimes \chi_T, 2), s})$ is holomorphic.\\
    \item[(iii)] If $L(s, \tau\otimes \chi_T, \mathrm{Sym}^2)$ has a pole at $s=1$, then
     $E^*(\cdot, f_{\Delta(\tau\otimes \chi_T, 2), s})$ has a simple pole at $\frac{1}{2}$.
\end{enumerate}
\end{theorem}

Now we prove Theorem~\ref{thm-nonvanishing-of-period} and Theorem~\ref{thm-holomorphy-of-L(s,pixtau)}.

\begin{proof}[Proof of Theorem~\ref{thm-nonvanishing-of-period}]
(i) Let $\mathcal{Z}( \varphi,   \theta_{\psi}^\Phi   , E^*(\cdot, f_{\Delta(\tau\otimes \chi_T, 2), s}))$ be the integral \eqref{eq-global-integral-0} with the Eisenstein series $E^{*,S}(\cdot, f_{\Delta(\tau\otimes \chi_T, 2), s})$ replaced by fully normalized Eisenstein series $E^*(\cdot, f_{\Delta(\tau\otimes \chi_T, 2), s})$.
By Theorem~\ref{thm-conjecture-holds}, we obtain that
\begin{equation}
\label{eq-Applications-integral-representation-formula}
\mathcal{Z}( \varphi,   \theta_{\psi}^\Phi   , E^*(\cdot, f_{\Delta(\tau\otimes \chi_T, 2), s}))= 
L^S(s+\frac{1}{2}, \pi\times \tau) \cdot \mathcal{Z}_S(\varphi, \Phi, f_{\mathcal{W}(\tau\otimes \chi_{T}, 2, \psi_{2T}), s}) \cdot  \prod_{\nu\in S} d_{\tau_\nu\otimes\chi_T}^{\mathrm{Sp}_8}(s)   ,
\end{equation}
and $\mathcal{Z}_S(\varphi, \Phi, f_{\mathcal{W}(\tau\otimes \chi_{T}, 2, \psi_{2T}), s}$ can be made non-vanishing for any $s$. 

Now we show that $\prod_{\nu\in S} d_{\tau_\nu\otimes\chi_T}^{\mathrm{Sp}_8}(s)$ is non-vanishing for any $s$. Recall that
\begin{equation*}
    d_{\tau_\nu\otimes\chi_T}^{{\mathrm{Sp}}_8}(s)=L (s+\frac{3}{2}, \tau_\nu\otimes\chi_T)L (2s+2, \chi_{\tau_\nu}) L (2s+1, \tau_\nu\otimes\chi_T, \mathrm{Sym}^2).
\end{equation*}
We divide into two cases: $\nu\in S$ is a finite place (Case I), and $\nu\in S$ is archimedean (Case II).

Case I: $\nu\in S$ is a finite place. The local $L$-function $L(s, \tau_\nu\otimes\chi_T)$ is defined as $L(s, \tau_\nu\otimes\chi_T)=\frac{1}{P_1(q^{-s})}$ for certain polynomial $P_1(X)\in \mathbb{C}[X]$ with $P_1(0)=1$, so $L (s+\frac{3}{2}, \tau_\nu\otimes\chi_T)$ is non-vanishing for any $s$.
Similarly, $L (2s+2, \chi_{\tau_\nu})$ and $L (2s+1, \tau_\nu\otimes\chi_T, \mathrm{Sym}^2)$ are non-vanishing for any $s$. We conclude that, when $\nu\in S$ is finite, we have $d_{\tau_\nu\otimes\chi_T}^{{\mathrm{Sp}}_8}(s)\not=0$ for any $s$.

Case II: $\nu\in S$ is an archimedean place. In this case, all the $L$-functions $L (s, \tau_\nu\otimes\chi_T)$, $L (s, \tau_\nu\otimes\chi_T, \mathrm{Ext}^2)$, $L (s, \tau_\nu\otimes\chi_T, \mathrm{Sym}^2)$ are given by a product of Gamma functions $\Gamma(s)$ up to a scalar (see, for example, \cite{Knapp1994motives}). Since $\Gamma(s)$ is never zero, it follows that $d_{\tau_\nu\otimes\chi_T}^{{\mathrm{Sp}}_8}(s)\not=0$ for any $s$. We conclude that $\prod_{\nu\in S} d_{\tau_\nu\otimes\chi_T}^{\mathrm{Sp}_8}(s)$ is non-vanishing for any $s$

Next  we use \eqref{eq-Applications-integral-representation-formula} with $s=1$, and choose data so that $\mathcal{Z}_S(\varphi, \Phi, f_{\mathcal{W}(\tau\otimes\chi_T, 2, \psi_{2T}, s})$ does not vanish at $s=1$. By the above analysis on the non-vanishing of the local $L$-functions, we see that $\prod_{\nu\in S} d_{\tau_\nu\otimes\chi_T}^{\mathrm{Sp}_8}(1)\not=0$.
Since
$
    \mathrm{Res}_{s=\frac{3}{2}} L^S(s, \pi\times \tau)\not=0,
$
it follows from \eqref{eq-Applications-integral-representation-formula} that $\mathcal{Z}( \varphi,   \theta_{\psi}^\Phi   , E^*(\cdot, f_{\Delta(\tau\otimes \chi_T, 2), s}))$ must have a simple pole at $s=1$. This means that 
there exists a cusp form $\varphi\in V_\pi$, a Schwartz function $\Phi\in \mathcal{S}(\mathrm{Mat}_2({\mathbb{A}}))$, and a residue  $R\in \mathcal{R}(1,\Delta(\tau\otimes \chi, 2))$, such that the period integral
\begin{equation*}
\int\limits_{\mathrm{Sp}_4(F)\backslash \mathrm{Sp}_4(\mathbb{A})} \int\limits_{N_{2, 8}(F)\backslash N_{2, 8}(\mathbb{A})}   \varphi(h)     \theta_{\psi}^\Phi (\alpha_T(v)(1, h)) R(v(1,h)) dvdh \not=0.
\end{equation*}

(ii) The proof is similar to the proof of (i). We use \eqref{eq-Applications-integral-representation-formula} with $s=\frac{1}{2}$, and choose data so that $\mathcal{Z}_S(\varphi, \Phi, f_{\mathcal{W}(\tau\otimes\chi_T, 2, \psi_{2T}, s})$ does not vanish at $s=\frac{1}{2}$. Since 
$
    \mathrm{Res}_{s=1} L^S(s, \pi\times \tau)\not=0,
$
we deduce that $\mathcal{Z}( \varphi,   \theta_{\psi}^\Phi   , E^*(\cdot, f_{\Delta(\tau\otimes \chi_T, 2), s}))$ must have a simple pole at $s=\frac{1}{2}$. This means that 
there exists a cusp form $\varphi\in V_\pi$, a Schwartz function $\Phi\in \mathcal{S}(\mathrm{Mat}_2({\mathbb{A}}))$, and a residue  $R\in \mathcal{R}(\frac{1}{2},\Delta(\tau\otimes \chi, 2))$, such that the period integral
\begin{equation*}
\int\limits_{\mathrm{Sp}_4(F)\backslash \mathrm{Sp}_4(\mathbb{A})} \int\limits_{N_{2, 8}(F)\backslash N_{2, 8}(\mathbb{A})}   \varphi(h)     \theta_{\psi}^\Phi (\alpha_T(v)(1, h)) R(v(1,h)) dvdh \not=0.
\end{equation*}
\end{proof}

\begin{proof}[Proof of Theorem~\ref{thm-holomorphy-of-L(s,pixtau)}]
To prove Theorem~\ref{thm-holomorphy-of-L(s,pixtau)}, we use \eqref{eq-Applications-integral-representation-formula} again. Since $\tau$ is self-dual, and by assumption $L(\frac{1}{2}, \tau\otimes\chi_T)=0$ and the central character $\chi_\tau$ of $\tau$ is trivial, we conclude that $E^*(\cdot, f_{\Delta(\tau\otimes \chi_T, 2), s})$ is holomorphic by Theorem~\ref{theorem-JiangLiuZhang2013-Poles}. Thus $\mathcal{Z}( \varphi,   \theta_{\psi}^\Phi   , E^*(\cdot, f_{\Delta(\tau\otimes \chi_T, 2), s}))$ is holomorphic. Since $\mathcal{Z}_S(\varphi, \Phi, f_{\mathcal{W}(\tau\otimes \chi_{T}, 2, \psi_{2T}), s}$ can be made non-vanishing for any $s$, and $\prod_{\nu\in S} d_{\tau_\nu\otimes\chi_T}^{\mathrm{Sp}_8}(s)$ is non-vanishing for any $s$, it follows from \eqref{eq-Applications-integral-representation-formula} that $L^S(s+\frac{1}{2}, \pi\times \tau)$ has no poles. Thus $L^S(s, \pi\times \tau)$ is holomorphic.
\end{proof}

\section*{Acknowledgements}
This work is my doctoral dissertation at the Ohio State University. I am very grateful to my advisor Jim Cogdell for providing a tremendous amount of advice, support and encouragement. I would like to thank Yuanqing Cai for helpful discussions, and to Yubo Jin for pointing out an inaccuracy in an earlier version. Finally I would like to thank the anonymous referee for a thorough review of the paper, and for many helpful comments and suggestions that improved the paper.

\bibliographystyle{alpha}
\bibliography{References}

\end{document}